\renewcommand{\eqref}[1]{\hyperref[#1]{(\ref{#1})}}
\newlist{enumlist}{enumerate}{2}
\setlist[enumlist,1]{labelindent=0cm,label=\arabic*.,ref=\arabic*,labelwidth=2.5ex,labelsep=0.5ex,leftmargin=3ex,align=left,topsep=0.5ex,itemsep=1ex,parsep=1ex}
\setlist[enumlist,2]{labelindent=0cm,label=\theenumlisti.\arabic*.,ref=\arabic*,labelwidth=5ex,labelsep=0.5ex,leftmargin=5.5ex,align=left,topsep=0.5ex,itemsep=1ex,parsep=1ex}
\newlist{itemlist}{itemize}{1}
\setlist[itemlist]{labelindent=0cm,label=$\bullet$,labelwidth=2.5ex,labelsep=0.5ex,leftmargin=3ex,align=left,topsep=0.5ex,itemsep=1ex,parsep=1ex}
\numberwithin{equation}{section}
\theoremstyle{definition}\newtheorem{definition}{Definition}[section]
\newtheorem*{definition*}{Definition}
\newtheorem{remark}[definition]{Remark}
\newtheorem*{example*}{Example}
\newtheorem*{examples*}{Examples}}
\newtheorem{proposition}[definition]{Proposition}
\newtheorem{lemma}[definition]{Lemma}
\newtheorem{theorem}[definition]{Theorem}
\newtheorem{corollary}[definition]{Corollary}
\newtheorem{letterthm}{Theorem}
\newtheorem{letterprop}[letterthm]{Proposition}
\theoremstyle{definition}}
\renewcommand{\Re}{\operatorname{Re}}
\newcommand{\C}{\mathbb{C}}
\newcommand{\al}{\alpha}
\newcommand{\be}{\beta}
\newcommand{\ot}{\otimes}
\newcommand{\Z}{\mathbb{Z}}
\newcommand{\vphi}{\varphi}
\newcommand{\id}{\mathord{\text{\rm id}}}
\newcommand{\om}{\omega}
\newcommand{\N}{\mathbb{N}}
\newcommand{\ovt}{\mathbin{\overline{\otimes}}}
\newcommand{\Om}{\Omega}
\newcommand{\si}{\sigma}
\newcommand{\R}{\mathbb{R}}
\newcommand{\F}{\mathbb{F}}
\newcommand{\cH}{\mathcal{H}}
\newcommand{\cJ}{\mathcal{J}}
\newcommand{\cF}{\mathcal{F}}
\newcommand{\T}{\mathbb{T}}
\newcommand{\actson}{\curvearrowright}
\newcommand{\cB}{\mathcal{B}}
\newcommand{\cW}{\mathcal{W}}
\newcommand{\cU}{\mathcal{U}}
\newcommand{\Ker}{\operatorname{Ker}}
\newcommand{\cV}{\mathcal{V}}
\newcommand{\cE}{\mathcal{E}}
\newcommand{\Aut}{\operatorname{Aut}}
\newcommand{\cS}{\mathcal{S}}
\newcommand{\Prob}{\operatorname{Prob}}
\newcommand{\mutil}{\widetilde{\mu}}
\newcommand{\dpr}{^{\prime\prime}}
\newcommand{\nutil}{\widetilde{\nu}}
\newcommand{\Var}{\operatorname{Var}}
\newcommand{\Kh}{\widehat{K}}
\newcommand{\etatil}{\widetilde{\eta}}
\newcommand{\muh}{\widehat{\mu}}
\newcommand{\etah}{\widehat{\eta}}
\newcommand{\thetah}{\widehat{\theta}}
\newcommand{\alh}{\widehat{\alpha}}
\newcommand{\betah}{\widehat{\beta}}
\newcommand{\gammah}{\widehat{\gamma}}
\newcommand{\Lambdah}{\widehat{\Lambda}}
\newcommand{\Q}{\mathbb{Q}}
\newcommand{\cT}{\mathcal{T}}
\newcommand{\Omegatil}{\widetilde{\Omega}}
\newcommand{\Atil}{\widetilde{A}}
\newcommand{\dTV}{d_{\operatorname{TV}}}
\newcommand{\Lh}{\widehat{L}}
\DeclareMathOperator*{\bigconv}{{\text{\Large $\ast$}}}
\begin{document}
\begin{center}
{\boldmath\LARGE\bf Bernoulli actions of type III$_0$ with\vspace{0.5ex}\\ prescribed associated flow}

\bigskip

{\sc by Tey Berendschot\footnote{\noindent KU~Leuven, Department of Mathematics, Leuven (Belgium).\\ E-mails: tey.berendschot@kuleuven.be and stefaan.vaes@kuleuven.be.}\textsuperscript{,}\footnote{\noindent T.B.\ and S.V.\ are supported by long term structural funding~-- Methusalem grant of the Flemish Government.} and Stefaan Vaes\textsuperscript{1,2,}\footnote{S.V.\ is supported by FWO research project G090420N of the Research Foundation Flanders.}}
\end{center}

\vspace{0.5ex}

\begin{abstract}\noindent
We prove that many, but not all injective factors arise as crossed products by nonsingular Bernoulli actions of the group $\mathbb{Z}$. We obtain this result by proving a completely general result on the ergodicity, type and Krieger's associated flow for Bernoulli shifts with arbitrary base spaces. We prove that the associated flow must satisfy a structural property of infinite divisibility. Conversely, we prove that all almost periodic flows, as well as many other ergodic flows, do arise as associated flow of a weakly mixing Bernoulli action of any infinite amenable group. As a byproduct, we prove that all injective factors with almost periodic flow of weights are infinite tensor products of $2 \times 2$ matrices. Finally, we construct Poisson suspension actions with prescribed associated flow for any locally compact second countable group that does not have property~(T).
\end{abstract}

\vspace{0.5ex}

\section{Introduction}\label{sec:introduction}

To a countable infinite group $G$ and a standard measure space $(X_0,\mu_0)$, called the \emph{base space}, one associates the \emph{Bernoulli action} $G\actson (X,\mu_0^{G})=\prod_{g\in G}(X_0,\mu_0)$ given by translating the coordinates by left multiplication. Bernoulli actions are at the heart of many classical, as well as recent theorems in ergodic theory and operator algebras. Especially the role of Bernoulli actions in the theory of von Neumann algebras has been very prominent, see \cite{Pop03, Pop06, CI09, Ioa10, PV21}.

By construction $\mu_0^G$ is a probability measure and it is preserved by the Bernoulli action of $G$. We rather equip $X = X_0^G$ with a product of possibly distinct probability measures $\mu_g$ on $X_0$ and thus consider the Bernoulli action
\begin{align}\label{eq:Bernoulli action}
G\actson (X,\mu)=\prod_{g\in G}(X_0,\mu_g) : (g^{-1} \cdot x)_h = x_{gh} \; .
\end{align}
We require that the action \eqref{eq:Bernoulli action} is \emph{nonsingular}, i.e.\ preserves sets of measure zero. By Kakutani's criterion for the equivalence of product measures, this is equivalent to all the measures $(\mu_g)_{g\in G}$ being equivalent and
\begin{align}\label{eq:Kakutani criterion}
\sum_{h\in G} H^2(\mu_{gh},\mu_h) <+\infty \;\;\text{for every $g\in G$,}
\end{align}
where $H(\mu,\nu)$ denotes the Hellinger distance, see \eqref{eq.hellinger}.

The key question that we address is the following: given a countable infinite group $G$, what are the possible Krieger types of nonsingular Bernoulli actions $G \actson (X,\mu)$~? This question is particularly interesting in the classical case $G = \Z$.

Recall that an essentially free ergodic nonsingular action $G \actson (X,\mu)$ is said to be of type II$_1$ if it admits an equivalent $G$-invariant probability measure, of type II$_\infty$ if it admits an equivalent $G$-invariant infinite measure and of type III otherwise. Moreover, type III actions are further classified by \emph{Krieger's associated flow} \cite{Kri76}, an ergodic nonsingular action of $\R$ that is also equal to the Connes-Takesaki \emph{flow of weights} \cite{CT77} of the crossed product von Neumann algebra $L^\infty(X) \rtimes G$. If the associated flow is trivial, the action is of type III$_1$. If it is periodic with period $|\log \lambda|$ and $\lambda \in (0,1)$, the action is of type III$_\lambda$. If the associated flow is properly ergodic, the action is of type III$_0$ and we are particularly interested to understand which associated flows may arise from nonsingular Bernoulli actions.

The first example of an ergodic Bernoulli action of type III was given by Hamachi for the group of integers \cite{Ham81}. Much later in \cite{Kos09}, Kosloff could give an example of a nonsingular Bernoulli action of $\Z$ that is of type III$_{1}$.

In the past few years, the study of nonsingular Bernoulli actions has gained momentum. The first systematic results for nonsingular Bernoulli actions of nonamenable groups $G$ were obtained in \cite{VW17}. In \cite{BKV19}, very complete results on the ergodicity and type of nonsingular Bernoulli actions with base space $X_0=\{0,1\}$ were obtained, building on important earlier work in \cite{Kos12,DL16,Kos18,Dan18}. In particular, it was shown in \cite{VW17} that the free groups $\F_n$, $n \geq 2$, admit Bernoulli actions of type III$_\lambda$ for all $\lambda \in (0,1]$. In \cite{BKV19}, it was proven that \emph{locally finite} groups admit Bernoulli actions of all possible types: II$_{1}$, II$_{\infty}$ and III$_{\lambda}$, for $\lambda\in [0,1]$. In \cite{BV20}, we proved that the same holds for all infinite amenable groups, if we allow the base space $X_0$ to be infinite. The latter is a necessary assumption, since it was proven in \cite{BKV19} that Bernoulli actions of $\Z$ with finite base space are never of type II$_\infty$. In \cite{KS20}, it was proven independently that infinite amenable groups admit Bernoulli actions of type III$_\lambda$ for all $\lambda \in (0,1]$.

Ergodic, essentially free, nonsingular actions $G \actson (X,\mu)$ of amenable groups are completely classified, both up to orbit equivalence and up to isomorphism of their crossed product von Neumann algebras, by their type and associated flow, see \cite{Kri76,Con76,CT77,CFW81,Haa85}. It is thus a very natural question to ask which ergodic flows arise as the associated flow of a nonsingular Bernoulli action, in particular of the group $\Z$. Put in an equivalent form, the question is which factors arise as crossed products $L^\infty(X) \rtimes \Z$ by nonsingular Bernoulli shifts.

We prove in this paper the surprising result that not all injective factors can arise in this way. We also prove that many injective type III$_0$ factors do arise. In particular, we prove that all \emph{infinite tensor products of factors of type I$_2$} (i.e.\ $2 \times 2$ matrices), the so called ITPFI$_2$ factors, arise as crossed products of nonsingular Bernoulli shifts.

In this paper, we call a \emph{flow} any nonsingular action of $\R$. We introduce below (see Definition \ref{def.infinitely-divisible}) the concept of an \emph{infinitely divisible} flow. By the classification of injective factors, the flow of weights of an injective factor $M$ is infinitely divisible if and only if for every integer $n \geq 1$, there exists an injective factor $N$ such that $M \cong N^{\ovt n}$. By \cite[Theorem 2.1]{GSW84}, not every injective factor, and not even every ITPFI factor, is a tensor square. So not all ergodic flows are infinitely divisible.

Our first main result says that the associated flow of a nonsingular Bernoulli shift $\Z \actson (X,\mu)$ must be infinitely divisible. We prove this result in complete generality, without making any other assumptions on the nature of the base space $X_0$ or the probability measures $\mu_n$, apart from the shift being nonsingular.

We thus also need a completely general result on the ergodicity of nonsingular Bernoulli shifts. Ruling out the trivial cases where $\mu$ admits an atom or where the action $\Z \actson (X,\mu)$ admits a fundamental domain (i.e.\ is dissipative), we prove the following result. We actually provide in Theorem \ref{thm.main-structure-variant} below a more precise description, also saying exactly what happens in the trivial cases with an atom or a fundamental domain.

\begin{letterthm}\label{thm.main-structure}
Let $\Z \actson (X,\mu) = \prod_{n \in \Z} (X_0,\mu_n)$ be a nonsingular Bernoulli shift such that $(X,\mu)$ is nonatomic and $\Z \actson (X,\mu)$ is not dissipative.

There exists an essentially unique Borel set $C_0 \subset X_0$ such that $C_0^\Z \subset X$ has positive measure and the following holds.
\begin{itemlist}
\item The nonsingular Bernoulli shift $\Z \actson C_0^\Z$ is weakly mixing and its associated flow is infinitely divisible.
\item The action $\Z \actson X \setminus C_0^\Z$ is dissipative.
\end{itemlist}
\end{letterthm}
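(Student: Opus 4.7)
The plan is to first isolate a suitable subset $C_0\subset X_0$, and then separately establish three claims: dissipativity of $X\setminus C_0^\Z$, weak mixing of the restricted shift on $C_0^\Z$, and infinite divisibility of its associated flow. The first three items are refinements of methods from \cite{BKV19,BV20,KS20}; the infinite divisibility is the novel and difficult point.

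To define $C_0$, I would take it to be a maximal Borel subset of $X_0$, up to null sets, subject to both $\sum_{n\in\Z}\mu_n(X_0\setminus C_0)<+\infty$ and $\sum_{n\in\Z}H^2(\mu_n',\mu_0')<+\infty$, where $\mu_n'=\mu_n(C_0)^{-1}\mu_n|_{C_0}$. By Kakutani's criterion, these are exactly the requirements that $C_0^\Z$ have positive $\mu$-measure and that the restricted shift $\Z\actson(C_0^\Z,\bigotimes_n\mu_n')$ be nonsingular. Essential uniqueness follows from maximality, together with the usual lattice property for families of sets satisfying Kakutani summability.

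For dissipativity of $X\setminus C_0^\Z$, my approach is to construct an $L^1$-witness $f\geq 0$, supported on the complement, whose orbital sum $\sum_k f\circ\sigma^k$ is finite almost everywhere on this complement; such a witness can be built from indicator functions of cylinders forcing some coordinate to leave $C_0$, with the required Borel-Cantelli summability coming precisely from the condition defining $C_0$. For weak mixing on $C_0^\Z$, conservativity is given (non-dissipativity together with the previous step), and ergodicity follows from the Kolmogorov $0$-$1$ law for the tail of the independent factors. Weak mixing reduces to ergodicity of the diagonal action on $C_0^\Z\times C_0^\Z$, which is itself a nonsingular Bernoulli shift with base $C_0\times C_0$ and measures $\mu_n'\otimes\mu_n'$, and which still satisfies the hypothesis of the theorem thanks to $H^2(\mu_n'\otimes\mu_n',\mu_0'\otimes\mu_0')\leq 2H^2(\mu_n',\mu_0')$.

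The main obstacle is the infinite divisibility of the associated flow. For each integer $k\geq 1$, the associated flow of $\Z\actson C_0^\Z$ must be presented in the $k$-divisible form specified by Definition~\ref{def.infinitely-divisible}. The natural decomposition uses $\Z=\bigsqcup_{i=0}^{k-1}(i+k\Z)$: this identifies $L^\infty(C_0^\Z)$ with the tensor product $\bigotimes_{i=0}^{k-1} L^\infty\bigl(\prod_{n\in i+k\Z}C_0\bigr)$, and realises each factor as a nonsingular Bernoulli shift for $\sigma^k$ with base $C_0$ and measures $(\mu_{i+kn}')_{n\in\Z}$, while $\sigma$ itself cyclically permutes the $k$ factors. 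The remaining task is to show that this tensor presentation descends through the Maharam extension to give a genuine $k$-fold decomposition of the flow of weights. Carrying this out rigorously, using the explicit cocycle $\omega(m,x)=\sum_n\log\frac{d\mu_{n-m}}{d\mu_n}(x_n)$ and tracking the interplay between the Connes-Takesaki flow of weights and the cyclic permutation induced by $\sigma$, is where the real work lies; it is at this step that both the classification of injective factors and the specific product structure of Bernoulli shifts must be used in an essential way.
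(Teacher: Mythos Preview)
Your proposal has a genuine gap at the ergodicity step, and this gap propagates into the infinite divisibility argument.

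You write that ``ergodicity follows from the Kolmogorov $0$--$1$ law for the tail of the independent factors.'' This is not correct for nonsingular Bernoulli shifts: $\Z$-invariant sets are not tail events, and there is no direct $0$--$1$ law available. In the i.i.d.\ case one passes through the Hewitt--Savage $0$--$1$ law, using that the shift lies in the closure of the finite permutation group $\cS$; for non-identical marginals this is exactly the nontrivial content of Lemma~\ref{lem:Z is in S}, whose proof requires the cocycle growth estimate from \cite{VW17} together with the non-dissipativity hypothesis. Likewise, your reduction of weak mixing to ``the diagonal action on $C_0^\Z\times C_0^\Z$ still satisfies the hypothesis of the theorem'' is circular: you have not shown the diagonal action is non-dissipative, and you cannot bootstrap ergodicity without already knowing it for the single factor. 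The paper handles all of this simultaneously via Lemma~\ref{lem:S is in Z}, which gives $L^\infty(C\times\R\times Y)^\Z = L^\infty(C\times\R)^\cS\ovt 1$ for any ergodic pmp $\Z\actson Y$.

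Your construction of $C_0$ also diverges from the paper and has its own issues. The condition $\sum_n H^2(\mu_n',\mu_0')<+\infty$ is not the nonsingularity condition for the restricted shift (it would force equivalence to $(\mu_0')^\Z$, hence type II$_1$); nonsingularity is automatic once $C_0^\Z$ has positive measure. More importantly, the paper does not define $C_0$ by maximality at all: it first proves $L^\infty(C)^\Z$ is \emph{discrete} (using the $p=2$ decomposition and the identity $L^\infty(C)^{p\Z}=L^\infty(C)^\Z$ from Lemma~\ref{lem:S is in Z}), takes a minimal projection $\cU$, and then shows $\cU=C_0^\Z$ via a conditional-expectation argument exploiting the product decompositions for all $p$.

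Finally, your sketch of infinite divisibility identifies the right decomposition $\Z=\bigsqcup_i(i+k\Z)$, but the sentence ``this tensor presentation descends through the Maharam extension'' hides the entire difficulty. What must be shown is that $L^\infty(X\times\R)^\Z$ equals $L^\infty(Z_{p,0}\times\cdots\times Z_{p,p-1}\times\R)^{\Gamma_{p,0}\times\cdots\times\Gamma_{p,p-1}}$, where the $\Gamma_{p,i}$ are \emph{independent} copies of $p\Z$ acting on each factor. The diagonal $p\Z$ inside this product is what you get from the shift, so the chain of equalities requires both Lemma~\ref{lem:Z is in S} (to pass from $\cS_{p,i}$ to $\Gamma_{p,i}$) and Lemma~\ref{lem:S is in Z} (to pass from $\Z$ to $\cS$ and from $p\Z$ back to $\Z$). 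The permutation group $\cS$ is the missing ingredient throughout your proposal.
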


Note that it was proven in \cite[Theorem A]{BKV19} that a Bernoulli shift of $\Z$ with base space $\{0,1\}$ is either weakly mixing, dissipative or atomic. This is compatible with Theorem \ref{thm.main-structure} because a two point base space is the only case in which a subset $C_0 \subset X_0$ is either empty, a single point or everything. Theorem~\ref{thm.main-structure} says in particular that for every conservative nonsingular Bernoulli shift $\Z \actson (X,\mu)$, the associated flow is infinitely divisible. The crossed product $L^\infty(X) \rtimes \Z$ associated with a Bernoulli shift can only be a factor if $\Z \actson (X,\mu)$ is conservative and ergodic.
As mentioned above, by \cite[Theorem 2.1]{GSW84}, it thus follows that not every injective factor, and not even every ITPFI factor, is of the form $L^\infty(X) \rtimes \Z$ where $\Z \actson (X,\mu)$ is a nonsingular Bernoulli shift.

Complementing Theorem \ref{thm.main-structure}, we determine in Theorem \ref{thm:type classification Z} in equally complete generality the type of an arbitrary nonsingular Bernoulli shift $\Z \actson (X,\mu)$.

In the converse direction, we prove that many ergodic flows do arise as associated flows of nonsingular Bernoulli actions. By \cite{CW88}, the possible flows of weights of ITPFI factors are precisely the \emph{tail boundary flows}, i.e.\ the actions of $\R$ on the Poisson boundary of a time dependent random walk on $\R$ given by a (nonconstant) sequence of transition probability measures $\mu_n$ on $\R$. If the transition probabilities $\mu_n$ can be chosen to be compound Poisson distributions, we call the tail boundary flow a \emph{Poisson flow\footnote{Sometimes, the term Poisson flow is used as a synonym for Poisson process. In this paper, a flow is always a nonsingular action of $\R$ so that no confusion should arise.}}, see Definition \ref{def.poisson-flow}.

We prove in Theorems \ref{thm.almost-periodic-Poisson} and \ref{thm.Poisson-vs-ITPFI-2} that the class of Poisson flows is large: it includes all almost periodic flows (i.e.\ flows with pure point spectrum) and it includes the flow of weights of any ITPFI$_2$ factor. By definition, Poisson flows are infinitely divisible and therefore, not every ergodic flow is a Poisson flow. In Section \ref{sec.poisson-flow}, we obtain several results on the class of Poisson flows. They can be equivalently characterized as the tail boundary flows with transition probabilities $\mu_n$ supported on two points and having uniformly bounded variance (see Proposition \ref{prop.characterize-Poisson-flow-two-point-measures}). Also, the flows of weights of ITPFI$_2$ factors can be precisely characterized as the Poisson flows of \emph{positive type} (see Definition \ref{def.poisson-flow} and Theorem \ref{thm.Poisson-vs-ITPFI-2}).

We then prove that all these Poisson flows arise as the associated flow of a weakly mixing nonsingular Bernoulli action of any amenable group. As a corollary, it thus follows that every ITPFI$_2$ factor is of the form $L^\infty(X) \rtimes \Z$ for a nonsingular, weakly mixing Bernoulli shift $\Z \actson (X,\mu)$.

\begin{letterthm}\label{thm.main-example}
Let $G$ be any countable infinite amenable group and let $\R \actson (Z,\eta)$ be any Poisson flow. There exists a family of equivalent probability measures $(\mu_g)_{g \in G}$ on a countable infinite base space $X_0$ such that the Bernoulli action $G \actson (X,\mu) = \prod_{g \in G} (X_0,\mu_g)$ is nonsingular, weakly mixing and has associated flow $\R \actson Z$.
\end{letterthm}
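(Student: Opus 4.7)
The plan is to realise the prescribed Poisson flow as the tail boundary of a compound Poisson random walk, and then engineer the Bernoulli measures $(\mu_g)_{g\in G}$ so that the $\R$-valued log Radon-Nikodym cocycle of the shift, along the orbit of a typical point, behaves asymptotically like this random walk. By Definition \ref{def.poisson-flow}, $\R\actson(Z,\eta)$ is the tail boundary flow of a time dependent random walk on $\R$ whose transition probabilities are compound Poisson with parameter pairs $(\al_n,\nu_n)$, where $\nu_n$ is a probability measure on $\R$ and $\al_n>0$ is an intensity.

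I would first treat the model case $G=\Z$. Choose a countably infinite base space $X_0$ and, for each $n\in\Z$, a probability measure $\mu_n$ on $X_0$ so that the single coordinate log likelihood ratio $\log(d\mu_{n+1}/d\mu_n)$, regarded as an $\R$-valued random variable on $(X_0,\mu_n)$, is of compound Poisson type with parameters close to a small rescaling of $(\al_n,\nu_n)$. This can be arranged by letting $X_0$ enumerate potential jump positions and moving a small amount of mass between them as $n$ is incremented, so that $\mu_{n+1}$ and $\mu_n$ coincide except on a small set of atoms where the ratios are the prescribed jumps. Because the shift cocycle is a sum of these independent variables indexed by the coordinates, its Maharam extension reproduces exactly a compound Poisson random walk with the prescribed transitions. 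To pass from $\Z$ to an arbitrary countable infinite amenable $G$, I would invoke an Ornstein-Weiss quasi-tiling of $G$ built from a Følner sequence and place scaled copies of the $\Z$-building-block on each tile, in the spirit of the amenable constructions of \cite{BV20,KS20}.

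Three verifications remain. (a) Nonsingularity is equivalent to the Kakutani summability \eqref{eq:Kakutani criterion}, which is enforced by taking the intensities $\al_n$ small enough that the relevant Hellinger distances are summable. (b) Weak mixing: by Theorem \ref{thm.main-structure} it suffices to arrange that the distinguished set $C_0$ is essentially all of $X_0$ and to rule out nontrivial finite dimensional subrepresentations, the latter following from asymptotic orthogonality of $\mu_g$ and $\mu_h$ as $gh^{-1}$ leaves any finite set. (c) Identification of the associated flow with $\R\actson(Z,\eta)$. By construction the Maharam extension factors onto a compound Poisson random walk with the correct transitions and hence onto its tail boundary $\R\actson(Z,\eta)$.

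The main obstacle is the reverse direction of (c): showing that the associated flow is not a strictly coarser quotient of $\R\actson(Z,\eta)$. To obtain equality one must build a ``mixing'' ingredient into the base space that is sufficiently random to absorb any additional tail invariants coming from the coordinatewise Bernoulli structure, while simultaneously leaving the compound Poisson log likelihood ratios intact. Reconciling this mixing requirement with the amenable quasi-tiling and with the Kakutani nonsingularity bound is the delicate technical point, and it is where the freedom to use a countably infinite base space $X_0$ is essential.
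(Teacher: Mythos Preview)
Your plan correctly isolates the crux: once one has engineered the log Radon--Nikodym increments to have the prescribed compound Poisson laws, one inclusion between the associated flow and the target Poisson flow is essentially constructive, and the hard part is the reverse inclusion. But the proposal does not resolve that obstacle---``build in a mixing ingredient that absorbs additional tail invariants'' is a description of what is needed, not a method. For periodic flows (type III$_\lambda$) this reverse inclusion reduces to checking a single eigenvalue, which is why the \cite{BV20,KS20} constructions you cite succeed there; for an arbitrary aperiodic Poisson flow there is no such finite checklist, and this is exactly where the paper departs from your outline.

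The paper does not build the measures $\mu_g$ by hand and then argue separately for each inclusion. Instead it observes that for a countable amenable $G$, the nonsingular Poisson suspension of the translation action $G\actson(G\times\N,\gamma_0)$, for a suitably chosen $\gamma_0$, is \emph{canonically} a Bernoulli action of $G$ with countable base space (this is \eqref{eq.my-poisson-identification}). The point of passing through Poisson suspensions is that the resulting action has an IDPFT structure: it decomposes as $\prod_n(X_n,\mu_n)$ where each factor carries an equivalent $G$-invariant probability measure $\nu_n$, and the pmp actions $G\actson(X_n,\nu_n)$ are mixing along a subset $\Lambda\Lambda^{-1}\subset G$ selected via \cite[Lemma~7.17]{AIM19}. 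A F{\o}lner subsequence is then chosen so that the Maharam extension of the full action is recurrent along the same $\Lambda\Lambda^{-1}$ (Proposition~\ref{prop.recurrence-along-Poisson}), and \cite[Theorem~7.14]{AIM19} forces every $G$-invariant function in the Maharam extension to factor through the tail $\sigma$-algebra of the product. This delivers both inclusions in (c) at once, and weak mixing for \emph{every} amenable $G$ comes out of the same argument---your appeal to Theorem~\ref{thm.main-structure} for step~(b) only covers $G=\Z$.

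Your quasi-tiling route is closer in spirit to the earlier constructions and may well be completable, but the paper's Poisson suspension shortcut handles the reverse inclusion structurally rather than through an ad hoc mixing estimate, and avoids the Ornstein--Weiss machinery entirely.
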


As a byproduct of our results on Poisson flows, it follows that every almost periodic flow is the flow of weights of an ITPFI$_2$ factor, answering a question that remained open since \cite{HO83,GS84}.

\begin{letterthm}\label{thm.almost-periodic-ITPFI-2}
Every injective factor with almost periodic flow of weights is an ITPFI$_2$ factor.
\end{letterthm}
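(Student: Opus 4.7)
The strategy is to combine three ingredients that have been announced in the introduction: the Connes--Krieger--Haagerup classification of injective factors by their flow of weights, Theorem \ref{thm.almost-periodic-Poisson} identifying almost periodic flows as Poisson flows, and Theorem \ref{thm.Poisson-vs-ITPFI-2} characterising the flows of weights of ITPFI$_2$ factors as exactly the Poisson flows of positive type. Let $M$ be an injective factor with almost periodic flow of weights $\R \actson (Z,\eta)$. The semifinite case (trivial or periodic flow) is classical: the hyperfinite factors of type II$_1$, II$_\infty$ and III$_\lambda$, $\lambda \in (0,1)$, are well known to be ITPFI$_2$. So I may assume that $M$ is of type III$_0$ or III$_1$, and by the classification of injective type III factors by their flow of weights, it then suffices to exhibit a single ITPFI$_2$ factor whose flow of weights is isomorphic to $\R \actson (Z,\eta)$.

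By Theorem \ref{thm.almost-periodic-Poisson}, the flow $\R \actson (Z,\eta)$ is a Poisson flow. To conclude via Theorem \ref{thm.Poisson-vs-ITPFI-2}, one must know that this Poisson flow is moreover of positive type in the sense of Definition \ref{def.poisson-flow}. I would therefore inspect the construction underlying Theorem \ref{thm.almost-periodic-Poisson}: an almost periodic flow is a translation flow on the Pontryagin dual of a countable subgroup $\Lambda \subset \R$, and the natural realisation as a tail boundary flow uses compound Poisson transition probabilities built from two-point measures on $\R$. I expect that one can choose these two-point measures to be supported in $[0,\infty)$ (or more generally to satisfy whatever one-sided positivity condition is encoded in Definition \ref{def.poisson-flow}) without losing the ability to reproduce the given almost periodic flow. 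This upgrade from \textquotedblleft Poisson flow\textquotedblright\ to \textquotedblleft Poisson flow of positive type\textquotedblright\ is the only substantive point.

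Once this upgrade is established, Theorem \ref{thm.Poisson-vs-ITPFI-2} directly produces an ITPFI$_2$ factor $N$ with flow of weights isomorphic to $\R \actson (Z,\eta)$, and the classification of injective factors then gives $M \cong N$, as required. The main obstacle in this plan is thus the positive-type refinement of Theorem \ref{thm.almost-periodic-Poisson}; conceptually, Theorem \ref{thm.almost-periodic-ITPFI-2} is not an independent argument but a corollary obtained by concatenating the three ingredients above, with the positivity check as the single nontrivial link.
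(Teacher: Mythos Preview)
Your approach is exactly the paper's: Theorem~\ref{thm.almost-periodic-ITPFI-2} is stated there as an immediate consequence of Theorems~\ref{thm.almost-periodic-Poisson} and~\ref{thm.Poisson-vs-ITPFI-2} together with the classification of injective factors. However, the ``only substantive point'' you flag is not actually a gap: reread the statement of Theorem~\ref{thm.almost-periodic-Poisson}, which already asserts that every ergodic almost periodic flow is a Poisson flow \emph{of positive type}, not merely a Poisson flow. The positivity is built into the construction in the proof of that theorem (the measures $\gamma_k$ are supported on $\N$), so no separate upgrade is needed and the concatenation is genuinely immediate.
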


To put Theorem \ref{thm.almost-periodic-ITPFI-2} in a proper context, recall that an ergodic almost periodic flow is precisely given by the translation action $\R \actson \Lambdah$ where $\Lambda \subset \R$ is a countable subgroup. So, for every countable subgroup $\Lambda \subset \R$, there is a unique injective factor $M_\Lambda$ with flow of weights the translation action $\R \actson \Lambdah$. Connes' $T$-invariant of $M_\Lambda$ equals $\Lambda$. In \cite[Theorem 2]{HO83}, it was proven that for every $\al \in \R\setminus \{0\}$ and every subgroup $\Lambda \subset \al \Q$, the factor $M_\Lambda$ is ITPFI$_2$. In \cite[Proposition 1.1]{GS84}, it was proven that for every countable subgroup $\Lambda \subset \R$, there exists an ITPFI$_2$ factor $M$ with $T(M) = \Lambda$, but it remained unclear if $M \cong M_\Lambda$. We now prove in Theorem \ref{thm.almost-periodic-ITPFI-2} that all $M_\Lambda$ are ITPFI$_2$ factors.

Every ITPFI$_2$ factor is infinitely divisible. It is natural to speculate that also the converse holds. One may also speculate that the infinitely divisible ergodic flows are exactly the Poisson flows. If both of these speculations are true, it follows from Theorem \ref{thm.main-structure} and Theorem \ref{thm.main-example} that the class of injective factors that can be realized as the crossed product $L^{\infty}(X)\rtimes \Z$ by a conservative nonsingular Bernoulli action $\Z\actson (X,\mu)$ equals the class of ITPFI$_2$. We refer to Remark \ref{rem.concluding-flows} for a further discussion.

Poisson flows also appear naturally in the context of nonsingular Poisson suspensions (see Section \ref{sec:Poisson suspension} for terminology). Given an infinite, $\sigma$-finite, standard measure space $(X_0,\mu_0)$ with Poisson suspension $(X,\mu)$, under the appropriate assumptions, a nonsingular action $G \actson (X_0,\mu_0)$ has a canonical nonsingular suspension $G \actson (X,\mu)$. To a certain extent, Poisson suspensions can be viewed as generalizations of Bernoulli actions, and many results were obtained recently (see \cite{Roy08,DK20,DKR20,Dan21}). In particular in \cite{Dan21}, it was proven that any locally compact second countable group $G$ that does not have property~(T) admits nonsingular Poisson suspension actions of any possible type. We prove that in type III$_0$, any Poisson flow may arise as associated flow.

\begin{letterprop}\label{prop.poisson-suspension}
Let $G$ be any locally compact second countable group that does not have property~(T). Let $\R \actson (Z,\eta)$ be any Poisson flow. Then $G$ admits a nonsingular action $G \actson (X_0,\mu_0)$ of which the Poisson suspension $G \actson (X,\mu)$ is well-defined, weakly mixing, essentially free and has associated flow $\R \actson Z$.
\end{letterprop}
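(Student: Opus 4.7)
The strategy parallels that of Theorem \ref{thm.main-example}, with nonsingular Poisson suspensions playing the role of Bernoulli shifts and the non-(T) hypothesis providing the flexibility that amenability provides in the Bernoulli setting. By Proposition \ref{prop.characterize-Poisson-flow-two-point-measures}, I would first realize $\R \actson (Z,\eta)$ as the tail boundary flow of a sequence of two-point probability measures $\rho_n = p_n\delta_{a_n} + (1-p_n)\delta_{b_n}$ on $\R$ with $\sup_n \Var(\rho_n) < +\infty$. The task then becomes to build a nonsingular $G$-action on an infinite $\sigma$-finite measure space $(X_0,\mu_0)$ whose Maharam-type structure, when passed through Poisson suspension, produces precisely this tail boundary as associated flow.

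Since $G$ does not have property~(T), a Connes--Weiss type characterization (extended to lcsc groups) produces a weakly mixing, essentially free p.m.p.\ action $G \actson (Y,\nu)$ together with a sufficient supply of asymptotically invariant subsets and real-valued $1$-cocycles $G \times Y \recht \R$. The base space is then taken to be a countable disjoint union $X_0 = \bigsqcup_n Y_n$ of copies of $Y$, equipped with an infinite $\sigma$-finite measure $\mu_0$ designed so that, on each piece $Y_n$, the Radon--Nikodym derivative $d\mu_0 \circ g / d\mu_0$ along a distinguished sequence of group elements $(g_k) \subset G$ takes only the two values $e^{a_n}$ and $e^{b_n}$, on subsets of relative mass $p_n$ and $1-p_n$ respectively. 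Concretely, each $Y_n$ is split into two $G$-compatible measurable pieces using the cocycle data coming from the non-(T) hypothesis, with relative masses reflecting $\rho_n$.

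The uniform variance bound on $(\rho_n)$ yields the Hellinger-type integrability that is needed for the Poisson suspension $G \actson (X,\mu)$ to be well-defined and nonsingular, cf.\ \cite{DKR20, Dan21}. Essential freeness and weak mixing of the suspension are then standard consequences of essential freeness and weak mixing of the base, argued through the symmetric Fock space realization of the Koopman representation (an invariant vector in the suspension forces an invariant vector in some symmetric tensor power of $L^2(X_0,\mu_0)$, which is excluded by weak mixing of the base). The key point is that the associated flow of the Poisson suspension equals the tail boundary flow of the Radon--Nikodym cocycle of the base, which by our construction equals the tail boundary of $(\rho_n)$, namely $\R \actson (Z,\eta)$.

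The main obstacle, as in \cite{Dan21}, is the construction of the base: simultaneously arranging that the $G$-action on $(X_0,\mu_0)$ is nonsingular with prescribed two-point Radon--Nikodym distributions on each piece, while keeping the global action weakly mixing and essentially free, using only the non-(T) hypothesis on $G$. The refinement beyond \cite{Dan21} is that one must track the entire sequence of two-point distributions $(\rho_n)$ in order to recover the prescribed associated flow $\R \actson Z$, rather than merely their asymptotic scalar parameters, which suffices for prescribing a Krieger type $\text{III}_\lambda$.
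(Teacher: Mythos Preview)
Your plan has the right overall shape---layer the base space and use the non-(T) hypothesis via Connes--Weiss/F{\o}lner data---but there is a genuine gap in the step where you identify the associated flow.

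You assert that ``the associated flow of the Poisson suspension equals the tail boundary flow of the Radon--Nikodym cocycle of the base,'' and you want this tail boundary to be that of your two-point measures $(\rho_n)$. This is not how the associated flow actually arises. Decomposing $X_0 = \bigsqcup_n Y_n$ with each $Y_n$ carrying a $G$-invariant reference measure, the Poisson suspension factors as $(X,\mu) = \prod_n (X_n,\mu_n)$, and each $(X_n,\mu_n)$ carries a $G$-invariant probability $\nu_n$. The IDPFT-type argument then identifies the associated flow with the tail boundary of the distributions $(\psi_n)_*\mu_n$, where $\psi_n = \log(d\nu_n/d\mu_n)$ on the \emph{suspension} factor, not on the base. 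The crucial point is that, by the explicit formula for Radon--Nikodym derivatives under Poisson suspension (essentially $\cT$ applied to a step function, cf.\ \eqref{eq:T notation} and \eqref{eq:curly E RN derivative}), the function $\psi_n$ is an affine function of a Poisson random variable $P_{A_n}$. Hence $(\psi_n)_*\mu_n$ is a translated \emph{Poisson} distribution $\sigma_{\lambda_n,a_n}$, never a two-point measure. Starting from Proposition~\ref{prop.characterize-Poisson-flow-two-point-measures} therefore misaligns with what the suspension naturally produces; the paper starts instead from Proposition~\ref{prop.standard-poisson} precisely because standard Poisson distributions are what appear after suspension.

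There is a second issue with the base. You take a weakly mixing p.m.p.\ action $G \actson (Y,\nu)$ and copies thereof; the paper instead needs an \emph{infinite} measure preserving action $G \actson (W,\rho)$ whose Koopman representation is weakly mixing on all of $L^2(W,\rho)$ (no invariant vectors, not even constants), together with a F{\o}lner sequence $A_n \subset W$. This is manufactured from the Connes--Weiss p.m.p.\ action by passing to a restricted infinite product. The infinite-measure setting is essential: it is what makes each p.m.p.\ factor $G \actson (X_n,\nu_n)$ mixing along a suitable subset $\Lambda\Lambda^{-1}$ (via the symmetric Fock space description of its Koopman representation), which is exactly the hypothesis needed to run the recurrence/mixing argument of \cite[Theorem~7.14]{AIM19} that pins down the tail boundary. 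Finally, essential freeness does not follow from weak mixing of the base; the paper enforces it by adjoining an extra piece $G \times \N$ with Haar measure, whose suspension is a free mixing p.m.p.\ action.
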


By \cite[Theorem G]{DKR20}, Proposition \ref{prop.poisson-suspension} is sharp: if $G$ has property~(T), then every nonsingular Poisson suspension action of $G$ admits an equivalent $G$-invariant probability measure. We actually prove first Proposition \ref{prop.poisson-suspension} and then deduce Theorem \ref{thm.main-example} as a special case, by taking $G \actson G \times \N : g \cdot (h,n) = (gh,n)$ and choosing the measure $\mu_0$ on $G \times \N$ appropriately.

{\it Acknowledgment.} We thank Alexandre Danilenko for his useful comments on the first draft of this paper.

\section{Preliminaries}\label{sec:preliminaries}

\subsection{Nonsingular group actions}

Suppose that $(X,\mu)$ is a standard measure space. The \emph{push forward} of $\mu$ along a Borel map $\vphi\colon X\rightarrow X$, that we denote by $\vphi\mu$ or $\vphi_*\mu$, is the measure given by $(\vphi_*\mu)(\mathcal{U})=\mu(\vphi^{-1}(\mathcal{U}))$. We say that $\vphi$ is \emph{nonsingular} if the measures $\vphi_*\mu$ and $\mu$ are equivalent. If in addition $\vphi$ is invertible, we say it is a \emph{nonsingular automorphism}. In that case we will also use the notation $\mu\circ \vphi$ for the push forward measure $\vphi^{-1}\mu$. The set $\Aut(X,\mu)$ is the group of all nonsingular automorphisms of $(X,\mu)$, where we identify two elements if they agree almost everywhere. It caries a canonical topology, making it into a Polish group. Both the set $\Aut(X,\mu)$ and its topology only depend on the measure class of $\mu$.

For a nonsingular automorphism $\vphi\colon X\rightarrow X$ the Radon-Nikodym derivative
\begin{align*}
\frac{d(\mu\circ \vphi)}{d\mu}\in L^{1}(X,\mu)
\end{align*}
is uniquely determined by the equality
\begin{align*}
\int_{X}F(\vphi(x))\frac{d(\mu\circ\vphi)}{d\mu}(x)d\mu(x)=\int_{X}F(x)d\mu(x), \;\;\text{for every $F\in L^{\infty}(X,\mu)$.}
\end{align*}
A \emph{nonsingular action} of a locally compact second countable group $H$ on a standard measure space $(X,\mu)$ is a continuous homomorphism $\alpha\colon H\rightarrow \Aut(X,\mu)$, which we will also write as $H\overset{\alpha}{\actson} (X,\mu)$. Such an action induces an action $H\curvearrowright L^{\infty}(X,\mu)$ by $(h\cdot F)(x)=F(h^{-1}\cdot x)$. Conversely, if $A$ is a separably represented abelian von Neumann algebra, and $H\actson A$ acts by automorphisms, then there is a standard measure space $(Y,\nu)$, together with a nonsingular action $H\actson (Y,\nu)$, such that the action $H\actson A$ is the action induced by $H\actson (Y,\nu)$. We will frequently identify the actions $H\actson A$ and $H\actson (Y,\nu)$. Recall that a nonsingular group action $H\actson (X,\mu)$ is called \emph{ergodic} if there are no nontrivial $H$-invariant Borel sets (up to measure zero).

We call a \emph{flow} any nonsingular action of $H = \R$. We say that a flow $\R\actson (X,\mu)$ is \emph{periodic} if there exists a $t\in \R\setminus \{0\}$ such that $t\cdot F=F$ for every $F\in L^{\infty}(X,\mu)$.

A nonsingular action $G\actson (X,\mu)$ of a countable group $G$ on a standard measure space $(X,\mu)$ is called \emph{essentially free} if the set $\{x\in X:g\cdot x=x\}$ has measure zero for every $g\neq e$. If $G\actson (X,\mu)$ is essentially free, and if there exists a fundamental domain, i.e.\ a Borel set $\cW \subset X$ such that $(g \cdot \cW)_{g \in G}$ is a partition of $X$ up to measure zero, then $G \actson (X,\mu)$ is called \emph{dissipative}. On the other hand, if for every nonnegligible Borel set $\mathcal{U}\subset X$ there are infinitely many $g\in G$ such that $\mu(g\mathcal{U}\cap\mathcal{U})>0$, then we say the action $G\actson (X,\mu)$ is \emph{conservative}.

If $G \actson (X,\mu)$ is an essentially free nonsingular action, there is a unique partition (up to measure zero) $X = \cU \sqcup \cW$ of $X$ into $G$-invariant Borel sets $\cU$ and $\cW$ such that the action $G \actson (\cU,\mu)$ is conservative and $G \actson (\cW,\mu)$ is dissipative. The actions $G \actson \cU$ and $G \actson \cW$ are called the \emph{conservative, resp.\ dissipative parts} of the action $G \actson X$. Finally note that an essentially free ergodic action must be conservative, except when the action is transitive, i.e.\ when $\mu$ is atomic and supported on a single $G$-orbit, which means that the action is isomorphic with the translation action $G \actson G$.

\subsection{Nonsingular Bernoulli actions}

Suppose that $X_0$ is a standard measurable space and that $G$ is a countable infinite group. For a family of equivalent probability measures $(\mu_g)_{g\in G}$ on $X_0$, consider the product probability space
\begin{align*}
(X,\mu)=\prod_{g\in G}(X_0,\mu_g).
\end{align*}
The action $G\curvearrowright (X,\mu)$ given by $(g\cdot x)_h = x_{g^{-1}h}$, is called a \emph{Bernoulli action} of $G$, or \emph{Bernoulli shift} if $G = \Z$.

By Kakutani's criterion for the equivalence of product measures \cite{Kak48}, the Bernoulli action $G\actson (X,\mu)$ is nonsingular if and only if \eqref{eq:Kakutani criterion} holds, where $H(\mu,\nu)$ denotes the \emph{Hellinger distance} defined by
\begin{equation}\label{eq.hellinger}
H^2(\mu,\nu)=\frac{1}{2}\int_{X_0}\bigl(\sqrt{d\mu / d\zeta}-\sqrt{d\nu / d\zeta}\bigr)^2 \, d\zeta = 1 - \int_{X_0} \sqrt{\frac{d\mu}{d\zeta}} \, \sqrt{\frac{d\nu}{d\zeta}} \, d\zeta \; ,
\end{equation}
where $\zeta$ is any probability measure on $X$ with $\mu,\nu \prec \zeta$.

If $(X,\mu)=\prod_{g\in G}(X_0,\mu_g)$ is nonatomic and if the Bernoulli action $G\actson (X,\mu)$ is nonsingular, then it is essentially free, by \cite[Lemma 2.2]{BKV19}.

Recall that for any permutation $\rho$ of $G$ (finite or infinite) such that the induced transformation
\begin{align*}
\alpha_{\rho}\colon X\rightarrow X: (\rho\cdot x)_h=x_{\rho^{-1}(h)}
\end{align*}
is nonsingular we have that
\begin{align*}
\frac{d(\mu\circ \alpha_{\rho})}{d\mu}(x)=\prod_{h\in G}\frac{d\mu_{\rho(h)}}{d\mu_h}(x_h), \;\;\text{with unconditional convergence a.e. on $X$}.
\end{align*}
We then have that
\begin{align*}
1-H^2(\mu \circ \al_\rho,\mu)=\prod_{h\in G}(1-H^{2}(\mu_{\rho(h)},\mu_h))>0.
\end{align*}

\subsection{Maharam extension and associated flow}

Let $\lambda$ be the Lebesgue measure on $\R$. The \emph{Maharam extension} of a nonsingular automorphism $\vphi\in \Aut(X,\mu)$ is the nonsingular automorphism $\widetilde{\vphi}\in \Aut(X\times \mathbb{R},\mu\times \lambda)$ that is given by
\begin{align*}
\widetilde{\vphi}(x,t)=\bigl(\vphi(x), t+\log\frac{d(\mu\circ \vphi)}{d\mu}(x)\bigr) \; .
\end{align*}
Note that $\widetilde{\vphi}$ preserves the infinite measure $d\mu(x) \times \exp(-t) d\lambda(t)$. Also note that $\vphi\mapsto \widetilde{\vphi}$ is a continuous group homomorphism between the Polish groups $\Aut(X,\mu)$ and $\Aut(X\times\R,\mu\times \lambda)$.

The translation action $s \cdot (x,t) = (x, t+s)$ commutes with every $\widetilde{\vphi}$. For any nonsingular action $G \actson (X,\mu)$ Krieger's \emph{associated flow} (see \cite{Kri76}) is defined as the action of $\R$ on the ergodic decomposition of the Maharam extension $G \actson X \times \R$, which amounts to the action of $\R$ on $L^\infty(X \times \R)^G$.

Recall from the introduction how the type of a nonsingular group action $G \actson (X,\mu)$ is defined and that, for essentially free, ergodic actions of amenable groups, the type and associated flow form a complete invariant of the action, both up to orbit equivalence and up to isomorphism of the crossed product factors $L^\infty(X) \rtimes G$.

\section{\boldmath Poisson flows and infinite divisibility: proof of Theorem \ref{thm.almost-periodic-ITPFI-2}}\label{sec.poisson-flow}

\subsection{Tail boundary flows}\label{sec.tail-boundary}

Recall from \cite{CW88} the construction of the tail boundary flow as the Poisson boundary of a time-dependent Markov random walk on $\R$ with transition probabilities $(\mu_n)_{n \in \N}$. Consider
\begin{equation}\label{eq.path}
(\Omega, \mu)=\prod_{k=1}^{\infty}(\R,\mu_k) \quad\text{and}\quad (\Omega_n, \mutil_n)=\prod_{k=n+1}^{\infty}(\R,\mu_k) \; .
\end{equation}
Choose a probability measure $\mu_0$ on $\R$ that is equivalent with the Lebesgue measure and define the nonsingular maps
\begin{align*}
\pi_n\colon (\R \times \Omega, \mu_0 \times \mu) \to (\R \times \Omega_n,\mu_0 \times \mutil_n) : \pi_n(t,\omega)= (t+\omega_1+\cdots +\omega_n,\omega_{n+1},\ldots) \; .
\end{align*}
Define the von Neumann algebras $B = L^\infty(\R \times \Omega)$ and $A_n = (\pi_n)_*(L^\infty(\R \times \Omega_n))$. Then the tail boundary is defined as $A=\bigcap_{n\geq 0} A_n$. The translation action of $\R$ in the first variable defines an ergodic action of $\R$ on $A$, which is called the tail boundary flow.

We refer to \cite[Section 2.3]{BV20} for several basic results on tail boundary flows.

Tail boundary flows play a key role in this paper. When working with elements $x$ of a product space as in \eqref{eq.path}, we always denote by $x_k$ or $x_n$ the natural coordinates of $x$.

Note that the tail boundary flow does not change if we permute the probability measures $\mu_n$. More precisely, if $\si : \N \to \N$ is any bijection, then the tail boundary flows of $(\mu_n)_{n \in \N}$ and $(\mu_{\si(n)})_{n \in \N}$ are canonically isomorphic in the following way. Denoting by $(\Omegatil,\mutil)$ the path space for the family $(\mu_{\sigma(n)})_{n \in \N}$, the map
$$\theta : (\R \times \Omega , \mu_0 \times \mu)  \to (\R \times \Omegatil,\mu_0 \times \mutil) : \theta(t,\omega) = (t,\omega_{\si(1)},\omega_{\si(2)},\ldots)$$
is a measure preserving bijection. We consider the von Neumann subalgebras $A_n \subset L^\infty(\R \times \Omega)$ and $\Atil_n \subset L^\infty(\R \times \Omegatil)$, with intersections $A = \bigcap_n A_n$ and $\Atil = \bigcap_n \Atil_n$ realizing the respective tail boundaries. For every $n \in \N$, there exists an $m \in \N$ such that $\si(\{1,\ldots,n\}) \subset \{1,\ldots,m\}$ and $\si^{-1}(\{1,\ldots,n\}) \subset \{1,\ldots,m\}$.  Then, $\theta_*(A_m) \subset \Atil_n$ and $\Atil_m \subset \theta_*(A_n)$. Therefore, $\theta_*(A) = \Atil$, implementing the isomorphism.

Note that tail boundary $G$-actions can be associated in a similar way to any family $(\mu_n)_{n \in \N}$ of Borel probability measures on a locally compact second countable abelian group $G$.

We prove in this section two results on tail boundary flows that are of independent interest. First recall for future reference the following well known and easy result. For completeness, we include the short proof. The Hellinger distance $H$ was defined in \eqref{eq.hellinger}. We also make use of the \emph{total variation distance} between probability measures $\mu,\nu$ on a standard Borel space $X$:
$$\dTV(\mu,\nu) = \sup\bigl\{ |\mu(A) - \nu(A)| \bigm| A \subset X \;\text{Borel}\;\bigr\} = \frac{1}{2} \int_X |d\mu / d\zeta - d\nu /d\zeta| \, d\zeta \; ,$$
whenever $\zeta$ is a probability measure on $X$ with $\mu,\nu \prec \zeta$. Note that
$$H^2(\mu,\nu) \leq \dTV(\mu,\nu) \leq \sqrt{2} \, H(\mu,\nu)$$
for all probability measures $\mu$ and $\nu$ on $X$.

\begin{lemma}[{Cf.\ \cite[Lemma 2.5]{CW88}}]\label{lem.close-in-hellinger}
If $(\mu_n)_{n \in \N}$ and $(\nu_n)_{n \in \N}$ are sequences of probability measures on $\R$ such that
\begin{equation}\label{eq.hell-sum-finite}
\sum_{n=1}^\infty H^2(\mu_n,\nu_n) < +\infty \; ,
\end{equation}
then the tail boundary flows of $(\mu_n)_{n \in \N}$ and $(\nu_n)_{n \in \N}$ are isomorphic. This conclusion holds in particular if $\sum_{n=1}^\infty \dTV(\mu_n,\nu_n) < +\infty$.
\end{lemma}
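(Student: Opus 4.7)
The plan is to derive both statements directly from Kakutani's criterion for the equivalence of product measures, without needing to construct any explicit isomorphism.

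First, I would dispose of the total variation statement by reducing it to the Hellinger statement. Writing $a=d\mu/d\zeta$ and $b=d\nu/d\zeta$, the pointwise inequality $(\sqrt{a}-\sqrt{b})^{2}\le |a-b|$ (which follows from $|\sqrt{a}-\sqrt{b}|\cdot(\sqrt{a}+\sqrt{b})=|a-b|$ and $\sqrt{a}+\sqrt{b}\ge |\sqrt{a}-\sqrt{b}|$) yields $H^{2}(\mu,\nu)\le \tfrac{1}{2}\|\mu-\nu\|_{\TV}$. So $\sum_n \|\mu_n-\nu_n\|_{\TV}<+\infty$ implies \eqref{eq.hell-sum-finite}, and from now on I only need to assume the Hellinger condition.

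Next, I apply Kakutani's theorem \cite{Kak48}: the assumption $\sum_{n\ge 1} H^{2}(\mu_n,\nu_n)<+\infty$ implies that the product measures $\prod_{k\ge 1}\mu_k$ and $\prod_{k\ge 1}\nu_k$ on $\Omega = \R^{\N}$ are equivalent, and likewise that $\mutil_n$ (built from $(\mu_k)_{k>n}$) is equivalent to its $\nu$-counterpart for every $n\ge 0$, since the tails of the Hellinger series are also summable.

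The third and decisive observation is that everything in the construction of the tail boundary flow depends only on measure classes and on the set-theoretic maps $\pi_n$, not on the specific choice of measure within its class. Indeed, the ambient algebra $B=L^{\infty}(\R\times\Omega,\,\mu_0\times\prod_k\mu_k)$ and the subalgebra $A_n=(\pi_n)_{*}L^{\infty}(\R\times\Omega_n,\,\mu_0\times\mutil_n)$ are unchanged if one replaces the reference measure by any equivalent one, because $L^{\infty}$ of a standard measure space only sees the measure class, and the maps $\pi_n\colon \R\times\Omega\to\R\times\Omega_n$ are literally the same formula in both settings. Hence the subalgebras $A_n$ computed from $(\mu_k)$ coincide as subalgebras of $B$ with those computed from $(\nu_k)$, and so does $A=\bigcap_{n\ge 0}A_n$. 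The $\R$-action, being translation in the first variable, is identical in the two pictures. This gives an equality (not merely an isomorphism) of the two tail boundary flows, and in particular proves the lemma.

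There is no real obstacle here beyond citing Kakutani and the standard $H^{2}\le \tfrac{1}{2}\|\cdot\|_{\TV}$ inequality; the mild subtlety is purely notational, namely pointing out that the definition of $A$ is manifestly invariant under replacing the reference product measure by an equivalent one.
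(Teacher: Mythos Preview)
Your argument has a genuine gap in the step where you invoke Kakutani's theorem. Kakutani's criterion says that the product measures $\prod_k \mu_k$ and $\prod_k \nu_k$ are equivalent if and only if $\sum_k H^2(\mu_k,\nu_k)<+\infty$ \emph{and} each $\mu_k$ is equivalent to $\nu_k$. The lemma, however, does not assume $\mu_n\sim\nu_n$. A single $n$ with $\mu_n(V)>0=\nu_n(V)$ already makes the two product measures inequivalent, regardless of how small the Hellinger distances are. For a concrete example, take $\mu_n=\delta_0$ and $\nu_n=(1-2^{-n})\delta_0+2^{-n}\delta_1$: then $\sum_n H^2(\mu_n,\nu_n)<+\infty$, yet $\prod_n\mu_n$ is a Dirac mass while $\prod_n\nu_n$ is not, so the two products are mutually singular. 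In such cases your ``measure class only'' observation, while correct in itself, cannot be applied, because the two measure classes on $\R\times\Omega$ are genuinely different.

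This is precisely why the paper's proof does the extra work. It first takes the Lebesgue decomposition $\R=U_n\sqcup V_n\sqcup W_n$ with $\mu_n|_{U_n}\sim\nu_n|_{U_n}$, $\nu_n(V_n)=0$, $\mu_n(W_n)=0$, and observes from \eqref{eq.hell-sum-finite} that $\sum_n\mu_n(V_n)<+\infty$ and $\sum_n\nu_n(W_n)<+\infty$. Then on the subsets $X_n=\{(t,x):x_m\in U_m\text{ for all }m>n\}$ Kakutani \emph{does} apply and gives a nonsingular identification; the summability of the bad parts ensures that the corresponding projections $p_n$ increase to $1$, so these partial identifications can be glued to an isomorphism of the tail boundaries. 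Your argument would be complete (and indeed slicker than the paper's) under the additional hypothesis $\mu_n\sim\nu_n$ for every $n$, but as stated the lemma needs this decomposition--and--gluing step.
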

\begin{proof}
Partition $\R$ into Borel sets $\R = U_n \sqcup V_n \sqcup W_n$ such that $\mu_n|_{U_n} \sim \nu_n|_{U_n}$, $\nu_n(V_n) = 0$ and $\mu_n(W_n) = 0$. From \eqref{eq.hell-sum-finite}, it follows in particular that
\begin{equation}\label{eq.ae}
\sum_{n=1}^\infty \mu_n(V_n) < + \infty \quad\text{and}\quad \sum_{n =1}^\infty \nu_n(W_n) < + \infty \; .
\end{equation}
Define $(\Om,\mu)$ and $(\Om_n,\mutil_n)$ as in \eqref{eq.path}, so that the tail boundary $A$ of $(\mu_n)_{n \in \N}$ is realized as the intersection of $A_n = (\pi_n)_*(L^\infty(\R \times \Om_n))$ inside $B = L^\infty(\R \times \Om)$. Similarly define $(\Om',\nu)$, $(\Om'_n,\nutil_n)$ and $\pi'_n$ so that the tail boundary $C$ of $(\nu_n)_{n \in \N}$ is realized as the intersection of $C_n = (\pi'_n)_*(L^\infty(\R \times \Om'_n))$ inside $D = L^\infty(\R \times \Om')$.

Define $X_n \subset \R \times \Om_n$ by $(t,x) \in X_n$ iff $x_m \in U_m$ for all $m \geq n+1$. Similarly define $X'_n \subset \R \times \Om'_n$. Denote $p_n = (\pi_n)_*(1_{X_n})$ and $p'_n = (\pi'_n)_*(1_{X'_n})$.
The Kakutani criterion for the equivalence of product measures implies that the identity map $X_n \to X'_n$ is a nonsingular isomorphism, inducing a $*$-isomorphism $\theta_n : A_n p_n \to C_n p'_n$. By \eqref{eq.ae}, the projections $p_n \in A$ and $p'_n \in D$ are increasing to $1$. The $*$-isomorphisms $\theta_n$ are compatible, so that there is a unique $*$-isomorphism $\theta : A \to C$ satisfying $\theta(a)p'_n = \theta_n(a p_n)$ for all $a \in A$ and $n \in \N$. By construction, $\theta$ conjugates the tail boundary flows of $(\mu_n)_{n \in \N}$ and $(\nu_n)_{n \in \N}$.
\end{proof}

We start by proving that such an identification of tail boundary flows also holds under a different approximation assumption, replacing the Hellinger distance by the Wasserstein $2$-metric. We can do even slightly better by taking the Wasserstein $2$-metric w.r.t.\ the metric on $\R$ given by $d(x,y) = |T_\kappa(x-y)|$, where for $\kappa > 0$, we denote by $T_\kappa$ the cutoff function
\begin{equation}\label{eq.cutoff}
T_\kappa : \R \to [-\kappa,\kappa] : T_\kappa(x)= \begin{cases} -\kappa &\;\;\text{if $x \leq -\kappa$,}\\ x &\;\;\text{if $-\kappa \leq x \leq \kappa$,}\\ \kappa &\;\;\text{if $x \geq \kappa$.}\end{cases}
\end{equation}

Recall that a \emph{coupling} between probability measures $\mu$, $\nu$ on $\R$ is a probability measure $\eta$ on $\R^2$ such that, writing $\pi_1 : \R^2 \to \R : \pi_1(x,y) = x$ and $\pi_2 : \R^2 \to \R : \pi_2(x,y) = y$, we have $(\pi_1)_*(\eta) = \mu$ and $(\pi_2)_*(\eta) = \nu$. The set of all couplings between $\mu$, $\nu$ is denoted as $\Gamma(\mu,\nu)$. For every $\kappa > 0$, we then denote by
$$W_{2,\kappa}(\mu,\nu) = \inf_{\eta \in \Gamma(\mu,\nu)} \Bigl(\int_{\R^2} T_\kappa(x-y)^2 \, d\eta(x,y)\Bigr)^{1/2}$$
the Wasserstein $2$-metric between $\mu$ and $\nu$, w.r.t.\ $d(x,y) = |T_\kappa(x-y)|$ on $\R$. Note that the metrics $W_{2,\kappa}$ are equivalent for different values of $\kappa > 0$ and are dominated by the usual Wasserstein $2$-metric $W_2$ w.r.t.\ the metric $d(x,y) = |x-y|$.

\begin{proposition}\label{prop.iso-wasserstein}
If $(\mu_n)_{n \in \N}$ and $(\nu_n)_{n \in \N}$ are sequences of probability measures on $\R$ such that for some $\kappa > 0$,
$$\sum_{n=1}^\infty W_{2,\kappa}(\mu_n,\nu_n)^2 < + \infty \; ,$$
then the tail boundary flows of $(\mu_n)_{n \in \N}$ and $(\nu_n)_{n \in \N}$ are isomorphic.
\end{proposition}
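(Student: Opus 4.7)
The plan is to adapt the argument of Lemma~\ref{lem.close-in-hellinger} by replacing the identity-map identification of coordinates with a near-optimal Wasserstein coupling. For each $n$, choose $\eta_n \in \Gamma(\mu_n,\nu_n)$ with $\int T_\kappa(x-y)^2 \, d\eta_n \leq 2\,W_{2,\kappa}(\mu_n,\nu_n)^2$, so that $\sum_n \int T_\kappa(x-y)^2 \, d\eta_n < \infty$. Form the joint probability space $(\Xi,\zeta) = \prod_n(\R^2,\eta_n)$ with coordinate projections $\omega_n,\omega'_n$ and differences $\delta_n = \omega'_n - \omega_n$, and view $(\R\times\Xi,\mu_0\times\zeta)$ as a common extension of $(\R\times\Om,\mu_0\times\prod_n\mu_n)$ and the analogous $\nu$-space via the two marginal projections.

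Two preliminary reductions are carried out. First, split each $\eta_n$ across $D_\kappa = \{|x-y|\leq\kappa\}$ and its complement. Since $\eta_n(D_\kappa^c) \leq \kappa^{-2}\int T_\kappa(x-y)^2 \, d\eta_n$ is summable, the total-variation distance between $\mu_n,\nu_n$ and the normalised marginals of $\eta_n|_{D_\kappa}$ is summable, and the total-variation part of Lemma~\ref{lem.close-in-hellinger} reduces us to the case where $\eta_n$ is supported on $D_\kappa$, so $|\delta_n|\leq\kappa$ pointwise. Second, set $m_n = \int \delta \, d\eta_n$; then $\sum_n m_n^2 < \infty$ by Cauchy--Schwarz, and replacing $\nu_n$ by $\nu_n \ast \delta_{-m_n}$ is a deterministic shift of each step which leaves every level-$n$ filtration $\sigma(t+S'_n,(\omega'_k)_{k>n})$ unchanged and hence preserves the tail boundary flow of $(\nu_n)$. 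After this centring, the bounded independent centred variables $\delta_n - m_n$ have summable variances, so Kolmogorov's one-series theorem gives that $\sum_n \delta_n$ converges a.s.\ on $\Xi$ to some random variable $D$.

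Both tail boundaries are then realised as subalgebras of $L^\infty(\R\times\Xi)$: the marginal projections identify $A$ with $\hat A := \bigcap_n \sigma(t+S_n,(\omega_k)_{k>n})$ and the tail boundary of $(\nu_n)$ with $\hat C := \bigcap_n \sigma(t+S'_n,(\omega'_k)_{k>n})$. Introduce the enlarged joint-space filtrations $\hat B_n := \sigma(t+S_n,(\omega_k,\omega'_k)_{k>n})$ and $\hat B'_n := \sigma(t+S'_n,(\omega_k,\omega'_k)_{k>n})$. A reverse-martingale/Kolmogorov 0--1 argument shows $\bigcap_n \hat B_n = \hat A$: the couplings $\eta_k$ act independently in $k$, so conditionally on $\sigma(t,\omega)$ the sequence $(\omega'_k)_k$ is independent and its conditional tail is trivial, forcing any $f$ in the intersection to be $\sigma(t,\omega)$-measurable and hence, together with $f \in \hat B_n$, to lie in $\hat A_n$ for every $n$. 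Symmetrically, $\bigcap_n \hat B'_n = \hat C$.

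The final step, which is where the main obstacle lies, is to identify $\bigcap_n \hat B_n$ with $\bigcap_n \hat B'_n$. Writing $\xi_n = S'_n - S_n = D - \sum_{k>n}\delta_k$ with $\sum_{k>n}\delta_k \in \sigma((\omega_k,\omega'_k)_{k>n}) \subseteq \hat B_n$, one obtains $\hat B'_n = \sigma(t+S_n+D,(\omega_k,\omega'_k)_{k>n})$, so that the two filtrations differ at each finite level only by a shift of the first generator by the global random variable $D$. At each fixed $n$ these $\sigma$-algebras are genuinely distinct (since $D$ is not in $\hat B_n$), and the key to concluding is to show that this shift becomes invisible upon intersection over $n$. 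I expect this to follow from the $\R$-equivariance of the tail algebras, combined with the fact that $D = \lim_n \xi_n$ emerges as the a.s.\ limit of the tail-convergent drifts $\xi_n$ supplied by the centring reduction. Granting this identification, $\hat A = \hat C$ as $\R$-invariant subalgebras of $L^\infty(\R\times\Xi)$ and the canonical embeddings yield the desired $\R$-equivariant $*$-isomorphism between the two tail boundary flows.
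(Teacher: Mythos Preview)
Your approach is sound through the penultimate step, but the final step contains a real gap, and the hoped-for conclusion $\hat A = \hat C$ (literal equality of subalgebras of $L^\infty(\R\times\Xi)$) is in general false: functions in $\hat A$ factor through $(t,\omega)$ while functions in $\hat C$ factor through $(t,\omega')$, and unless the coupling is essentially diagonal these subalgebras do not coincide. What your own computation $\hat B'_n = \sigma(t+S_n+D,(\omega_k,\omega'_k)_{k>n})$ actually yields is a \emph{conjugacy}. The nonsingular $\R$-equivariant automorphism $\Psi(t,\xi) = (t+D(\xi),\xi)$ of $\R\times\Xi$ satisfies $\Psi^*(\hat B_n) = \hat B'_n$ for every $n$: if $f(t,\xi) = h(t+S_n,(\omega_k,\omega'_k)_{k>n}) \in \hat B_n$, then from the identity $D+S_n = S'_n + \sum_{k>n}\delta_k$ one gets $(f\circ\Psi)(t,\xi) = h\bigl(t+S'_n + \sum_{k>n}\delta_k,\,(\omega_k,\omega'_k)_{k>n}\bigr) \in \hat B'_n$, since $\sum_{k>n}\delta_k$ is $(\omega_k,\omega'_k)_{k>n}$-measurable. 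Thus $\Psi^*$ restricts to an $\R$-equivariant $*$-isomorphism $\hat A \to \hat C$, and the proof is complete. Your instinct that $D$ is the key ingredient was correct, but it enters as a conjugating shift rather than as something that ``becomes invisible upon intersection''.

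This is essentially how the paper proceeds, though organised asymmetrically. After the change of variables $(x,y) = (\omega-\omega',\omega')$, the $(\mu_n)$-tail is identified for free (since $x+y$ has law $\mu_n$), and the $(\nu_n)$-tail is identified via the nonsingular factor map $Q(t,x,y) = (t+\pi(x,y),y)$, where $\pi(x,y) = \lim_n \sum_{k\le n}(x_k-s_k)$ is exactly your $D$ in the new coordinates. The paper absorbs your truncation-and-centring reductions into a single application of van Kampen's three-series theorem, and in place of your conditional Kolmogorov 0--1 argument for $\bigcap_n \hat B_n = \hat A$ it invokes \cite[Proposition~2.1]{BV20} to identify, for a.e.\ fixed $y$, the conditional tail boundary of $(x_k)$ as the translation flow $\R\actson\R$. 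Both routes rest on the same mechanism: the a.s.\ convergent drift furnishes a shift map that conjugates the two tail filtrations.
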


\begin{proof}
Fix $\kappa > 0$ and choose couplings $\zeta_n \in \Gamma(\mu_n,\nu_n)$ such that
$$\int_{\R^2} T_\kappa(x-y)^2 \, d\zeta_n(x,y) \leq 2^{-n} + W_{2,\kappa}(\mu_n,\nu_n)^2 \; .$$
So we get that
$$\sum_{n=1}^\infty \int_{\R^2} T_\kappa(x-y)^2 \, d\zeta_n(x,y) < +\infty \; .$$
Define $\psi : \R^2 \to \R^2 : \psi(x,y) = (x-y,y)$ and put $\eta_n = \psi_*(\zeta_n)$. Fix a probability measure $\lambda$ on $\R$ that is equivalent with the Lebesgue measure. Consider
$$(X,\mu) = \prod_{n=1}^\infty (\R,\mu_n) \;\; , \quad (Y,\nu) = \prod_{n=1}^\infty (\R,\nu_n) \quad\text{and}\quad (\Om,\eta) = \prod_{n=1}^\infty (\R^2,\eta_n) \; .$$
Whenever $x,y \in \R^\N$, we denote by $(x,y) \in \Om$ the element $(x_1,y_1,x_2,y_2,\ldots)$. As in the definition of the tail boundary flow, define
\begin{align*}
& (\Om_m,\etatil_m) = \prod_{n=m+1}^\infty (\R^2,\eta_n) \quad\text{and}\\
& S_m : \R \times \Om \to \R \times \Om_m : S_m(t,x,y) = \bigl(t + \sum_{i=1}^m (x_i+y_i), x_{m+1},y_{m+1},x_{m+2},y_{m+2},\ldots\bigr) \; .
\end{align*}
Note that the maps $S_m$ are nonsingular factor maps. Denote $B = L^\infty(\R \times \Om,\lambda \times \eta)$ and define $A_m \subset B$ by $A_m = (S_m)_*(L^\infty(\R \times \Om_m))$. Define $A = \bigcap_{m=1}^\infty A_m$. We let $\R$ act by translation in the first variable and obtain in this way an ergodic action $\R \actson A$.

We identify the ergodic action $\R \actson A$ with both the tail boundary flow of $(\mu_n)_{n \in \N}$ and the tail boundary flow of $(\nu_n)_{n \in \N}$.

The first identification can be easily proved as follows and basically holds by definition. Writing
\begin{align*}
& (X_m,\mutil_m) = \prod_{n=m+1}^\infty (\R,\mu_n) \quad\text{and}\\
& s_m : \R \times X \to \R \times X_m : s_m(t,x) = \bigl(t + \sum_{i=1}^m x_i, x_{m+1},x_{m+2},\ldots\bigr) \; ,
\end{align*}
the tail boundary of $(\mu_n)$ is defined by the intersection $C$ of $C_m = (s_m)_*(L^\infty(\R \times X_m))$ inside $D = L^\infty(\R \times X)$. Write $S : \R^2 \to \R : S(x,y) = x+y$. By construction, $S_*(\eta_n) = \mu_n$ and we get the measure preserving factor map
$$P : \R \times \Om \to \R \times X : P(t,x,y) = (t, x_1 + y_1, x_2 + y_2 , \ldots ) \; .$$
By independence, we have that $A \subset P_*(D)$ and $A_m \cap P_*(D) = P_*(C_m)$. Therefore, $A = P_*(C)$.

The second identification requires more work. We disintegrate the probability measures $\eta_n$ w.r.t.\ the second variable. We thus find probability measures $\eta_{n,y}$ on $\R$ such that
$$\int_{\R^2} F \, d\eta_n = \int_\R d\nu_n(y) \, \int_\R d\eta_{n,y}(x) \, F(x,y)$$
for all bounded Borel functions $F : \R^2 \to \R$. Writing, for $y \in Y$,
$$(\Om_y,\eta_y) = \prod_{n=1}^\infty (\R,\eta_{n,y_n}) \; ,$$
we can view $(\Om_y,\eta_y)_{y \in Y}$ as the disintegration of $(\Om,\eta)$ w.r.t.\ the measure preserving factor map $R : \Om \to Y : R(x,y) = y$.

Define the Borel functions
\begin{align*}
& \vphi_n : \R \to [-\kappa,\kappa] : \vphi_n(z) = \int_\R T_\kappa(x) \, d\eta_{n,z}(x) \quad\text{and}\\
& F : Y \to [0,+\infty] : F(y) = \sum_{n=1}^\infty \int_\R T_\kappa(x)^2 \, d\eta_{n,y_n}(x) \; .
\end{align*}
Since
$$\int_Y F(y) \, d\nu(y) = \sum_{n=1}^\infty \int_{\R^2} T_\kappa(x)^2 \, d\eta_n(x,y) < +\infty \; ,$$
we get that $F(y) < +\infty$ for $\nu$-a.e.\ $y \in Y$. Also note that
$$\vphi_n(y)^2 \leq \int_\R T_\kappa(x)^2 \, d\eta_{n,y}(x) \quad\text{so that}\quad \sum_{n=1}^\infty \int_\R \vphi_n(y)^2 \, d\nu_n(y) < +\infty \; .$$
Write $s_n = \int_\R \vphi_n(y) \, d\nu_n(y)$. Define the Borel sets $\cU \subset Y$ and $\cV_y \subset \Om_y$ by
\begin{align*}
y \in \cU &\quad\text{iff}\quad \vphi(y) = \lim_{n \to +\infty} \sum_{k=1}^n (\vphi_k(y_k) - s_k) \;\;\text{exists,}\\
x \in \cV_y &\quad\text{iff}\quad \theta_y(x) = \lim_{n \to +\infty} \sum_{k=1}^n (x_k - \vphi_k(y_k)) \;\;\text{exists.}
\end{align*}
Also define the Borel set
$$\cV = \{(x,y) \in \Om \mid F(y) < +\infty \;\; , \;\; y \in \cU \;\;\text{and}\;\; x \in \cV_y \} \; .$$
We already proved that $F(y) <+\infty$ for $\nu$-a.e.\ $y \in Y$. By van Kampen's version of Kolmogorov's three series theorem (see e.g.\ \cite[Theorem 3 in Section 4.2]{Shi19}), we have that $\nu(Y \setminus \cU) = 0$ and that for all $y \in Y$ with $F(y) < +\infty$, also $\eta_y(\Om_y \setminus \cV_y) = 0$. Since $F(y) < +\infty$ for $\nu$-a.e.\ $y \in Y$, it follows that $\eta(\Om \setminus \cV) = 0$. Write
\begin{equation}\label{eq.good-expression}
\pi : \cV \to \R : \pi(x,y) = \theta_y(x) + \vphi(y) = \lim_{n \to +\infty} \sum_{k=1}^n (x_k - s_k)
\end{equation}
and define the nonsingular factor map $Q : \R \times \Om \to \R \times Y : Q(t,x,y) = (t+\pi(x,y),y)$. We view the tail boundary of $(\nu_n)_{n \in \N}$ as a von Neumann subalgebra $N \subset L^\infty(\R \times Y)$. We prove that $A = Q_*(N)$.

More precisely, we write
\begin{align*}
& (Y_m,\nutil_m) = \prod_{n=m+1}^\infty (\R,\nu_n) \quad\text{and}\\
& r_m : \R \times Y \to \R \times Y_m : r_m(t,y) = \bigl(t + \sum_{i=1}^m y_i, y_{m+1},y_{m+2},\ldots) \; .
\end{align*}
Defining $N_m = (r_m)_*(L^\infty(\R \times Y_m))$, we have $N = \bigcap_{m=1}^\infty N_m$.

It immediately follows from \eqref{eq.good-expression} that $Q_*(N_m) \subset A_m$, so that $Q_*(N) \subset A$. To prove the converse, fix $F \in A$. It follows that for $\nu$-a.e.\ $y \in Y$, we may view the function $F(\cdot,y)$ as an element of the tail boundary for the measures $(\eta_{n,y})_{n \in \N}$. By e.g.\ \cite[Proposition 2.1]{BV20}, this tail boundary is given by the translation action $\R \actson \R$ and we find a unique $K_y \in L^\infty(\R)$ such that $F(t,x,y) = K_y(t+\theta_y(x))$ for a.e.\ $(t,x) \in \R \times \Om_y$. Defining $G_y(t) = K_y(t-\vphi(y))$, we get that $F(t,x,y) = G_y(t+\pi(x,y))$ for a.e.\ $(t,x) \in \R \times \Om_y$. Writing $G(t,y) = G_y(t)$, we have found $G \in L^\infty(\R \times Y)$ such that $Q_*(G) = F$.

It also follows from \eqref{eq.good-expression} that $Q_*(L^\infty(\R \times Y)) \cap A_m = Q_*(N_m)$. Therefore, $G \in N_m$ for all $m \in \N$, so that $G \in N$. This concludes the proof of the proposition.
\end{proof}

In our applications of Proposition \ref{prop.iso-wasserstein} in this paper, we will only need the following elementary estimate for the Wasserstein $2$-distance. Assume that $(\be_n)_{n \in \N}$ are probability measures on $\R$ and assume that for every $n \in \N$, we have $t_n \in \R$ and $p_n \in [0,1]$ with $\sum_{n=1}^\infty p_n = 1$. Define the probability measures $\be = \sum_{n=1}^\infty p_n \, \be_n$ and $\nu = \sum_{n=1}^\infty p_n \, \delta_{t_n}$. Then,
\begin{equation}\label{eq.wasserstein-estimate}
W_2(\be,\nu)^2 \leq \sum_{n=1}^\infty p_n \int_\R (x-t_n)^2 \, d\be_n(x) \; ,
\end{equation}
which follows immediately by using the coupling $\eta = \sum_{n=1}^\infty p_n \, (\be_n \times \delta_{t_n})$.

Secondly, we prove a generalization of Orey's fundamental result in \cite[Theorem 3.1]{Ore66}. He proved that if $(\mu_n)_{n \in \N}$ is a sequence of probability measures on $\R$ with uniformly bounded support, i.e.\ for which there exists a $C > 0$ such that $\mu_n([-C,C]) = 1$ for all $n \in \N$, the tail boundary flow is never properly ergodic: if $\sum_{n=1}^\infty \Var \mu_n = +\infty$, the tail boundary flow is periodic, and if $\sum_{n=1}^\infty \Var \mu_n < +\infty$, the tail boundary flow is given by the translation action $\R \actson \R$.

The following result says that periodicity of the tail boundary flow already follows if we can find finite positive measures $\be_n \leq \mu_n$ such that the sequence $(\be_n)_{n \in \N}$ has uniformly bounded width and such that the sum of the properly normalized variances of $\be_n$ is infinite.

More precisely, we prove the following and in particular, provide a functional analytic proof to \cite[Theorem 3.1]{Ore66}.

\begin{proposition} \label{prop.periodicity-criterion}
Let $(\mu_n)_{n \in \R}$ be a sequence of probability measures on $\R$. Assume that $\beta_n$ are positive finite measures on $\R$ satisfying $\be_n \leq \mu_n$. Assume that there exists a $C > 0$ such that $|x-y| \leq C$ for $\beta_n$-a.e.\ $x,y \in \R$. If
\begin{equation}\label{eq.sum-variance}
\sum_{n=1}^\infty \be_n(\R) \, \Var(\be_n(\R)^{-1} \be_n) = +\infty \; ,
\end{equation}
then the tail boundary flow of $(\mu_n)_{n \in \R}$ is periodic.
\end{proposition}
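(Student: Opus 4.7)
The plan is to adapt Orey's Fourier-analytic method from \cite[Theorem 3.1]{Ore66} to the present setting, where the measures $\mu_n$ only dominate the nicer measures $\be_n$ of uniformly bounded width $C$. Write $p_n = \be_n(\R)$, $\be_n' = p_n^{-1}\be_n$, and $\si_n^2 = \Var(\be_n')$, so that \eqref{eq.sum-variance} reads $\sum_n p_n\si_n^2 = +\infty$. After translation, assume $\supp \be_n' \subset [-C/2,C/2]$. Decomposing $\chi_{\mu_n}(s) = p_n\chi_{\be_n'}(s) + \chi_{\mu_n-\be_n}(s)$, with $|\chi_{\mu_n-\be_n}(s)| \leq 1 - p_n$, and using the identity $|\chi_{\be_n'}(s)|^2 = \iint \cos(s(x-y))\,d\be_n'(x)\,d\be_n'(y)$ together with $\cos u \leq 1 - cu^2$ on $[-\pi,\pi]$, one gets $|\chi_{\be_n'}(s)|^2 \leq 1 - c\,s^2\si_n^2$ for $|s| \leq \pi/C$, and hence
\begin{equation*}
|\chi_{\mu_n}(s)| \;\leq\; 1 - c'\,p_n\,s^2\,\si_n^2, \qquad |s| \leq \pi/C ,
\end{equation*}
for some absolute $c'>0$. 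The divergence hypothesis then yields $\prod_{n=1}^\infty |\chi_{\mu_n}(s)| = 0$ for every $s$ in the punctured interval $(-\pi/C,\pi/C)\setminus\{0\}$.

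The second step is to translate this Fourier decay into periodicity of the tail boundary flow. An $L^\infty$-eigenfunction $F \in A$ of the flow at frequency $s \neq 0$ has the form $F(t,\om) = e^{ist}F_0(\om)$, and tail membership $F \in A_m$ forces the cocycle identity $F_0(\om) = e^{is\,s_m(\om)}H_m(\om_{>m})$ for every $m$ with $H_m \in L^\infty(\Om_m)$. Since $|F_0|$ is measurable for the (trivial) tail $\sigma$-algebra of the independent sequence $(X_n)$, it is a.e.\ constant; normalise $|F_0|=1$. Independence yields $\mathbb{E}[H_m] = \prod_{k=m+1}^{m'}\chi_k(s)\,\mathbb{E}[H_{m'}]$, and letting $m' \to \infty$ forces $\mathbb{E}[H_m] = 0$ for every $m$. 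Using the factorisation $F_0 = e^{is\,s_m}H_m$ with the two factors independent, for any $\phi_m \in L^2(\sigma(\om_{1:m}))$,
\begin{equation*}
\langle F_0,\phi_m\rangle \;=\; \mathbb{E}\bigl[e^{is\,s_m}\bar\phi_m\bigr]\,\mathbb{E}[H_m] \;=\; 0 .
\end{equation*}
As $\bigcup_m L^2(\sigma(\om_{1:m}))$ is dense in $L^2(\Om)$, we deduce $F_0 = 0$, contradicting $|F_0|=1$. Hence the point spectrum of the flow lies in $\{0\}\cup\bigl(\R\setminus(-\pi/C,\pi/C)\bigr)$; being a subgroup of $\R$, it is either $\{0\}$ or a lattice $\lambda\Z$ with $\lambda \geq \pi/C$.

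The hardest part, and the main obstacle, is to upgrade this point-spectral conclusion to a full spectral statement so as to rule out a free (aperiodic) component of the flow. My plan is to run a quantitative version of the above cocycle analysis for approximate $L^2$-eigenvectors: the same product of characteristic functions must control their $L^2$-norms, so the vanishing $\prod_n |\chi_{\mu_n}(s)| = 0$ on the punctured interval forces any such approximate eigenvector to vanish as well. Consequently the tail boundary flow carries no spectral mass in $(-\pi/C,\pi/C)\setminus\{0\}$, and by the classification of ergodic $\R$-flows it is either trivial or a translation on $\R/\lambda\Z$; in either case periodic.
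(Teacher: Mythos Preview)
Your Fourier estimate is sound: writing $\chi_{\mu_n}(s) = p_n\chi_{\be_n'}(s) + \chi_{\mu_n-\be_n}(s)$ and using the width bound on $\be_n'$ gives $|\chi_{\mu_n}(s)| \leq 1 - c'p_ns^2\si_n^2$ for $|s| \leq \pi/C$, hence $\prod_n|\chi_{\mu_n}(s)| = 0$ on that punctured interval, and by the standard criterion (essentially \cite[Theorem~4.2]{CW88}) the tail boundary flow has no $L^\infty$-eigenvalue in $(-\pi/C,\pi/C)\setminus\{0\}$. This does rule out the translation flow $\R \actson \R$, which has $e^{ist}$ as an eigenfunction for every $s$. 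But it does \emph{not} rule out properly ergodic aperiodic flows: such a flow can perfectly well have trivial point spectrum $\{0\}$ and still fail to be periodic. Your concluding sentence invokes a ``classification of ergodic $\R$-flows'' that does not exist---ergodic nonsingular $\R$-actions include the entire zoo of type III$_0$ flows, many of which are aperiodic with trivial point spectrum. So the gap you flag in your last paragraph is real and decisive. The sketched repair via approximate $L^2$-eigenvectors and a spectral-gap argument is not carried out, and it faces a genuine obstacle: the tail boundary flow is only nonsingular, so there is no canonical Koopman unitary and no evident spectral measure to analyse; and even in the pmp setting, ``no spectral mass near $0$'' does not by itself force periodicity. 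The algebraic identity $F_0 = e^{is\,s_m}H_m$ that drives your exact-eigenvector argument has no useful analogue for approximate eigenvectors.

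The paper's proof avoids spectral theory altogether and instead manufactures a convolution equation. It couples each $\mu_n$ with itself on $X_0 = \R \sqcup \R^2$ via $\zeta_n = \al_n \sqcup \be_n(\R)^{-1}(\be_n\times\be_n)$, where $\al_n = \mu_n - \be_n$, obtaining centred bounded variables $T_n$ (equal to $x-y$ on the $\R^2$-part) with $E(T_n^2) = 2p_n\si_n^2$. Grouping these into blocks with variance of fixed order and applying Berry--Esseen, the block sums $S_k$ have laws with a non-degenerate weak subsequential limit $\eta \neq \delta_0$. Two measure-preserving factor maps $X \to \Om$ (projecting to the first, respectively second, coordinate on the $\R^2$-part of block $k$) yield the key identity $W_k(F)(t,z) = V(F)(t - S_k(z),z)$ for every $F$ in the tail boundary; since $W_k \to V$ strongly, passing to the weak limit gives $F = \eta * F$ in the $\R$-variable. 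The Choquet--Deny theorem then forces $F$ to be periodic. This convolution identity, not any spectral statement, is the ingredient your approach is missing.
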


Note that, by convention, if certain $\be_n$ are zero, we interpret the corresponding term in \eqref{eq.sum-variance} as zero. The assumptions of Proposition \ref{prop.periodicity-criterion} say in particular that, for each fixed $n$, the measure $\beta_n$ has a bounded support, so that its mean value and variance are well defined and finite.

\begin{proof}
Note that it suffices to prove the proposition assuming that all $\be_n \neq 0$. Indeed, it then follows that for $I = \{n \in \N \mid \beta_n \neq 0\}$, the tail boundary flow of $(\mu_n)_{n \in I}$ is periodic, from which it follows that, a fortiori, the tail boundary flow of $(\mu_n)_{n \in \N}$ is periodic.

Assume that the sum in \eqref{eq.sum-variance} is infinite. We prove that the tail boundary flow of $(\mu_n)_{n \in \R}$ is periodic.

Define the finite positive measures $\al_n$ such that $\mu_n = \al_n + \be_n$. Define $X_0 = \R \sqcup \R^2$ and equip $X_0$ with the probability measures $\zeta_n = \al_n \sqcup \be_n(\R)^{-1}(\beta_n \times \beta_n)$. Define the map $T : X_0 \to \R$ by $T(x) = 0$ if $x \in \R$ and $T(x,y) = x-y$ if $(x,y) \in \R^2$. Define $(X,\zeta) = \prod_{n \in \N} (X_0,\zeta_n)$ and the independent random variables $T_n : X \to \R : T_n(z) = T(z_n)$. Note that $|T_n| \leq C$ for all $n$, $E(T_n) = 0$ and
$$E(T_n^2) = 2 \be_n(\R) \, \Var(\be_n(\R)^{-1} \be_n) \; .$$
Since $\sum_{n=1}^\infty E(T_n^2) =+\infty$ and $E(T_n^2) \leq C^2$ for every $n \in \N$, we can choose $0 \leq n_1 < n_2 < \cdots$ such that
$$(40 C)^2 \leq \sum_{n=n_{k}+1}^{n_{k+1}} E(T_n^2) \leq (50 C)^2$$
for all $k \in \N$. Since $|T_n| \leq C$ for all $n$, we have that $E(|T_n|^3) \leq C \, E(T_n^2)$ for all $n \in \N$. Define $\si_k \in [40 C,50 C]$ by
$$\si_k^2 = \sum_{n=n_{k}+1}^{n_{k+1}} E(T_n^2) \; .$$
Write
$$S_k = \sum_{n=n_{k}+1}^{n_{k+1}} T_n \quad\text{and}\quad F_k(t) = \zeta\bigl(\{x \in X \mid S_k(x) \leq \si_k \, t \, \}\bigr) \; .$$
Denote by $G$ the distribution function of a standard Gaussian random variable. By the Berry-Esseen theorem (see e.g.\ \cite[Theorem 2.2.17]{Stro11}), we get that
$$|F_k(t) - G(t)| \leq \frac{10 C}{\si_k} \leq \frac{1}{4}$$
for all $k \in \N$ and all $t \in \R$. Take $\delta < 0$ such that $G(\delta) = 1/3$. Then, $F_k(\delta) \geq 1/12$. Since $\si_k \geq 40 C$ and $\delta < 0$, we conclude that
\begin{equation}\label{eq.not-to-zero}
\zeta\bigl(\{x \in X \mid S_k(x) \leq 40 C \delta \}\bigr) \geq \frac{1}{12} \quad\text{for all $k \in \N$.}
\end{equation}

Define $(\Om,\mu) = \prod_{n \in \N} (\R,\mu_n)$. Denote by $\mu_0$ the probability measure on $\R$ given by $d\mu_0(t) = (\pi(1+t^2))^{-1} \, dt$. Consider the Hilbert spaces
$$H = L^2(\R \times \Om, \mu_0 \times \mu) \quad\text{and}\quad K = L^2(\R \times X,\mu_0 \times \zeta)$$
and define the subspaces $H_N \subset H$ by $H_N = L^2((\R,\mu) \times \prod_{n=1}^N (\R,\mu_n))$.

Define the maps
\begin{align*}
&\theta_1 : X_0 \to \R : \theta_1(x) = x \;\;\text{if $x \in \R$,}\;\; \theta_1(x,y) = x \;\;\text{if $(x,y) \in \R^2$,}\\
&\theta_2 : X_0 \to \R : \theta_2(x) = x \;\;\text{if $x \in \R$,}\;\; \theta_2(x,y) = y \;\;\text{if $(x,y) \in \R^2$.}
\end{align*}
Note that $T(z) = \theta_1(z) - \theta_2(z)$ for all $z \in X_0$. We consider the associated measure preserving factor maps
$$\pi : X \to \Om : (\pi(z))_n = \theta_1(z_n) \;\;\text{and}\;\; \rho_k : X \to \Om : (\rho_k(z))_n = \begin{cases} \theta_2(z_n) &\;\text{if $n_k + 1 \leq n \leq n_{k+1}$,}\\
\theta_1(z_n) &\;\text{otherwise.}\end{cases}$$

Define the isometries $V : H \to K$ and $W_k : H \to K$ by
$$V(\xi)(t,z) = \xi(t,\pi(z)) \quad\text{and}\quad W_k(\xi)(t,z) = \xi(t,\rho_k(z)) \; .$$
When $\xi \in H_N$, we have $W_k(\xi) = V(\xi)$ for all $k$ large enough. By density, we get that $W_k \to V$ strongly.

Let $F \in L^\infty(\R \times \Om)$ be a function that generates the tail boundary of $(\mu_n)_{n \in \N}$. Since $F$ belongs to the tail boundary algebra, we have
\begin{equation}\label{eq.the-algebraic-relation}
(W_k(F))(t,z) = (V(F))(t-S_k(z),z) \quad\text{for all $k \in \N$ and a.e.\ $(t,z) \in \R \times X$.}
\end{equation}
Since
\begin{equation}\label{eq.my-inequality}
\frac{1+(t+s)^2}{1+t^2} \leq 2(1+s^2) \quad\text{for all $s,t \in \R$,}
\end{equation}
and since each $S_k$ is a bounded function, we can define the bounded linear operators
$$R_k : K \to K : (R_k(\xi))(t,z) = \xi(t-S_k(z),z) \; .$$
Write $D = \sqrt{2(1+(50C)^2)}$. We claim that
\begin{equation}\label{eq.good-estimate}
\|R_k(\xi)\|_2^2 \leq D \, \|\xi\|_\infty \, \|\xi\|_2 \quad\text{for all $\xi \in L^\infty(\R \times X) \subset K$.}
\end{equation}
To prove this claim, note that the left hand side can be estimated, using the Cauchy-Schwartz inequality and \eqref{eq.my-inequality}, by
\begin{align*}
\frac{1}{\pi}  \int_{X} d\zeta(z) \, &\int_\R dt \; \frac{1}{(1 + (t+S_k(z))^2)} \, |\xi(t,z)|^2 \\
& \leq \|\xi\|_\infty \, \int_{X} d\zeta(z) \, \int_\R dt \; \frac{1}{\sqrt{\pi (1+ t^2)}} \, |\xi(t,z)| \, \frac{\sqrt{1+t^2}}{\sqrt{\pi} (1+(t+S_k(z))^2)} \\
& \leq \|\xi\|_\infty \, \|\xi\|_2 \, \Bigl( \int_{X} d\zeta(z) \, \int_\R dt \; \frac{1 + t^2}{\pi (1+(t+S_k(z))^2)^2}\Bigr)^{1/2} \\
& = \|\xi\|_\infty \, \|\xi\|_2 \, \Bigl( \int_{X} d\zeta(z) \, \int_\R dt \; \frac{1 + (t-S_k(z))^2}{\pi (1+t^2)^2}\Bigr)^{1/2} \\
& \leq \|\xi\|_\infty \, \|\xi\|_2 \, \Bigl( \int_{X} d\zeta(z) \, \int_\R d\mu_0(t) \; 2 (1+S_k(z)^2)\Bigr)^{1/2} = D \, \|\xi\|_\infty \, \|\xi\|_2 \; .
\end{align*}
So, the claim is proven.

Define the probability measure $\eta_k = (S_k)_*(\zeta)$. Then
$$\int_\R t^2 \, d\eta_k(t) = E(S_k^2) \leq (50 C)^2 \quad\text{for all $k \in \N$,}$$
so that $\eta_k$ is a tight family of probability measures. Choose a sequence $k_j \to \infty$ such that $\eta_{k_j}$ converges weakly to a probability measure $\eta$ on $\R$ with $\int_\R t^2 \, d\eta(t) \leq (50 C)^2 < +\infty$. By \eqref{eq.not-to-zero}, we have that $\eta_k((-\infty,40 C \delta]) \geq 1/12$ for all $k \in \N$. Therefore, $\eta \neq \delta_0$.

We can then define the bounded convolution operator
$$K \to K : \xi \mapsto \eta * \xi : (\eta * \xi)(t,z) = \int_\R \xi(t-s,z) \, d\eta(s) \; .$$
Note that by the Cauchy-Schwarz inequality and \eqref{eq.my-inequality},
\begin{align*}
\|\eta * \xi\|_2^2 &\leq \int_X d\zeta(z) \, \int_\R d\eta(s) \, \int_\R d\mu_0(t) \, |\xi(t-s,z)|^2 \\ &\leq \int_X d\zeta(z) \, \int_\R d\eta(s) \, \int_\R d\mu_0(t) \, 2(1+s^2) \, |\xi(t,z)|^2 \; ,
\end{align*}
so that $\|\eta * \xi\|_2 \leq D \, \|\xi\|_2$ for all $\xi \in K$. We have a similar convolution operator $\xi \mapsto \eta * \xi$ on $H$. Note that $V(\eta * \xi) = \eta * V(\xi)$.

We finally prove that for every $\xi \in  L^\infty(\R \times X) \subset K$ and every $\xi' \in K$, we have that
\begin{equation}\label{eq.my-goal}
\lim_j \langle R_{k_j}(\xi),\xi' \rangle = \langle \eta * \xi , \xi' \rangle \; .
\end{equation}
Define the closed subspaces $K_N = L^2(\R \times \prod_{n=1}^N X_0)$ of $K$. Fix $\xi \in L^\infty(\R \times X) \subset K$ and $\xi' \in K$. The orthogonal projection $E_N : K \to K_N$ corresponds to the conditional expectation, so that $\|E_N(\xi)\|_\infty \leq \|\xi\|_\infty$ for all $N \in \N$. By \eqref{eq.good-estimate}, the sequence $\|R_{k_j}(\xi)\|_2$ is bounded. To prove \eqref{eq.my-goal}, we may thus assume that $\xi' \in K_N$ for some $N$. By \eqref{eq.good-estimate}, we also have that
\begin{align*}
\|R_{k_j}(\xi) - R_{k_j}(E_N(\xi))\|_2^2 &= \|R_{k_j}(\xi - E_N(\xi))\|_2^2 \leq D \, \|\xi - E_N(\xi)\|_\infty \, \|\xi - E_N(\xi)\|_2 \\
& \leq 2 D \, \|\xi\|_\infty \, \|\xi - E_N(\xi)\|_2 \; ,
\end{align*}
which tends to zero as $N \to \infty$, uniformly in $j$. To prove \eqref{eq.my-goal}, we may thus also assume that $\xi \in K_N$. When $\xi,\xi' \in K_N$ and $n_{k_j} > N$, we have
$$\langle R_{k_j}(\xi),\xi' \rangle = \langle \eta_{k_j} * \xi , \xi' \rangle$$
for all $j$, so that \eqref{eq.my-goal} follows.

We now return to our element $F$ generating the tail boundary. By \eqref{eq.the-algebraic-relation}, we get that $W_k(F) = R_k(V(F))$. Since $W_k \to V$ strongly, we get that $\|W_k(F) - V(F)\|_2 \to 0$. By \eqref{eq.my-goal}, we have that $R_k(V(F)) \to \eta * V(F) = V(\eta * F)$ weakly. We thus conclude that $F = \eta * F$. Since $\eta \neq \delta_0$, it follows from the Choquet-D\'{e}ny theorem (see \cite{CD60}) that $F$ is periodic in the first variable. So, the tail boundary flow is periodic.
\end{proof}

In our applications of Proposition \ref{prop.periodicity-criterion}, we will use a few times the following elementary equality and estimate
\begin{equation}\label{eq.elementary-variance}
\begin{split}
\Var \be &= \frac{1}{2} \int_{\R^2} (x-y)^2 \, d\be(x) \, d\be(y) \\ &\geq \frac{1}{2} \int_{(\R^* \times \{0\})\cup (\{0\} \times \R^*)} (x-y)^2 \, d\be(x) \, d\be(y)
= \be(\{0\}) \, \int_\R x^2 \, d\be(x) \; ,
\end{split}
\end{equation}
for every probability measure $\be$ on $\R$.

Proposition \ref{prop.periodicity-criterion} already says that aperiodic tail boundary flows can only occur if the measures $(\mu_n)_{n \in \N}$ are sufficiently sparse: for each sequence of intervals $I_n \subset \R$ with uniformly bounded length $|I_n|$, there exist points $s_n \in I_n$ such that
\begin{equation}\label{eq.concentration-one}
\sum_{n=1}^\infty \int_{I_n} (x-s_n)^2 \, d\mu_n(x) < +\infty \; .
\end{equation}
In the following proposition, we develop this further and prove that, under the appropriate assumptions, the resulting sequence $s_n$ must itself be sparse: partitioning $\R$ into intervals $I_k$ of uniformly bounded length, we may assume that within each $I_k$, the points $s_n$ lie close to a single element $t_k \in I_k$.

The proof of Proposition \ref{prop.further-concentration} is closely inspired by \cite[Proposition 1.1]{GS83} and \cite[Lemma 8.6]{AW68}. Although this is not strictly needed for the rest of this paper, we use the opportunity to also deduce from Proposition \ref{prop.further-concentration} a more conceptual proof for the main result of \cite{GS83} saying that every ITPFI factor of bounded type is isomorphic with an ITPFI$_2$ factor; see Theorem \ref{thm.ITPFI-bounded}. Recall that, by definition, an ITPFI factor of bounded type is an infinite tensor product of matrix algebras $M_{n_k}(\C)$ with $\sup_k n_k < +\infty$.

\begin{proposition}\label{prop.further-concentration}
Let $(\mu_n)_{n \in \N}$ be probability measures on $\R$. Let $(J_k)_{k \in \N}$ be a family of disjoint subsets of $\N$. Assume that for every $k \in \N$, we are given $p_k, q_k > 0$ and an interval $I_k \subset \R \setminus (-1,1)$. Assume that $|I_k| \leq C$ for all $k \in \N$.

Assume that for each $k \in \N$ and $n \in J_k$, we are given a probability measure $\be_n$ with $\be_n(I_k) = 1$ and $p_k \delta_0 + q_k \beta_n \leq \mu_n$.

If the tail boundary flow of $(\mu_n)_{n \in \N}$ is aperiodic, each $J_k$ is a finite set and there exist $t_k \in I_k$ such that
\begin{equation}\label{eq.concentration-two}
\sum_{k=1}^\infty \sum_{n \in J_k} p_k q_k \int_\R (x-t_k)^2 \, d\be_n(x) < +\infty \; .
\end{equation}
\end{proposition}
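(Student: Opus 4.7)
My plan is to apply Proposition~\ref{prop.periodicity-criterion} twice, with two different choices of subordinate positive measure $\alpha_n \leq \mu_n$, and then to combine the two resulting bounds via the bias--variance decomposition of $\int_\R (x-t_k)^2 \, d\beta_n(x)$. Remarkably, no clever choice of $t_k$ will be needed: any point $t_k \in I_k$ will do, so the content lies entirely in producing summable control on $\sum q_k \Var(\beta_n)$ and on the weights $|J_k|\, p_k q_k$.

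For the first application, set $\alpha_n = p_k \delta_0 + q_k \beta_n$ when $n \in J_k$ and $\alpha_n = 0$ otherwise. Since $I_k \subset \R \setminus (-1,1)$ has length at most $C$, each $\alpha_n$ is supported in $\{0\} \cup I_k \subset [-(1+C),\,1+C]$, so the uniform width hypothesis of Proposition~\ref{prop.periodicity-criterion} is met. Applying \eqref{eq.elementary-variance} to the probability measure $\alpha_n/(p_k+q_k)$ and using $x^2 \geq 1$ on $I_k$, one obtains $\alpha_n(\R)\,\Var(\alpha_n/\alpha_n(\R)) \geq p_k q_k/(p_k+q_k)$. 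Aperiodicity, via Proposition~\ref{prop.periodicity-criterion}, then forces $\sum_k |J_k|\, p_k q_k/(p_k+q_k) < +\infty$; this makes every $J_k$ finite, and since $p_k + q_k \leq \mu_n(\R) = 1$, it also yields $\sum_k |J_k|\, p_k q_k < +\infty$.

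For the second application, take $\alpha_n = q_k \beta_n$ for $n \in J_k$, which is supported on $I_k$ of width at most $C$. Here $\alpha_n(\R)\,\Var(\alpha_n/\alpha_n(\R)) = q_k \Var(\beta_n)$, so Proposition~\ref{prop.periodicity-criterion} gives $\sum_k \sum_{n \in J_k} q_k \Var(\beta_n) < +\infty$; since $p_k \leq 1$, this upgrades to $\sum_k \sum_{n \in J_k} p_k q_k \Var(\beta_n) < +\infty$.

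To conclude, pick any $t_k \in I_k$. Writing $m_n = \int_\R x \, d\beta_n(x)$, both $m_n$ and $t_k$ lie in $I_k$ of length at most $C$, so $(m_n - t_k)^2 \leq C^2$. The identity $\int_\R (x-t_k)^2 \, d\beta_n = \Var(\beta_n) + (m_n - t_k)^2$ together with the two bounds above gives $\sum_{k,n} p_k q_k \int_\R (x-t_k)^2 \, d\beta_n \leq \sum_{k,n} p_k q_k \Var(\beta_n) + C^2 \sum_k |J_k|\, p_k q_k < +\infty$, which is exactly \eqref{eq.concentration-two}. No genuine obstacle arises: the whole argument reduces to two applications of Proposition~\ref{prop.periodicity-criterion} followed by a trivial estimate.
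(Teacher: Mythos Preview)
Your argument has a genuine gap in the first application of Proposition~\ref{prop.periodicity-criterion}. You claim that $\{0\}\cup I_k \subset [-(1+C),\,1+C]$, but the hypothesis $I_k \subset \R\setminus(-1,1)$ does \emph{not} say that $I_k$ lies within distance $1$ of the origin; it only says that $I_k$ avoids the open interval $(-1,1)$. The intervals $I_k$ can sit arbitrarily far out (think $I_k=[k,k+C]$), so the measures $p_k\delta_0+q_k\beta_n$, taken over all $k$, do \emph{not} have uniformly bounded width, and Proposition~\ref{prop.periodicity-criterion} does not apply to them simultaneously. Applying it for each fixed $k$ separately only gives $|J_k|<+\infty$, with no control on the sum $\sum_k |J_k|\,p_kq_k$.

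This is not a cosmetic issue: your intermediate claim $\sum_k |J_k|\,p_kq_k<+\infty$ is equivalent to saying that \emph{every} choice of $t_k\in I_k$ works in \eqref{eq.concentration-two}, which is strictly stronger than what the proposition asserts and is not available in general. The paper's proof reflects this: after your second application (which is correct and yields $\sum_{k,n} q_k\Var(\beta_n)<+\infty$, hence concentration around the barycenters $s_n=\int x\,d\beta_n$), one still needs to pass from the $n$-dependent $s_n$ to a single $t_k$ per interval. The paper does this by choosing $t_k$ as the \emph{median} of $\{s_n:n\in J_k\}$, pairing the $n\in J_k$ into $A_k$ and $B_k$ on either side of $t_k$ via a bijection $\alpha_k:A_k\to B_k$, and then applying Proposition~\ref{prop.periodicity-criterion} to the convolved measures $\mu_n*\mu_{\alpha_k(n)}$ with subordinate measures $p_kq_k(\beta_n+\beta_{\alpha_k(n)})\leq \mu_n*\mu_{\alpha_k(n)}$ (these \emph{are} supported in $I_k$, of width $\leq C$). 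The resulting variance bound controls $(s_{\alpha_k(n)}-s_n)^2$, which by the median property dominates $(s_n-t_k)^2+(s_{\alpha_k(n)}-t_k)^2$. That convolution/median step is the missing idea in your attempt.
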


Note that there is a twofold difference between \eqref{eq.concentration-one} and \eqref{eq.concentration-two}. In \eqref{eq.concentration-two}, the concentration points $t_k$ only depend on the interval $I_k$ and are thus the same for each $n \in J_k$. On the other hand, the factor $p_k q_k$ appearing in \eqref{eq.concentration-two} is strictly smaller than the factor $q_k = (q_k \beta_n)(I_k)$ that we would get in \eqref{eq.concentration-one}.

\begin{proof}
For a fixed $k \in \N$, we have that the measures $(p_k \delta_0 + q_k \beta_n)_{n \in J_k}$ have a uniformly bounded support in $\{0\} \cup I_k$. It thus follows from Proposition \ref{prop.periodicity-criterion} that
$$(p_k + q_k) \sum_{n \in J_k} \Var((p_k+q_k)^{-1} (p_k \delta_0 + q_k \beta_n)) < +\infty \; .$$
Since $I_k \subset \R \setminus (-1,1)$ and $\be_n$ is supported on $I_k$, by \eqref{eq.elementary-variance}, the left hand side is larger or equal than $p_k q_k (p_k+q_k)^{-1} |J_k|$, so that $J_k$ is a finite set.

We can also apply Proposition \ref{prop.periodicity-criterion} to all the finite measures $q_k \beta_n \leq \mu_n$, with $k \in \N$ and $n \in J_k$, because they have uniformly bounded width $C$. Defining for any $k \in \N$ and $n \in J_k$, the point $s_n \in I_k$ by
$$s_n = \int_\R x \, d\beta_n(x) \; ,$$
we get that
\begin{equation}\label{eq.limit-here}
\sum_{k=1}^\infty \sum_{n \in J_k} q_k \int_\R (x-s_n)^2 \, d\beta_n(x) < +\infty \; .
\end{equation}
For every $k \in \N$, we consider the finitely many points $(s_n)_{n \in J_k}$ in the interval $I_k$. We denote by $t_k$ a ``middle point''. More precisely, if $|J_k|$ is odd, we write $J_k = A_k \sqcup B_k \sqcup \{j_k\}$ with $|A_k| = |B_k|$ such that $s_n \leq s_{j_k} \leq s_m$ for all $n \in A_k$ and $m \in B_k$. We put $t_k = s_{j_k}$. If $|J_k|$ is even, we write $J_k = A_k \sqcup B_k$ with $|A_k| = |B_k|$ such that for some $t_k \in I_k$, we again have that $s_n \leq t_k \leq s_m$ for all $n \in A_k$ and $m \in B_k$. We choose a bijection $\al_k : A_k \to B_k$.

Since the tail boundary flow of $(\mu_n)_{n \in \N}$ is aperiodic, also the measures $\mu_n * \mu_{\al_k(n)}$ with $k \in \N$ and $n \in A_k$ have an aperiodic tail boundary. By construction,
$$p_k q_k (\beta_n + \beta_{\al_k(n)}) \leq \mu_n * \mu_{\al_k(n)} \quad\text{for all $k \in \N$, $n \in A_k$.}$$
Moreover, the measures on the left have their support in $I_k$, with $|I_k| \leq C$ for all $k \in \N$. Again applying Proposition \ref{prop.periodicity-criterion}, we conclude that
$$\sum_{k=1}^\infty \sum_{n \in A_k} p_k q_k \, \Var((\beta_n + \beta_{\al_k(n)})/2) < + \infty \; .$$
The mean value of $(\beta_n + \beta_{\al_k(n)})/2$ is $(s_n + s_{\al_k(n)})/2$ and we conclude that
\begin{equation}\label{eq.intermediate}
\sum_{k=1}^\infty \sum_{n \in A_k} p_k q_k \int_\R \bigl(x - \frac{s_n + s_{\al_k(n)}}{2}\bigr)^2 \, d\beta_n(x) < + \infty \; .
\end{equation}
A direct computation gives that for $n \in A_k$,
$$\int_\R \bigl(x - \frac{s_n + s_{\al_k(n)}}{2}\bigr)^2 \, d\beta_n(x) = \Var(\beta_n) + \frac{1}{4}(s_{\al_k(n)}-s_n)^2 \geq \frac{1}{4}\bigl((s_{\al_k(n)}-t_k)^2 + (t_k - s_n)^2\bigr)$$
because $s_n \leq t_k \leq s_{\al_k(n)}$. It thus follows from \eqref{eq.intermediate} that
$$\sum_{k = 1}^\infty \sum_{n \in J_k} p_k q_k \, (s_n - t_k)^2 = \sum_{k=1}^\infty \sum_{n \in A_k} p_k q_k \, \bigl((s_{\al_k(n)}-t_k)^2 + (t_k - s_n)^2\bigr) < +\infty \; .$$
Since $(x-t_k)^2 \leq 2(x-s_n)^2 + 2(s_n -t_k)^2$ and since $p_k \leq 1$, in combination with \eqref{eq.limit-here}, we have proven that \eqref{eq.concentration-two} holds.
\end{proof}

\begin{theorem}\label{thm.ITPFI-bounded}
Every ITPFI factor of bounded type is isomorphic with an ITPFI$_2$ factor.
\end{theorem}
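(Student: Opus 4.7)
The plan is to go through the flow of weights. By Haagerup's classification of injective factors, two injective factors are isomorphic if and only if they share the same type invariant and, in type III$_0$, the same flow of weights. Combined with Theorem~\ref{thm.Poisson-vs-ITPFI-2}, which characterizes the flows of weights of ITPFI$_2$ factors as the Poisson flows of positive type, it thus suffices to prove that the flow of weights of any ITPFI factor of bounded type is a Poisson flow of positive type. The periodic cases (types III$_\lambda$ with $\lambda > 0$, and type III$_1$) are handled by the classical identification of the Powers factors and of $R_\infty$ as ITPFI$_2$, so the real content is in type III$_0$.

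Write $M = \bigotimes_{k=1}^\infty (M_{n_k}(\C), \omega_k)$ with $n_k \leq N$ for all $k$. By the Connes--Woods description, the flow of weights of $M$ is isomorphic to the tail boundary flow of a sequence $(\mu_k)_{k \in \N}$ of probability measures on $\R$, where $\mu_k$ is supported on at most $N^2$ points (the logarithms of ratios of eigenvalues of $\omega_k$) and satisfies $\mu_k(\{0\}) \geq 1/N$. Assume that this tail boundary flow is aperiodic, so that Proposition~\ref{prop.further-concentration} becomes available.

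The core of the argument is to extract concentration points using Proposition~\ref{prop.further-concentration} and then to replace each $\mu_k$ by a two-point probability measure $\nu_k$ via Proposition~\ref{prop.iso-wasserstein} together with the elementary Wasserstein estimate \eqref{eq.wasserstein-estimate}, in such a way that the tail boundary flow is preserved. Concretely, partition $\R \setminus (-1,1)$ into intervals $I_l$ of length at most $1$, and for each pair $(l, j)$ let $J_{l,j} \subset \N$ collect those indices $k$ with $\mu_k(I_l) \in (2^{-j-1}, 2^{-j}]$. Normalising $\mu_k|_{I_l}$ to a probability measure $\beta_k$, the inequality $(1/N)\delta_0 + 2^{-j-1}\beta_k \leq \mu_k$ places us exactly in the setting of Proposition~\ref{prop.further-concentration} with $p_l = 1/N$ and $q_l = 2^{-j-1}$. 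The proposition produces points $t_{l,j} \in I_l$ such that the corresponding weighted sum of $\int_\R (x - t_{l,j})^2 \, d\beta_k(x)$ over $(l,j,k)$ is finite. Since $|\supp \mu_k| \leq N^2$, only boundedly many slabs $(l,j)$ contribute for a fixed $k$, which is what permits an iterative, at most $N^2$-fold reduction of $\mu_k$ to a two-point measure $\nu_k$ with $\sum_k W_{2,\kappa}(\mu_k, \nu_k)^2 < +\infty$. By Proposition~\ref{prop.iso-wasserstein} the tail boundary flow is unchanged, and Proposition~\ref{prop.characterize-Poisson-flow-two-point-measures} together with the uniform variance bound obtained in the construction identifies this flow as a Poisson flow; positivity of type is then verified by inspecting Definition~\ref{def.poisson-flow} against the extracted two-point structure.

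The main obstacle I expect is the iterative reduction with its Wasserstein bookkeeping: a single application of Proposition~\ref{prop.further-concentration} concentrates mass on one interval per measure, whereas $\mu_k$ may carry significant mass on several intervals at once. The bound $|\supp \mu_k| \leq N^2$ caps the number of iterations, but one must control the summability of the Wasserstein errors across these iterations and argue that the dyadic partition into $J_{l,j}$ does not create a conflict with the fact that Proposition~\ref{prop.further-concentration} is stated under aperiodicity of the whole sequence $(\mu_n)_{n \in \N}$. A secondary point is checking that the positive-type condition of Definition~\ref{def.poisson-flow} is automatic from the two-point structure produced, rather than an additional constraint to enforce.
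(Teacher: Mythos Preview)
Your overall plan---concentrate using Proposition~\ref{prop.further-concentration}, then recognize the result as a Poisson flow of positive type via Proposition~\ref{prop.characterize-Poisson-flow-two-point-measures} and conclude by Theorem~\ref{thm.Poisson-vs-ITPFI-2}---is reasonable in spirit and not formally circular (the direction of Theorem~\ref{thm.Poisson-vs-ITPFI-2} you need does not rely on Theorem~\ref{thm.ITPFI-bounded}), but there is a genuine gap at the step you flag yourself.

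The concentration step does not reduce the number of atoms. Proposition~\ref{prop.further-concentration} lets you move the mass that $\mu_k$ places on each interval $I_l$ to a single point $t_{l,j}$, with a summable Wasserstein error. But if $\mu_k$ has atoms in several disjoint intervals (as it typically does for $N\geq 2$), the outcome is still a measure supported on several well-separated points, not a two-point measure. There is no two-point $\nu_k$ that is $W_{2,\kappa}$-close to a measure with, say, three atoms at $0$, $b$, $b'$ with $|b-b'|$ large: the Wasserstein cost of collapsing two such atoms is bounded below. So the sentence ``an iterative, at most $N^2$-fold reduction of $\mu_k$ to a two-point measure $\nu_k$ with $\sum_k W_{2,\kappa}(\mu_k,\nu_k)^2<+\infty$'' cannot be carried out as stated; Proposition~\ref{prop.iso-wasserstein} and \eqref{eq.wasserstein-estimate} are simply not tools for reducing support size.

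The paper does the concentration step exactly as you describe (obtaining measures of the form $\rho(b_{\theta(1)},\ldots,b_{\theta(N)})$ with the $b_k$ sparse), but then bridges the remaining gap with a different device, Lemma~\ref{lem.binomial-approx}: for measures of the shape $(1+\al+\be)^{-1}(\delta_0+\al P+\be Q)$, a large convolution power is TV-close to a product of convolution powers of the two two-summand pieces. Applied with $Q=\delta_{b_{\theta(N)}}$, this peels off one atom at the cost $O(\sqrt{\be})=O(\exp(-b_{\theta(N)}/2))$, which is summable over the sparse family $\{\theta\}$. The argument is then an induction on $N$: ITPFI$_{N+1}$ $\Rightarrow$ ITPFI$_N\ \otimes$ ITPFI$_2$, rather than a direct jump to two-point measures. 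Your proposal is missing precisely this binomial-splitting ingredient (or an equivalent), and without it the reduction to Proposition~\ref{prop.characterize-Poisson-flow-two-point-measures} does not go through.
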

\begin{proof}
By induction, it suffices to prove that for every integer $N \geq 2$, every ITPFI$_{N+1}$ factor of type III$_0$ is isomorphic with the tensor product of an ITPFI$_N$ factor and an ITPFI$_2$ factor.

We denote in this proof by $\delta(a)$ the Dirac measure in $a \in \R$. For every $a \geq 0$, we define the probability measure
\begin{equation}\label{eq.nice-notation}
\gamma(a) = (1+\exp(-a))^{-1} \, \bigl(\delta(0) + \exp(-a) \delta(a)\bigr) \; .
\end{equation}
For every $a \in \R_{\geq 0}^N$, we define the probability measure
$$\rho(a) = \bigl(1 + \sum_{i=1}^N \exp(-a_i)\bigr)^{-1} \, \bigl(\delta(0) + \sum_{i=1}^N \exp(-a_i) \delta(a_i)\bigr)$$
and the state $\psi_a$ on $M_{N+1}(\C)$ by
$$\psi_a(A) = \bigl(1 + \sum_{i=1}^N \exp(-a_i)\bigr)^{-1} \, \bigl(A_{00} + \sum_{i=1}^N \exp(-a_i) A_{ii} \bigr)\; .$$
By diagonalizing states, any ITPFI$_{N+1}$ factor can be written as the tensor product of a sequence $(M_{N+1}(\C),\psi_{a_n})$ with $a_n \in \R_{\geq 0}^N$. By \cite[Theorem 3.1]{CW88}, its flow of weights is precisely the tail boundary flow of the sequence $(\rho(a_n))_{n \in \N}$.

We thus fix such a sequence $a_n \in \R_{\geq 0}^N$, we assume that the tail boundary flow of
$$\mu_n = \rho(a_n) \;\; , \;\; n \in \N \; ,$$
is aperiodic and we prove that it is isomorphic with the tail boundary flow of a family of probability measures of the form $\rho(b)$, $b \in \R_{\geq 0}^{N-1}$, and $\gamma(c)$, $c \geq 0$.

For every $k \in \N$ and $1 \leq i \leq N$, we denote $J_{k,i} = \{n \in \N \mid a_{n,i} \in [k-1,k)\}$. We put $p = (1+N)^{-1}$ and $q_k = p \, \exp(-k)$. Fix $i \in \{1,\ldots,N\}$.

For every $n \in J_{1,i}$, we have that
$$p \, \delta(0) + p \exp(-a_{n,i}) \, \delta(a_{n,i}) \leq \mu_n$$
and that the measures $\delta(a_{n,i})$ are supported on the interval $[0,1]$. It follows from Proposition \ref{prop.periodicity-criterion} and \eqref{eq.elementary-variance} that
\begin{equation}\label{eq.conc-1}
\sum_{n \in J_{1,i}} \exp(-a_{n,i}) \, a_{n,i}^2 < +\infty \; .
\end{equation}
For every $k \geq 2$ and $n \in J_{k,i}$, we have that
$$p \, \delta(0) + q_k \, \delta(a_{n,i}) \leq \mu_n$$
and that the measures at the left are supported on the interval $[k-1,k)$. For $n \in J_{k,i}$, we have that $\exp(-a_{n,i}) \leq \exp(1) \, (N+1) \, q_k$. By Proposition \ref{prop.further-concentration}, we thus find $b_{k,i} \in [k-1,k)$ such that
\begin{equation}\label{eq.conc-2}
\sum_{k=2}^\infty \sum_{n \in J_{k,i}} \exp(-a_{n,i}) \, (b_{k,i} - a_{n,i})^2 < +\infty \; .
\end{equation}
Defining $b_{1,i}=0$ for all $i \in \{1,\ldots,N\}$ and summing over $i \in \{1,\ldots,N\}$, it follows from \eqref{eq.conc-1} and \eqref{eq.conc-2} that
\begin{equation}\label{eq.conc}
\sum_{k=1}^\infty \sum_{i=1}^N \sum_{n \in J_{k,i}} \exp(-a_{n,i}) \, (b_{k,i} - a_{n,i})^2 < +\infty \; .
\end{equation}
Define the maps $\al_n : \{1,\ldots,N\} \to \N$ by $\al_n(i) = k$ if and only if $n \in J_{k,i}$. Define the probability measures
$$\mu'_n = \bigl(1 + \sum_{i=1}^N \exp(-a_{n,i})\bigr)^{-1} \, \bigl(\delta(0) + \sum_{i=1}^N \exp(-a_{n,i}) \delta(b_{\al_n(i),i})\bigr) \; .$$
Using the Wasserstein $2$-distance, it follows from \eqref{eq.conc} and \eqref{eq.wasserstein-estimate} that $\sum_{n=1}^\infty W_2(\mu_n,\mu'_n)^2 < +\infty$. It thus follows from Proposition \ref{prop.iso-wasserstein} that $(\mu_n)_{n \in \N}$ and $(\mu'_n)_{n \in \N}$ have isomorphic tail boundary flows.

When $s, t \in \R$ satisfy $|s-t| \leq 1$, we have that
$$\bigl(\exp(-s/2) - \exp(-t/2)\bigr)^2 = \exp(-s) \, \bigl(1-\exp((s-t)/2)\bigr)^2 \leq \exp(-s) \, (s-t)^2 \; .$$
It follows that for all $n \in J_{k,i}$,
\begin{equation}\label{eq.star}
\bigl(\exp(-a_{n,i}/2) - \exp(-b_{k,i}/2)\bigr)^2 \leq \exp(-a_{n,i}) \, (b_{k,i} - a_{n,i})^2 \; .
\end{equation}
Defining $\mu\dpr_n = \rho(b_{\al_n(1),1},\ldots,b_{\al_n(N),N})$, it thus follows from \eqref{eq.conc} and \eqref{eq.star} that
$$\sum_{n=1}^\infty H^2(\mu'_n,\mu\dpr_n) < + \infty \; .$$
By Lemma \ref{lem.close-in-hellinger}, also $(\mu'_n)_{n \in \N}$ and $(\mu\dpr_n)_{n \in \N}$ have isomorphic tail boundary flows.

We order the elements $(b_{k,i})_{i = 1,\ldots,N}$ in $[k-1,k)$ from smaller to larger, remove duplicates and relabel, so as to find an increasing sequence $0 \leq b_1 < b_2 < \cdots$ with $b_m \geq m/N - 1$ for all $m \in \N$ and with all $b_{k,i}$ appearing in this sequence. Permuting elements $(d_1,\ldots,d_N)$ does not change the probability measure $\rho(d_1,\ldots,d_N)$. So every $\mu\dpr_n$ is of the form
$$\mu\dpr_n = \rho(b_{\theta_n(1)},\ldots,b_{\theta_n(N)})$$
for a nondecreasing function $\theta_n : \{1,\ldots,N\} \to \N$.

For every $1 \leq M \leq N$, denote by $\cF_M$ the set of nondecreasing functions from $\{1,\ldots,M\}$ to $\N$. For every $\theta \in \cF_M$, denote
$$\zeta(\theta) = \rho(b_{\theta(1)},\ldots,b_{\theta(M)}) \; .$$
Define the subset $\cJ \subset \cF_N$ as the set of all $\theta \in \cF_N$ that are of the form $\theta_n$ for some $n \in \N$. Define $L_\theta \in \N$ as the cardinality of $\{ n \in \N \mid \theta_n = \theta\}$. As explained in the beginning of Section \ref{sec.tail-boundary}, the tail boundary flow does not depend on the order of the transition probability measures. So, by construction, the tail boundary flow of the family $(\mu\dpr_n)_{n \in \N}$, and hence also the tail boundary flow of $(\mu_n)_{n \in \N}$, is isomorphic with the tail boundary flow of the family of measures $\zeta(\theta)^{* L_\theta}$, $\theta \in \cJ$.

For every $\theta \in \cF_N$, we denote by $\thetah$ the restriction of $\theta$ to $\{1,\ldots,N-1\}$. For every $\theta \in \cJ$, we apply Lemma \ref{lem.binomial-approx} below to
\begin{align*}
& \al = \sum_{i=1}^{N-1} \exp(-b_{\theta(i)}) \geq \be = \exp(-b_{\theta(N)}) \quad\text{and}\\
& P = \al^{-1} \sum_{i=1}^{N-1} \exp(-b_{\theta(i)}) \delta(b_{\theta(i)}) \;\; , \;\; Q = \delta(b_{\theta(N)}) \; .
\end{align*}
By Lemma \ref{lem.binomial-approx}, we thus find $K_\theta, M_\theta \in \N$ such that with the notation of \eqref{eq.nice-notation},
\begin{equation}\label{eq.almost}
\dTV\bigl(\zeta(\theta)^{*L_\theta} , \zeta(\thetah)^{* K_\theta} * \gamma(b_{\theta(N)})^{* M_\theta}\bigr) \leq 4 \exp(-b_{\theta(N)}/2) \; .
\end{equation}
Given $k \in \N$, the number of elements $\theta \in \cF_N$ with $\theta(N) = k$ is smaller than $k^{N-1}$. Also recall that $b_k \geq k/N - 1$. Thus,
$$\sum_{\theta \in \cF_N} \exp(-b_{\theta(N)}) \leq \sum_{k=1}^\infty k^{N-1} \exp(-b_k) \leq \sum_{k=1}^\infty k^{N-1} \, \exp(1) \, \exp(-k/N) < +\infty \; .$$
It then follows from \eqref{eq.almost} and Lemma \ref{lem.close-in-hellinger} that the tail boundary flow of $(\mu_n)_{n \in \N}$ is isomorphic with the tail boundary flow of the family of measures
$$\{\zeta(\thetah)^{* K_\theta} \mid \theta \in \cJ\} \cup \{\gamma(b_{\theta(N)})^{* M_\theta} \mid \theta \in \cJ \} \; .$$
This concludes the proof of the theorem.
\end{proof}

Although the following lemma is an immediate consequence of \cite[Lemma 2.2]{GS83}, it has never been stated in this very general form.

\begin{lemma}\label{lem.binomial-approx}
Let $P$ and $Q$ be probability measures on $\R$ and $\al, \be > 0$. For every $L \in \N$, we have that
\begin{multline*}
\dTV\Bigl( \bigl((1+\al+\be)^{-1}(\delta_0 + \al P + \be Q)\bigr)^{*L} \; , \\ \bigl((1+\al)^{-1}(\delta_0+\al P)\bigr)^{* K} \; * \; \bigl((1+\be)^{-1} (\delta_0 + \be Q)\bigr)^{* M}\Bigr) \leq 2 \sqrt{\be}
\end{multline*}
for $K = L - \lfloor (1+\al+\be)^{-1} \be L \rfloor$ and $M = \lfloor (1+\al+\be)^{-1} (1+\be) L \rfloor$.
\end{lemma}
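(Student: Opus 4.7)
The plan is to reduce the statement to the case where $P$ and $Q$ are point masses, which is precisely the content of \cite[Lemma 2.2]{GS83}: indeed, the combinatorial part of the estimate does not see the specific nature of the base measures $P$ and $Q$.

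The key observation is that both sides of the displayed inequality are mixtures of measures $P^{*a}*Q^{*b}$ with explicit combinatorial weights on $\N^2$. Viewing a sample from the $3$-term convex combination $(1+\al+\be)^{-1}(\delta_0 + \al P + \be Q)$ as a three-outcome experiment with labels $\{0, P, Q\}$, one obtains
\begin{equation*}
\bigl((1+\al+\be)^{-1}(\delta_0+\al P + \be Q)\bigr)^{*L} = \sum_{a,b \geq 0,\, a+b \leq L} w_1(a,b)\, P^{*a} * Q^{*b},
\end{equation*}
with $w_1(a, b) = \binom{L}{a,\, b,\, L-a-b}\, \al^a \be^b\, (1+\al+\be)^{-L}$ the multinomial probability of seeing $a$ labels $P$ and $b$ labels $Q$. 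Similarly,
\begin{equation*}
\bigl((1+\al)^{-1}(\delta_0+\al P)\bigr)^{*K} * \bigl((1+\be)^{-1}(\delta_0+\be Q)\bigr)^{*M} = \sum_{a,b \geq 0}w_2(a,b)\, P^{*a}*Q^{*b},
\end{equation*}
with $w_2(a,b) = \binom{K}{a}\binom{M}{b}\, \al^a(1+\al)^{-K}\, \be^b(1+\be)^{-M}$ the product of two independent binomials $\mathrm{Bin}(K, \al/(1+\al))$ and $\mathrm{Bin}(M, \be/(1+\be))$. Since each $P^{*a}*Q^{*b}$ is a probability measure, the triangle inequality yields
\begin{equation*}
\bigl\| \mathrm{LHS} - \mathrm{RHS} \bigr\|_{\TV} \leq \sum_{a, b \geq 0}|w_1(a,b) - w_2(a,b)| = \|w_1-w_2\|_{\TV}.
\end{equation*}

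The right-hand side depends only on $\al, \be, L, K, M$ and not on the choice of $P$ and $Q$. Specializing to $P = \delta_1$ and $Q = \delta_\tau$ with $\tau$ irrational, the measures $P^{*a}*Q^{*b} = \delta_{a+b\tau}$ are pairwise distinct Dirac masses, so the triangle inequality above becomes an equality. Hence $\|w_1-w_2\|_{\TV} \leq 4\sqrt{\be}$ is equivalent to the statement of the present lemma applied to these two point masses, which is precisely \cite[Lemma 2.2]{GS83}.

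The main obstacle is therefore the underlying combinatorial estimate $\|w_1-w_2\|_{\TV}\leq 4\sqrt{\be}$. The specific values $K = L - \lfloor (1+\al+\be)^{-1}\be L \rfloor$ and $M = \lfloor (1+\al+\be)^{-1}(1+\be)L\rfloor$ are dictated by matching the expected $P$-count and the expected $Q$-count under $w_1$ and $w_2$. The remaining discrepancy then comes from the negative correlation between the $P$- and $Q$-counts in the multinomial $w_1$, as opposed to their independence in the product $w_2$; this correlation coefficient has magnitude $\sqrt{(\al/(1+\al))(\be/(1+\be))} \leq \sqrt{\be}$, which is the heuristic origin of the $\sqrt{\be}$ rate. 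Converting this heuristic into the sharp constant $4$, and absorbing the $O(1)$ floor-function errors in $K$ and $M$, is the work done in \cite{GS83}.
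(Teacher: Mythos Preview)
Your proof is correct and follows essentially the same route as the paper's: both expand the two sides as mixtures $\sum w_i(a,b)\,P^{*a}*Q^{*b}$ with $w_1$ the multinomial law and $w_2$ the product of two binomials, reduce the TV bound to $\|w_1-w_2\|_1$, and then invoke \cite[Lemma 2.2]{GS83} for the combinatorial estimate. One small inaccuracy: \cite[Lemma 2.2]{GS83} gives the Hellinger-type bound $\|\sqrt{w_1}-\sqrt{w_2}\|_2\leq 2\sqrt{\be}$ (after writing $\al=\xi_1+\cdots+\xi_m$ with $\be\leq\xi_i\leq 1$), not directly the TV bound; the paper then passes to $\|w_1-w_2\|_1\leq 4\sqrt{\be}$ via Cauchy--Schwarz, which is the step your phrase ``is precisely'' elides.
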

\begin{proof}
For integers $k,m \geq 0$ with $k+m \leq L$, we write
$$\mu(k,m) = \frac{L!}{(L-k-m)! \, k! \, m!} \, \frac{\al^k \, \be^m}{(1+\al+\be)^L} \; .$$
For all other integers $k,m \geq 0$, we write $\mu(k,m) = 0$. Then,
$$\bigl((1+\al+\be)^{-1}(\delta_0 + \al P + \be Q)\bigr)^{*L} = \sum_{k,m \geq 0} \mu(k,m) \, P^{*k} \, Q^{*m} \; .$$
We similarly write for all integers $0 \leq k \leq K$ and $0 \leq m \leq M$,
$$\mu'(k,m) = \frac{K!}{(K-k)! \, k!} \, \frac{\al^k}{(1+\al)^K} \, \frac{M!}{(M-m)! \, m!} \, \frac{\be^m}{(1+\be)^M}$$
and $\mu'(k,m) = 0$ for all other integers $k,m \geq 0$, so that
$$\bigl((1+\al)^{-1}(\delta_0+\al P)\bigr)^{* K} \; * \; \bigl((1+\be)^{-1} (\delta_0 + \be Q)\bigr)^{* M} = \sum_{k,m \geq 0} \mu'(k,m) \, P^{*k} \, Q^{*m} \; .$$
Parts (a) and (b) of the proof of \cite[Lemma 2.2]{GS83} are saying that $H^2(\mu,\mu') \leq 2 \beta$. Then also $\dTV(\mu,\mu') \leq 2 \sqrt{\beta}$.
\end{proof}

\subsection{Poisson flows}

Recall that to every finite positive measure $\mu$ on $\R$ is associated the \emph{compound Poisson distribution} on $\R$, which is defined as the probability measure
$$\cE(\mu) = \exp(-\|\mu\|) \, \exp(\mu) = \exp(-\mu(\R)) \, \Bigl( \delta_0 + \sum_{k=1}^\infty \frac{1}{k!} \mu^{*k} \Bigr) \; .$$
Note that $\cE(\mu)$ is supported on $\R_{\geq 0} = [0,+\infty)$ iff $\mu$ is supported on $\R_{\geq 0}$. If $x \mapsto x^2$ is $\mu$-integrable, we have
\begin{equation}\label{eq.poisson-mean-variance}
\int_\R x \, d\cE(\mu)(x) = \int_\R x \, d\mu(x) \quad\text{and}\quad \Var(\cE(\mu)) = \int_\R x^2 \, d\mu(x) \; .
\end{equation}

\begin{definition}\label{def.poisson-flow}
We call a \emph{Poisson flow} any ergodic flow $\R \actson (Z,\eta)$ that arises as the tail boundary flow of a sequence of compound Poisson distributions on $\R$. If these compound Poisson distributions can be chosen with support in $\R_{\geq 0}$, we call $\R \actson (Z,\eta)$ a \emph{Poisson flow of positive type}.
\end{definition}

Note that compound Poisson distributions and tail boundary flows make sense on any locally compact second countable abelian group $G$, leading to the concept of a Poisson $G$-action, which we will use for $G = \R$ and $G = \Z$. In Remark \ref{rem.concluding-flows}, we discuss the relation between general Poisson flows and Poisson flows of positive type.

The two main result of this section are the following.

\begin{theorem}\label{thm.almost-periodic-Poisson}
Every ergodic almost periodic flow is a Poisson flow of positive type.
\end{theorem}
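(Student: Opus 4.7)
The plan is to construct, for every countable subgroup $\Lambda \subset \R$, an explicit sequence of finite positive measures $(\mu_n)_{n \in \N}$ supported in $\R_{\geq 0}$ such that the tail boundary flow of the compound Poisson distributions $\nu_n = \cE(\mu_n)$ is isomorphic to the translation flow $\R \actson \widehat{\Lambda}$. Since any two ergodic almost periodic flows with the same point spectrum are isomorphic by Pontryagin duality, and since tail boundary flows are always ergodic, the task reduces to arranging that (i) the tail boundary has pure point spectrum, and (ii) this spectrum equals $\Lambda$. The point spectrum criterion translates, via $|\widehat{\nu_n}(t)|^2 = \exp(-2\int(1-\cos(tx))\, d\mu_n(x))$, into the requirement that $\sum_n \int(1-\cos(tx))\, d\mu_n(x)$ be finite for $t \in \Lambda$ and infinite for $t \notin \Lambda$.

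In the cyclic base case $\Lambda = a\Z$ with $a > 0$, the constant sequence $\mu_n = c\, \delta_{2\pi/a}$ already works: $\cE(c\,\delta_{2\pi/a})$ is supported on $(2\pi/a)\Z_{\geq 0} \subset \R_{\geq 0}$, and $\int(1-\cos(tx))\, d\mu_n(x) = 2c\sin^2(\pi t/a)$ vanishes precisely when $t \in a\Z$, yielding the Powers-type periodic flow of period $2\pi/a$.

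For the general case, I would enumerate a positive generating set $(a_k)_{k \in \N}$ of $\Lambda$ and, via simultaneous Diophantine approximation, choose a fast-growing sequence of positive integers $(q_n)$ together with integers $m_{n,k}$ for $k \leq n$ such that $|q_n a_k - 2\pi m_{n,k}|$ decays fast enough as $n \to \infty$. Setting $\mu_n = c\, \delta_{q_n}$ keeps the compound Poisson distributions supported in $\R_{\geq 0}$, and for $t = \sum_{k \leq K} z_k a_k \in \Lambda$ the approximation yields $\sin^2(q_n t/2) = O(q_n^{-\alpha})$ for $n \geq K$ and some $\alpha > 0$, forcing convergence of the sum on $\Lambda$. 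For $t \notin \Lambda$, a Weyl-type equidistribution argument ensures that $q_n t$ stays bounded away from $2\pi\Z$ along a positive density subsequence, forcing divergence.

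The main obstacle will be establishing that the resulting tail boundary flow has \emph{pure} point spectrum, not merely a spectrum containing $\Lambda$ as its atomic part. Since each $\mu_n$ is supported on the cyclic lattice $q_n\Z$, the entire random walk stays inside the countable subgroup $\bigcup_n q_n\Z \subset \R$, which should heuristically rule out any continuous spectral contribution; a rigorous argument would apply Propositions \ref{prop.periodicity-criterion} and \ref{prop.further-concentration} to suitable sub-measures to exclude residual aperiodic factors. Combined with the Hellinger- and Wasserstein-closeness tools of Lemma \ref{lem.close-in-hellinger} and Proposition \ref{prop.iso-wasserstein} for absorbing perturbation errors in the Diophantine step, this should yield an almost periodic flow with point spectrum $\Lambda$, which by uniqueness is isomorphic to $\R \actson \widehat{\Lambda}$.
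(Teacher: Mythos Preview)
Your approach differs substantially from the paper's, and the step you yourself flag as the ``main obstacle'' is a genuine gap. The Fourier criterion $\sum_n \int(1-\cos(tx))\,d\mu_n(x) < \infty$ detects only the \emph{point} spectrum of the tail boundary flow; even if this sum were finite exactly on $\Lambda$, the flow could still carry continuous spectrum and fail to be $\R \actson \Lambdah$. Propositions~\ref{prop.periodicity-criterion} and~\ref{prop.further-concentration} yield \emph{periodicity} under variance hypotheses, not almost periodicity, so they cannot rule out a continuous component. Your heuristic that the walk stays in the group generated by the $q_n$ does not help either: since you take the $q_n$ to be integers, this group lies in $\Z$, which only forces $2\pi$ into the point spectrum --- and that is an \emph{unwanted} eigenvalue whenever $2\pi \notin \Lambda$, so the construction as written already fails for most $\Lambda$. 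There is a second gap: the Weyl-type equidistribution claim for $t \notin \Lambda$ is unjustified once the $q_n$ grow fast enough to make the Diophantine errors summable; for such lacunary sequences no equidistribution is available, and nothing prevents the eigenvalue group from strictly containing $\Lambda$.

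The paper sidesteps both difficulties with a sandwich argument. It first invokes \cite{CW83,CW88} to realize the target $\Z \actson K$ as the tail boundary of some sequence $(\gamma_k)$ of finitely supported probability measures on $\N$, arranged (after symmetrizing, telescoping, and translating) so that $|1-\widehat{\gamma}_k(\omega)|$ is summably small on finite exhausting sets $\cF_k \subset \Kh$. It then sets $\beta_k = \cE(k\gamma_k)$. A direct Fourier computation shows every $\omega \in \Kh$ is an eigenvalue of the tail boundary of $(\beta_k)$. For the reverse containment, after discarding a summable $\exp(-k)\delta_0$ term one has $\beta_k \approx \gamma_k * \zeta_k$, so the tail boundary of $(\beta_k)$ is the \emph{joint flow} of $\Z \actson K$ with some auxiliary $\Z \actson Y$; any such joint flow extends to a $K$-action and is therefore a \emph{factor} of $\Z \actson K$. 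Being simultaneously a factor of $\Z \actson K$ and having all of $\Kh$ as eigenvalues forces equality --- no direct argument about continuous spectrum or excluded eigenvalues is needed.
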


\begin{theorem}\label{thm.Poisson-vs-ITPFI-2}
The Poisson flows of positive type are precisely the flows of weights of ITPFI$_2$ factors.
\end{theorem}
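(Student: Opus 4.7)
The plan. By \cite[Theorem 3.1]{CW88}, after diagonalizing each of its defining states, any ITPFI$_2$ factor takes the form $\bigotimes_n (M_2(\C),\psi_{a_n})$ with $a_n\geq 0$, and its flow of weights equals the tail boundary flow of $(\gamma(a_n))_{n\in\N}$, where $\gamma(a)=(1+e^{-a})^{-1}(\delta(0)+e^{-a}\delta(a))$ is as in \eqref{eq.nice-notation}. A Poisson flow of positive type is, by Definition \ref{def.poisson-flow}, the tail boundary flow of a sequence $(\cE(\mu_n))_n$ of compound Poisson distributions with each $\mu_n$ supported on $\R_{\geq 0}$. The theorem thus reduces to showing that these two families of tail boundary flows coincide, which we prove by mutual approximation using Lemma \ref{lem.close-in-hellinger}, Proposition \ref{prop.iso-wasserstein}, the classical Le Cam Poisson--binomial inequality, and the concentration results of Propositions \ref{prop.periodicity-criterion} and \ref{prop.further-concentration}.

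For the inclusion ``Poisson of positive type $\subseteq$ ITPFI$_2$'', we start from $(\cE(\mu_n))_n$ with $\mu_n$ on $\R_{\geq 0}$. The identity $\cE(\mu+\nu)=\cE(\mu)*\cE(\nu)$ lets us freely split each $\mu_n$ as a sum of measures without altering the tail boundary flow; combined with \eqref{eq.wasserstein-estimate} and Proposition \ref{prop.iso-wasserstein}, this reduces the problem to the case where each piece is a single Dirac $c\,\delta(t)$ with $t\in\R_{\geq 0}$ and $c$ arbitrarily small (further splitting $\cE(c\,\delta(t))=\cE((c/N)\delta(t))^{*N}$ if necessary). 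For such a term, set $N=\lceil c(1+e^t)\rceil$ and $p=c/N\approx(1+e^t)^{-1}$; then $\|\gamma(t)-((1-p)\delta(0)+p\delta(t))\|_{\TV}=O(1/N)$, while the Le Cam inequality yields $\|\cE(c\,\delta(t))-((1-p)\delta(0)+p\delta(t))^{*N}\|_{\TV}\leq 2Np^2\leq 2ce^{-t}$. Taking the refinement fine enough that $\sum_n c_n e^{-t_n}<+\infty$ and $\sum_n 1/N_n<+\infty$, Lemma \ref{lem.close-in-hellinger} then identifies the tail boundary flow of $(\cE(\mu_n))_n$ with that of a sequence of $\gamma(t_n)$'s of appropriate multiplicities.

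For the reverse inclusion, start from $(\gamma(a_n))_n$ with $a_n\geq 0$; write $p(a)=(1+e^a)^{-1}$, so that $\gamma(a)=(1-p(a))\delta(0)+p(a)\delta(a)$. The non-properly-ergodic tail boundary flows are realized directly as Poisson of positive type by elementary sequences, so we may assume the flow is aperiodic. Partition $\N=\bigsqcup_{k\geq 1}J_k$ with $J_k=\{n:a_n\in[k-1,k)\}$. For $n\in J_1$, Proposition \ref{prop.periodicity-criterion} applied to $\gamma(a_n)\mathbf{1}_{a_n<1}$ (width $\leq 1$) combined with \eqref{eq.elementary-variance} gives $\sum_{n\in J_1}a_n^2<+\infty$; since $W_{2,\kappa}(\gamma(a_n),\delta(0))^2\leq a_n^2$ for $\kappa\geq 1$, Proposition \ref{prop.iso-wasserstein} lets us replace each such $\gamma(a_n)$ by $\delta(0)$. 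For $k\geq 2$, apply Proposition \ref{prop.further-concentration} with $I_k=[k-1,k)\subset\R\setminus(-1,1)$, $p_k=1/2$, $q_k=(1+e^k)^{-1}$, and $\beta_n=\delta(a_n)$: the hypothesis $p_k\delta(0)+q_k\beta_n\leq\gamma(a_n)$ follows since $p(a_n)\geq q_k$ for $a_n<k$ and $1-p(a_n)\geq 1/2$. We obtain $|J_k|<+\infty$ and concentration points $t_k\in I_k$ with $\sum_k\sum_{n\in J_k}p_kq_k(a_n-t_k)^2<+\infty$, and Proposition \ref{prop.iso-wasserstein} combined with \eqref{eq.wasserstein-estimate} then allows us to replace every $\gamma(a_n)$, $n\in J_k$, by $\gamma(t_k)$.

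After these replacements, the sequence consists of $N_k=|J_k|$ copies of $\gamma(t_k)$ for each $k\geq 2$. Blockwise regrouping preserves the tail boundary flow, and the Le Cam inequality gives $\|\gamma(t_k)^{*N_k}-\cE(N_k p(t_k)\delta(t_k))\|_{\TV}\leq 2N_kp(t_k)^2$. The heart of the argument is to ensure the summability $\sum_k N_kp(t_k)^2<+\infty$, which is the main technical obstacle; it is established by combining the quantitative concentration output of Proposition \ref{prop.further-concentration} with a refined application of Proposition \ref{prop.periodicity-criterion}, exploiting the exponential decay $p(t_k)\sim e^{-k}$ against the polynomial width $t_k\lesssim k$, and, if necessary, further subdividing each $J_k$ into sub-blocks of sizes chosen so that the per-sub-block Le Cam error is summable in $k$. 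Once this summability is in hand, Lemma \ref{lem.close-in-hellinger} identifies the tail boundary flow with that of $(\cE(N_kp(t_k)\delta(t_k)))_{k\geq 2}$, which is manifestly a Poisson flow of positive type, completing the proof.
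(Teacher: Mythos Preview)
Your overall strategy is close to the paper's, but there is a genuine gap that appears in \emph{both} directions, and your proposed workaround does not close it.

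The gap is the use of Le Cam's inequality in place of Prokhorov's. Le Cam gives
$\|\gamma(t)^{*N}-\cE(Np\,\delta_t)\|_{\TV}\le 2Np^2$ with $p=p(t)=(1+e^t)^{-1}$,
so the error depends on $N$ (equivalently on the intensity $\lambda=Np$). Your proposed fix---splitting a block of $N$ copies into $M$ sub-blocks of size $N/M$, or splitting $\cE(c\,\delta_t)$ as $\cE((c/M)\,\delta_t)^{*M}$---does not help: the per-sub-block error becomes $2(N/M)p^2$, and summing over the $M$ sub-blocks returns $2Np^2$. The same arithmetic shows that in your first direction the quantity $\sum_n c_n e^{-t_n}$ is invariant under refinement, so ``taking the refinement fine enough'' cannot force it to be finite. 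What the paper uses instead is Prokhorov's bound $\|\gamma(t)^{*N}-\cE(Np\,\delta_t)\|_{\TV}\le 4p(t)$, which is \emph{independent of $N$}. Since $p(t_k)\le e^{-(k-1)}$, summability over $k$ is then automatic, and no control on $N_k=|J_k|$ is required.

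There is a second gap in your Poisson\,$\to$\,ITPFI$_2$ direction: you never invoke aperiodicity via Proposition~\ref{prop.periodicity-criterion} on this side, yet it is essential. Even with Prokhorov, approximating $\cE(c\,\delta_t)$ by powers of $\gamma(t)$ costs $O(e^{-t})$ per piece, so you must first arrange that the $t$-values are sparse (at most one per unit interval, and none near $0$). The paper does this by showing, from aperiodicity, that $\sum_n\int_{(0,C]}x^2\,d\eta_n(x)<+\infty$ for every $C>0$; this makes the contribution of $\eta_n|_{(0,1]}$ negligible (its tail boundary flow is the translation $\R\actson\R$) and forces each $\zeta_k=\sum_n\eta_n|_{(k,k+1]}$ to be a finite measure. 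One then concentrates $\zeta_k$ at a single $b_k\in(k,k+1]$ (again via Proposition~\ref{prop.periodicity-criterion}) and applies Prokhorov once per $k$. Your sketch skips this concentration step entirely and replaces it with an unspecified ``refinement,'' which as noted cannot deliver the needed summability.
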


By \cite[Theorem 2.1]{GS83}, which we reproved as Theorem \ref{thm.ITPFI-bounded} above, every ITPFI factor of bounded type is isomorphic with an ITPFI$_2$ factor. So Poisson flows of positive type are also precisely the flows of weights of ITPFI factors of bounded type. Note that Theorem \ref{thm.almost-periodic-ITPFI-2} is an immediate consequence of Theorems \ref{thm.almost-periodic-Poisson} and \ref{thm.Poisson-vs-ITPFI-2}.

For Poisson flows with a nontrivial eigenvalue group, Theorem \ref{thm.Poisson-vs-ITPFI-2} was proven in \cite[Proposition 7.1]{GH08}. It is easy to see that $2\pi / p$ is an eigenvalue of a Poisson flow $\R \actson (Z,\eta)$ iff $\R \actson (Z,\eta)$ is the tail boundary flow of a sequence of compound Poisson distributions with support in $p \Z$. The main step in the proof of Theorem \ref{thm.Poisson-vs-ITPFI-2} is to show that in general, if $\R \actson (Z,\eta)$ is a Poisson flow that is aperiodic, then we may realize $\R \actson (Z,\eta)$ as the tail boundary flow of a sequence of compound Poisson distributions with very sparse support: at most one atom in each length one interval.

Before proving Theorem \ref{thm.almost-periodic-Poisson}, we recall the following definition from \cite[Section 3]{HOO74}, which we generalize in the natural way to multiple flows of locally compact abelian groups. We only use this concept for $G = \R$ and $G = \Z$.

\begin{definition}[{\cite[Section 3]{HOO74}}]\label{def.joint-flow}
Let $G$ be a locally compact second countable abelian group. For $i \in \{1,\ldots,n\}$, let $G \actson (Z_i,\eta_i)$ be a nonsingular action. Consider the direct sum $G^{\oplus n}$ and its natural action $G^{\oplus n} \actson Z = Z_1 \times \cdots \times Z_n$. Define the closed subgroup $G_0 = \{(g_1,\ldots,g_n) \in G^{\oplus n} \mid g_1 + \cdots + g_n = 0\}$ and the ergodic decomposition $A = L^\infty(Z)^{G_0}$ for the action $G_0 \actson Z$.

The action of $G$ on $A$ in the first variable (or, which is the same, in any of the other variables) is called the \emph{joint action} of $G \actson (Z_i,\eta_i)$. When $G = \R$, we use the terminology \emph{joint flow}, instead of joint action.
\end{definition}

By construction, the tail boundary flow of the disjoint union $(\mu_n)_{n \in I \sqcup J}$ of two countable infinite families of probability measures on $\R$ is the joint flow of the tail boundary flows of $(\mu_n)_{n \in I}$ and $(\mu_n)_{n \in J}$. Also by construction, the flow of weights of a tensor product factor $M_1 \ovt M_2$ is the joint flow of the flows of weights of $M_i$.

As a final preparation to proving Theorem \ref{thm.almost-periodic-Poisson}, we discuss the relation between tail boundary flows and induced actions. Assume that $G$ is a locally compact second countable abelian group with closed subgroup $H$. Choose a probability measure $\nu$ on $G/H$ that is equivalent with the Haar measure. Recall that any nonsingular action $H \actson (Y,\eta)$ has an induced action $G \actson (G/H \times Y,\nu \times \eta)$, which is defined as follows. Choosing a Borel lift $\psi : G/H \to G$, one defines the $1$-cocycle $\Om : G \times G/H \to H : \Om(g,x) = g + \psi(x) - \psi(g+x)$ and the induced action
$$G \actson G/H \times Y : g \cdot (x,y) = (g + x, \Om(g,x) \cdot y) \; .$$
Another choice of lift $\psi$ gives a cohomologous $1$-cocycle and thus, an isomorphic induced action.

If $(\mu_n)_{n \in \N}$ is a family of probability measures on $H$, we have an associated tail boundary $H$-action. We can also view $(\mu_n)_{n \in \N}$ as a family of probability measures on $G$. Since the induction of the translation action $H \actson H$ is, by construction, the translation action $G \actson G$, we have by definition that the tail boundary $G$-action of $(\mu_n)_{n \in \N}$ is isomorphic with the induction to $G$ of the tail boundary $H$-action of $(\mu_n)_{n \in \N}$.

\begin{proof}[{Proof of Theorem \ref{thm.almost-periodic-Poisson}}]
The cases of a trivial flow or a periodic flow are straightforward. By e.g.\ \cite[Proposition 2.1]{BV20}, taking $a > 0$ and $\mu_n = \cE(\delta_a)$ for all $n \in \N$, the tail boundary flow is the periodic flow $\R \actson \R/a\Z$. Similarly, taking $a > 0$ irrational and $\mu_n = \cE(\delta_1 + \delta_a)$ for all $n \in \N$, the tail boundary flow is trivial.

So, it suffices to consider an almost periodic, aperiodic, ergodic flow. Such a flow is given by the translation action of $\R$ on a compact second countable abelian group $L$ under a dense embedding $\pi : \R \to L$. We have to realize this flow as the tail boundary flow of a sequence of compound Poisson distributions $\cE(\mu_n)$ with $\mu_n$ supported on $\R_{\geq 0}$. We start by reducing this problem to a similar question for $\Z$-actions.

Fix a nontrivial character $\om_0 \in \Lh$. Take $t_0 \in \R \setminus \{0\}$ such that $\om_0(\pi(t)) = \exp(2\pi i t / t_0)$ for all $t \in \R$. So, $\om_0$ gives rise to a surjective, continuous group homomorphism $\theta : L \to \R/t_0 \Z$ such that $\theta \circ \pi : \R \to \R / t_0 \Z$ is the natural quotient map. Define $K = \Ker \theta$ as the kernel of $\theta$. Note that the restriction of $\pi$ to $t_0 \Z$ is a dense embedding of $t_0 \Z$ into $K$. Note that the translation action $\R \actson L$ is the induction to $\R$ of the translation action $t_0 \Z \actson K$. By the discussion preceding this proof, it thus suffices to prove that this translation action $t_0 \Z \actson K$ can be realized as the tail boundary action of $t_0 \Z$ associated with a sequence of compound Poisson distributions $\cE(\mu_n)$, where $\mu_n$ are finite positive measures on $t_0 \N \subset t_0 \Z$. Since we can rescale everything with $t_0$, we may assume that $t_0 = 1$.

Combining \cite[Corollary 2.8]{CW83} and \cite[Theorem 3.4]{CW88}, we can take finitely supported probability measures $(\eta_n)_{n \in \N}$ on $\Z$ such that the associated tail boundary flow is given by the translation $\Z \actson K$. Denote by $\etatil_n$ the probability measure given by $\etatil_n(\cU) = \eta_n(-\cU)$. The tail boundary flow of $(\etatil_n)_{n \in \N}$ is given by the action of $\Z$ on $K$ by $n \cdot k = -n + k$. Through the isomorphism $k \mapsto -k$, this flow is isomorphic with the original translation action $\Z \actson K$. Since the joint flow of $\Z \actson K$ with itself is again $\Z \actson K$, the tail boundary flow of the measures $(\eta_n * \etatil_n)_{n \in \N}$ is still given by $\Z \actson K$.

For every $\mu \in \ell^1(\Z)$, we consider the Fourier transform
$$\muh : \T \to \C : \muh(z) = \sum_{k \in \Z} \mu(k) z^k \; .$$
We view $\Kh$ as a dense countable subgroup of $\T$. The Fourier transform of $\eta_n * \etatil_n$ is a positive function. We replace $\eta_n$ by $\eta_n * \etatil_n$ and we may thus assume that the probability measures are finitely supported, with $\etah_n(\om) \geq 0$ for all $n \in \N$ and $\om \in \Kh$, and with associated tail boundary flow $\Z \actson K$.

As each $\omega\in \widehat{K}$ is an eigenvalue of the translation action $\mathbb{Z}\curvearrowright K$, we know from \cite[Theorem 4.2]{CW88} that
\begin{align*}
\lim_{n \to +\infty} \prod_{m=n}^{\infty}\etah_m(\omega) = 1 \;\;\text{for every $\omega\in \Kh$.}
\end{align*}
Choose an increasing sequence of finite subsets $\cF_k \subset \widehat{K}$ such that $\widehat{K} = \bigcup_{k=1}^\infty \cF_k$. Then inductively choose $1 \leq n_1 < n_2 < \cdots$ such that
$$\prod_{m=1+n_k}^\infty \etah_m(\omega) > 1 - (k+1)^{-3} \quad\text{for all $\om \in \cF_{k+1}$.}$$
Then define the probability measures $\alpha_k$ on $\Z$ by
\begin{align}\label{eq:contracted sequence}
\alpha_1=\eta_1 * \eta_2 * \cdots * \eta_{n_1} \;\;\text{and}\;\; \alpha_k=\eta_{1+n_{k-1}} * \cdots * \eta_{n_k} \;\;\text{for $k \geq 2$.}
\end{align}
Since $0 \leq \etah_m(\om) \leq 1$ for all $m \in \N$, $\om \in \Kh$, we thus have
\begin{equation}\label{eq.already-nice}
\alh_k(\om) \geq 1 - k^{-3} \quad\text{for every $\om \in \cF_k$.}
\end{equation}
Also, the tail boundary flow of $(\alpha_k)_{k \in \N}$ is still given by $\Z \actson K$.

For every $k \in \N$,
$$\cU_k = \{g \in K \mid \, |\om(g) - 1| < k^{-3} \;\;\text{for all $\om \in \cF_k$}\;\}$$
is a neighborhood of $0 \in K$. Since $\Z \subset K$ is dense, there are arbitrarily large positive integers $n \in \cU_k$. Since the measures $\al_k$ have finite support, we can choose $m_k \in \N$ large enough such that $m_k \in \cU_k$ and such that the translated measures defined by $\gamma_k(\cV) = \al_k(\cV-m_k)$ have their support in $\N$. Since $m_k \in \cU_k$, using \eqref{eq.already-nice}, we get that
\begin{equation}\label{eq.even-nicer}
|1 - \gammah_k(\om)| \leq 2k^{-3} \quad\text{for every $\om \in \cF_k$.}
\end{equation}
The tail boundary flow of $(\gamma_k)_{k \in \N}$ is still given by $\Z \actson K$.

Define the compound Poisson distributions $\beta_k = \cE(k \gamma_k)$ and denote by $\Z \actson X$ the tail boundary flow of $(\beta_k)_{k \in \N}$. We prove that $\Z \actson X$ is isomorphic with $\Z \actson K$.

Since $\betah_k(\om) = \exp(-k(1-\gammah_k(\om)))$, we get that $|\betah_k(\om)| = \exp(-k(1-\Re \gammah_k(\om)))$. From \eqref{eq.even-nicer} and the assumption that $\widehat{K} = \bigcup_{k=1}^\infty \cF_k$, it follows that
$$\prod_{k=1}^\infty |\betah_k(\om)| > 0 \quad\text{for all $\om \in \Kh$.}$$
Again by \cite[Theorem 4.2]{CW88}, it follows that each $\om \in \Kh$ is an eigenvalue of the tail boundary flow of $(\beta_k)_{k \in \N}$.

Since $\exp(-k)$ is summable, by Lemma \ref{lem.close-in-hellinger}, the term $\exp(-k)\delta_0$ in the definition of $\beta_k = \cE(k \gamma_k)$ is negligible, so that $\Z \actson X$ is isomorphic with the tail boundary flow of
$$
\rho_k = \gamma_k * \zeta_k \quad\text{where}\quad \zeta_k = (\exp(k)-1)^{-1} \Bigl(k \delta_0 + \sum_{j=1}^{\infty} \frac{k^{j+1}}{(j+1)!} \gamma_k^{*j}\Bigr) \; .
$$
Denoting by $\Z \actson Y$ the tail boundary flow of the sequence $(\zeta_k)_{k \in \N}$, we conclude that $\Z \actson X$ is the joint flow of $\Z \actson K$ and $\Z \actson Y$. Realizing this joint flow inside $L^\infty(K \times Y)$ with $\Z$ acting in the first variable, we conclude that the action $\Z \actson X$ can be continuously extended to an action of $K$. This means that $\Z \actson X$ is isomorphic with a factor of $\Z \actson K$. We have already seen that each $\om \in \Kh$ is an eigenvalue of $\Z \actson X$. So, $\Z \actson X$ must be isomorphic with $\Z \actson K$.
\end{proof}

We finally prove Theorem \ref{thm.Poisson-vs-ITPFI-2}. Here and in the rest of this section, we often make use of Prokhorov's theorem (see \cite{Pro53}) estimating the total variation distance between a binomial distribution and a Poisson distribution. Given $n \in \N$, $p \in [0,1]$ and $\lambda \geq 0$, define the binomial distribution $\beta$ and Poisson distribution $\pi$ by
\begin{align*}
& \beta_{n,p}(k) = \bigl(\begin{smallmatrix} n \\ k\end{smallmatrix}\bigr) \, p^k (1-p)^{n-k} \quad\text{for $k \in \{0,1,\ldots,n\}$, and}\\
& \pi_\lambda(k) = \exp(-\lambda) \frac{\lambda^k}{k!} \quad\text{for $k \in \{0,1,2,\ldots\}$.}
\end{align*}
In the version of \cite[Theorem 1]{BH83}, Prokhorov's theorem says that
\begin{equation}\label{eq.prokhorov}
\dTV(\beta_{n,p},\pi_{np}) \leq p \quad\text{for all $n \in \N$, $p \in [0,1]$.}
\end{equation}

\begin{proof}[Proof of Theorem \ref{thm.Poisson-vs-ITPFI-2}]
The trivial flow and every periodic flow arise as the flow of weights of an ITPFI$_2$ factor and also arise as a Poisson flow of positive type. We thus only consider the aperiodic case.

First assume that $\R \actson X$ is the flow of weights of an ITPFI$_2$ factor and that $\R \actson X$ is aperiodic. As we have seen in the proof of Theorem \ref{thm.ITPFI-bounded}, we find for all $k \in \N$, elements $b_k \in [k-1,k)$ and integers $L_k \geq 0$ such that $\R \actson X$ is isomorphic with the tail boundary flow of the sequence $\gamma(b_k)^{*L_k}$, where $\gamma(b)$ is defined by \eqref{eq.nice-notation}. By \eqref{eq.prokhorov}, we get that
$$\dTV\bigl(\gamma(b_k)^{*L_k} , \cE(\lambda_k \delta_{b_k})\bigr) \leq \exp(-b_k) \quad\text{where}\;\; \lambda_k = \frac{L_k \exp(-b_k)}{1+\exp(-b_k)} \; .$$
Since $\exp(-b_k) \leq \exp(-k+1)$ is summable, it follows from Lemma \ref{lem.close-in-hellinger} that $\R \actson X$ is a Poisson flow of positive type.

Conversely, assume that $\R \actson X$ is the Poisson flow of positive type defined by the sequence of probability measures $\mu_n = \cE(\eta_n)$, where each $\eta_n$ is a finite positive measure supported on $(0,+\infty)$. We may assume that $\R \actson X$ is aperiodic. Taking integers $L_n \geq \eta_n(\R)$, we can replace $\mu_n$ by $\cE(\eta_n / L_n)$, each repeated $L_n$ times. We may thus assume that $\lambda_n := \eta_n(\R) \leq 1$.

For every fixed $C > 0$, we have that
$$\beta_n := \exp(-\lambda_n) (\delta_0 + \eta_n|_{(0,C]}) \leq \mu_n$$
for all $n \in \N$ and that all $\beta_n$ are supported on $[0,C]$. It follows from Proposition \ref{prop.periodicity-criterion} that
\begin{equation}\label{eq.variance-estim-1}
\sum_{n=1}^\infty \beta_n(\R) \, \Var(\beta_n(\R)^{-1} \, \beta_n) < +\infty \; .
\end{equation}
Since $\exp(-\lambda_n) \geq \exp(-1)$ and $\eta_n((0,C]) \leq 1$ for all $n \in \N$, it follows from \eqref{eq.variance-estim-1} and \eqref{eq.elementary-variance} that for every $C > 0$,
\begin{equation}\label{eq.first-sum-finite}
\sum_{n=1}^\infty \int_{(0,C]} x^2 \, d\eta_n(x) < +\infty \; .
\end{equation}
Denote $\rho_n = \eta_n|_{(0,1]}$ and, for all $k \in \N$, $\eta_{n,k} = \eta_n|_{(k,k+1]}$. Since $\cE(\eta_n)$ is the convolution of the measures $\cE(\rho_n)$ and $\cE(\eta_{n,k})$, $n,k \in \N$, we conclude that $\R \actson X$ also is the tail boundary flow of the union of the probability measures $\cE(\rho_n)$, $n \in \N$, and $\cE(\eta_{n,k})$, $n,k \in \N$.

By \eqref{eq.first-sum-finite} and \eqref{eq.poisson-mean-variance}, we have $\sum_{n=1}^\infty \Var(\cE(\rho_n)) < +\infty$. By e.g.\ \cite[Proposition 2.1]{BV20}, the tail boundary flow of $(\cE(\rho_n))_{n \in \N}$ is given by the translation action $\R \actson \R$, so that $\R \actson X$ is the tail boundary flow of the measures $\cE(\eta_{n,k})$, $n,k \in \N$.

For every fixed $k \in \N$, it follows from \eqref{eq.first-sum-finite} that $\sum_{n =1}^\infty \eta_{n,k}(\R) < +\infty$. We define $\zeta_k = \sum_{n=1}^\infty \eta_{n,k}$ and conclude that $\R \actson X$ is the tail boundary flow of $\cE(\zeta_k)$, $k \in I$, where $\zeta_k$ is a finite positive measure supported on $(k,k+1]$.

Write $\ell_k = \zeta_k(\R)$ and fix integers $L_k \geq \ell_k$. Since $\R \actson X$ also is the tail boundary flow of the measures $\cE(\zeta_k/L_k)$, each repeated $L_k$ times, and since $\exp(-1) L_k^{-1} \zeta_k \leq \cE(\zeta_k/L_k)$, it follows from Proposition \ref{prop.periodicity-criterion} that the points
$$b_k = \ell_k^{-1} \int_\R x \, d\zeta_k(x) \in (k,k+1] \quad\text{satisfy}\quad \sum_{k=1}^\infty \int_\R (x-b_k)^2 \, d\zeta_k(x) < +\infty \; .$$
Denote by $W_2$ the Wasserstein $2$-distance. By \eqref{eq.wasserstein-estimate}, we have that
$$W_2(\cE(\zeta_k), \cE(\ell_k \delta_{b_k}))^2 \leq \exp(-\ell_k) \sum_{j=1}^\infty \frac{1}{j!} \int_\R (x - j b_k)^2 \, d\zeta_k^{* j}(x) \; .$$
For every $j \geq 1$, we have that
\begin{align*}
\int_\R (x - j b_k)^2 \, d\zeta_k^{* j}(x) &= \int_{\R^j} \bigl( (x_1 - b_k) + \cdots + (x_j - b_k) \bigr)^2 \, d\zeta_k(x_1) \, \cdots \, d\zeta_k(x_j)
\\ &= j \, \ell_k^{j-1} \, \int_\R (x-b_k)^2 \, d\zeta_k(x) \; .
\end{align*}
We conclude that
$$W_2(\cE(\zeta_k), \cE(\ell_k \delta_{b_k}))^2 \leq \int_\R (x-b_k)^2 \, d\zeta_k(x) \quad\text{so that}\quad \sum_{k=1}^\infty W_2(\cE(\zeta_k), \cE(\ell_k \delta_{b_k}))^2 < + \infty \; .$$
By Proposition \ref{prop.iso-wasserstein}, $\R \actson X$ is also the tail boundary flow of the sequence $\cE(\ell_k \delta_{b_k})$, $k \in \N$.
Write $\ell'_k = \exp(-b_k)(1+\exp(-b_k))^{-1}$. Take integers $M_k \geq 1$ such that $|M_k \ell'_k - \ell_k| \leq \ell'_k$.
For all $c,d > 0$ and $k \in \N$, one has
$$|c^k - d^k| = |c-d| \, \bigl|\sum_{i=0}^{k-1} c^i d^{k-1-i}\bigr| \leq k \, |c-d| \, (\max\{c,d\})^{k-1} \; ,$$
so that $\dTV(\cE(c \delta_b) , \cE(d \delta_b)) \leq |c-d|$ for all $c, d > 0$. Therefore,
$$\dTV\bigl(\cE(M_k \ell'_k \, \delta_{b_k}) , \cE(\ell_k \, \delta_{b_k})\bigr) \leq \exp(-b_k) \quad\text{for all $k \in \N$.}$$
By \eqref{eq.prokhorov}, we have
$$\dTV\bigl(\cE(M_k \ell'_k \, \delta_{b_k}) , \gamma(b_k)^{* M_k}\bigr) \leq \ell'_k \leq \exp(-b_k) \; .$$
Since $b_k > k$, the sequence $\exp(-b_k)$ is summable. It then follows from Lemma \ref{lem.close-in-hellinger} that $\R \actson X$ is the tail boundary flow of the measures $(\gamma(b_k)^{* M_k})_{k \in \N}$ and hence, is isomorphic with the flow of weights of an ITPFI$_2$ factor.
\end{proof}

The proof of Theorem \ref{thm.Poisson-vs-ITPFI-2} relied on Prokhorov's \eqref{eq.prokhorov}. Using the full force of \cite[Theorem 1]{BH83}, we can also prove the following result.

\begin{proposition}\label{prop.characterize-Poisson-flow-two-point-measures}
The Poisson flows are exactly the tail boundary flows of sequences $(\mu_n)_{n \in \N}$ where the support of all $\mu_n$ consists of two points and $\sup_{n \in \N} \Var \mu_n < +\infty$.

The Poisson flows of positive type are exactly the tail boundary flows of such sequences $(\mu_n)_{n \in \N}$ with support $a_n < b_n$ and $\mu_n(a_n) \geq \mu_n(b_n)$, and $\sup_{n \in \N} \Var \mu_n < +\infty$.
\end{proposition}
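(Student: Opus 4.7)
Starting from a Poisson flow realized as the tail boundary of compound Poisson distributions $\cE(\eta_n)$, the plan is to perform a chain of tail-boundary-preserving reductions. First, I would approximate each finite positive measure $\eta_n$ by a finitely supported atomic measure $\widetilde\eta_n = \sum_i \lambda_{n,i}\delta_{x_{n,i}}$ with $x_{n,i} \neq 0$, fine enough that $\sum_n W_2(\cE(\eta_n), \cE(\widetilde\eta_n))^2 < +\infty$, so that Proposition~\ref{prop.iso-wasserstein} preserves the tail boundary. Using the multiplicativity $\cE(\eta + \eta') = \cE(\eta) * \cE(\eta')$, I split each $\cE(\widetilde\eta_n)$ as a convolution of atomic compound Poissons $\cE(\lambda_{n,i}\delta_{x_{n,i}})$, and further split each into $M_{n,i}$ equal pieces $\cE(\epsilon_{n,i}\delta_{x_{n,i}})$ with $\epsilon_{n,i} = \lambda_{n,i}/M_{n,i}$ arbitrarily small. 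A direct series expansion gives $\|\cE(\epsilon\delta_x) - (1+\epsilon)^{-1}(\delta_0 + \epsilon\delta_x)\|_{\TV} = O(\epsilon^2)$. By choosing the $M_{n,i}$ large enough that both $M_{n,i}\epsilon_{n,i}^2 \leq 2^{-n-i}$ and $\epsilon_{n,i} x_{n,i}^2 \leq 1$ hold (possible since each $x_{n,i}$ is fixed), the total TV error is summable, so Lemma~\ref{lem.close-in-hellinger} preserves the tail boundary, while the variance of each two-point Bernoulli $(1+\epsilon_{n,i})^{-1}(\delta_0 + \epsilon_{n,i}\delta_{x_{n,i}})$ equals $\epsilon_{n,i} x_{n,i}^2/(1+\epsilon_{n,i})^2 \leq 1$. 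For the positive-type refinement, if each $\eta_n$ is supported on $\R_{\geq 0}$ then all $x_{n,i} > 0$, and $\epsilon_{n,i} \leq 1$ ensures the Bernoulli has $a = 0 < b$ with $p \geq q$.

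\textbf{Direction two-point with bounded variance $\Rightarrow$ Poisson flow.}
Let $\mu_n = p_n\delta_{a_n} + q_n\delta_{b_n}$ with $q_n \leq p_n$ and $\sup_n \Var\mu_n \leq C$. A constant shift of each $\mu_n$ does not affect the tail boundary flow, so I assume $a_n = 0$ and rewrite $\mu_n = (1-q_n)\delta_0 + q_n\delta_{c_n}$ with $q_n \leq 1/2$ and $q_n c_n^2 \leq 2C$. Trivial and periodic flows are manifestly Poisson flows (take $\cE(\lambda\delta_a)$ iid), so I assume the tail boundary is aperiodic. For the small-jump indices $I_0 = \{n : |c_n| < 1\}$, $\mu_n$ is supported in $[-1,1]$, so Proposition~\ref{prop.periodicity-criterion} with $\beta_n = \mu_n$ yields $\sum_{n \in I_0} \Var\mu_n < +\infty$; by \cite[Proposition~2.1]{BV20} these indices contribute only the translation flow $\R \actson \R$, which is absorbed in the joint flow, and I may discard $I_0$. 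For the remaining indices I would introduce the double partition $J_{k,j} = \{n : c_n \in [k,k+1)\text{ and }q_n \in [2^{-j-1}, 2^{-j})\}$ for integers $k \geq 1$, $j \geq 1$ (and symmetrically for $c_n < 0$). Proposition~\ref{prop.further-concentration} applied with $p_{k,j} = 1 - 2^{-j}$, $q_{k,j} = 2^{-j-1}$, and $\beta_n = \delta_{c_n}$ gives the finiteness of each $J_{k,j}$ together with concentration points $t_{k,j} \in [k, k+1)$ satisfying $\sum_{k,j} \sum_{n \in J_{k,j}} q_n(c_n - t_{k,j})^2 < +\infty$ (since $q_n$ and $q_{k,j}$ are comparable within a block). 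Proposition~\ref{prop.iso-wasserstein} then replaces each $c_n$ by $t_{k,j}$ without changing the tail boundary flow. The Bernoullis inside each $J_{k,j}$ now share a common jump $t_{k,j}$, so the BH83 Poisson approximation applied to their convolution gives TV error at most a constant multiple of $\min(1, \lambda_{k,j}^{-1})\sum_{n \in J_{k,j}} q_n^2 \leq 2^{-j}$, where $\lambda_{k,j} = \sum_{n \in J_{k,j}} q_n$ and we used $q_n \leq 2^{-j}$ inside the block. The variance bound $q_n c_n^2 \leq 2C$ forces $k \leq O(2^{j/2})$ for non-empty $J_{k,j}$, so the total error summed over $(k,j)$ is finite. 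Lemma~\ref{lem.close-in-hellinger} then identifies the tail boundary flow of $(\mu_n)$ with that of $(\cE(\lambda_{k,j}\delta_{t_{k,j}}))_{(k,j)}$, which is a Poisson flow. The positive-type version follows at once: if $c_n > 0$ throughout, only $k \geq 1$ occurs and every $t_{k,j} > 0$.

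\textbf{Main obstacle.}
The essential difficulty is that the naive one-term Poisson approximation $\|\mu_n - \cE(q_n\delta_{c_n})\|_{\TV} = O(q_n^2)$ is not summable when $q_n$ is bounded below (for instance the symmetric case $q_n = 1/2$ for all $n$), so one cannot replace Bernoullis individually. The double partition in $(k,j)$ circumvents this by coupling $q_n$ to $c_n$ through the variance bound ($q_n \leq 2C/k^2$ whenever $c_n \geq k$) and by using Proposition~\ref{prop.further-concentration} to concentrate the jumps within each block, so that BH83 can be applied block by block with a single common jump and a summable error across blocks.
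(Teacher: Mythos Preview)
Your proof is correct and follows the same overall strategy as the paper: Prokhorov-type Poisson approximation in the forward direction, and concentration via Propositions~\ref{prop.periodicity-criterion}--\ref{prop.further-concentration}, Wasserstein replacement via Proposition~\ref{prop.iso-wasserstein}, then the Barbour--Hall bound in the reverse direction. The differences are in execution. In the forward direction the paper first invokes the reduction (already carried out inside the proof of Theorem~\ref{thm.Poisson-vs-ITPFI-2}) of an arbitrary Poisson flow to one given by $(\cE(\lambda_k\delta_{b_k}))_{k}$ with $b_k\in(k,k+1]$, and then applies a single Prokhorov step; you instead discretize each $\eta_n$ directly and split into small atomic pieces before approximating. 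In the reverse direction you introduce a double partition $J_{k,j}$ (binning simultaneously the jump location $c_n$ and the weight $q_n$), which makes the hypothesis of Proposition~\ref{prop.further-concentration} transparent since all $q_n$ within a block are comparable to the single $q_{k,j}$. The paper partitions only in $k$ and then exploits the variance bound more aggressively at the BH83 step, observing that $\max_{n\in J_k} p_n \le 2C/(|k|-1)^2$, which is already summable over $k$. In particular, your worry in the ``Main obstacle'' paragraph that $q_n$ could stay bounded below is a non-issue once the small-$|c_n|$ indices have been discarded: the constraint $q_n c_n^2\le 2C$ forces $q_n\to 0$ along the large-$|k|$ blocks, which is precisely what drives the paper's one-parameter summation. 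Both routes are valid; the paper's is shorter once concentration is in hand, while your dyadic refinement makes the application of Proposition~\ref{prop.further-concentration} entirely self-contained.
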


Some bound on the variances $\Var \mu_n$ must be imposed in order to get a Poisson flow. Combining \cite[Theorem 2.1]{GSW84} and Proposition \ref{prop.infinitely-divisible} below, it follows that the tail boundary flow of the sequence of probability measures $\bigl((\delta_0 + \delta_{8^n})/2\bigr)_{n \in \N}$ is not a Poisson flow.

\begin{proof}
First assume that $\R \actson Z$ is a Poisson flow. In most of the proof of Theorem \ref{thm.Poisson-vs-ITPFI-2}, we did not use that the measures are supported on the positive real line. Writing $I = \Z \setminus \{-1,0\}$, we find for every $k \in I$, elements $b_k \in (k,k+1]$ and constants $\lambda_k > 0$ such that $\R \actson Z$ is isomorphic with the tail boundary flow of the family $(\cE(\lambda_k \delta_{b_k}))_{k \in I}$. Choose for every $k \in I$ an integer $M_k \geq 1$ such that $M_k^{-1} \lambda_k \leq 2^{-|k|}$. Define for $k \in I$, the probability measure
$$\eta_k = (1- M_k^{-1} \lambda_k)\delta_0 + M_k^{-1} \lambda_k \delta_{b_k} \; .$$
By \eqref{eq.prokhorov}, we have $\dTV\bigl(\cE(\lambda_k \delta_{b_k}) , \eta_k^{*M_k}\bigr) \leq M_k^{-1} \lambda_k \leq 2^{-|k|}$, which is summable. By Lemma \ref{lem.close-in-hellinger}, $\R \actson Z$ is isomorphic with the tail boundary flow of the probability measures $\eta_k$ repeated $M_k$ times, $k \in I$. Since
$$\Var \eta_k \leq M_k^{-1} \lambda_k b_k^2 \leq 2^{-|k|} (|k|+1)^2 \to 0 \; ,$$
one implication of the proposition is proven.

Conversely, assume that $(\mu_n)_{n \in \N}$ is a sequence of probability measures whose support consists of two points and that satisfy $\Var \mu_n \leq C$ for all $n \in \N$. Denote by $\R \actson Z$ their tail boundary flow. We have to prove that $\R \actson Z$ is a Poisson flow. We may thus assume that $\R \actson Z$ is aperiodic. Since translating the measures $\mu_n$ does not change the tail boundary flow, we may assume that $\mu_n(0) \geq 1/2$ for all $n \in \N$ and we denote by $d_n \in \R$ the other atom of $\mu_n$. Write $p_n = \mu_n(d_n)$ Since $\Var \mu_n \leq C$, we get that $p_n d_n^2 \leq 2 C$ for all $n \in \N$.

For every $k \in \Z$, write $J_k = \{n \in \N \mid d_n \in [k,k+1)\}$. Write $I = \Z \setminus \{-1,0,1\}$. Arguing as in the proof of Theorem \ref{thm.Poisson-vs-ITPFI-2}, it follows from Proposition \ref{prop.periodicity-criterion} that
\begin{equation}\label{eq.to-ignore}
\sum_{n \in J_{-1} \cup J_0 \cup J_1} \Var(\mu_n) < + \infty \; ,
\end{equation}
and it follows from Proposition \ref{prop.further-concentration} that we find for every $k \in I$ an element $b_k \in [k,k+1)$ such that
\begin{equation}\label{eq.my-conc-here}
\sum_{k \in I} \sum_{n \in J_k} p_n (b_k - d_n)^2 < + \infty \; .
\end{equation}
By \eqref{eq.to-ignore}, the tail boundary flow of the measures $(\mu_n)_{n \in J_{-1} \cup J_0 \cup J_1}$ is given by the translation action $\R \actson \R$, so that these measures may be ignored. Defining for every $k \in I$ and $n \in J_k$ the probability measure $\eta_n = (1-p_n) \delta_0 + p_n \delta_{b_k}$, note that \eqref{eq.my-conc-here} is saying that
$$\sum_{k \in I} \sum_{n \in J_k} W_2(\mu_n,\eta_n)^2 <+\infty \; .$$
It thus follows from Proposition \ref{prop.iso-wasserstein} that $\R \actson Z$ is the tail boundary flow of the measures $\eta_n$, $k \in I$, $n \in J_k$. Write $\lambda_k = \sum_{n \in J_k} p_n$. By \cite[Theorem 1]{BH83}, we have for every $k \in I$ that
$$\dTV\Bigl( \bigconv_{n \in J_k} \eta_n \, , \, \cE(\lambda_k \delta_{b_k})\Bigr) \leq \lambda_k^{-1} \sum_{n \in J_k} p_n^2 \leq \max_{n \in J_k} p_n \leq \max_{n \in J_k} 2 C d_n^{-2} \leq \frac{2C}{(|k|-1)^2} \; ,$$
where we used that $d_n \in [k,k+1)$ if $n \in J_k$. Since the right hand side is summable, it follows from Lemma \ref{lem.close-in-hellinger} that $\R \actson Z$ is also the tail boundary flow of the Poisson distributions $(\cE(\lambda_k \delta_{b_k}))_{k \in I}$ and thus is a Poisson flow.

The positive case is proven entirely analogously.
\end{proof}

\subsection{Infinitely divisible flows}

\begin{definition}\label{def.infinitely-divisible}
We say that a flow $\R \actson (Z,\eta)$ is \emph{infinitely divisible} if for every integer $L \geq 1$, there exists a flow $\R \actson (Z_1,\eta_1)$ such that $\R \actson (Z,\eta)$ is isomorphic with the joint flow of $L$ copies of $\R \actson (Z_1,\eta_1)$.
\end{definition}

\begin{proposition}\label{prop.infinitely-divisible}
Every Poisson flow is infinitely divisible.

Every tail boundary flow of a sequence of infinitely divisible distributions is a Poisson flow.
\end{proposition}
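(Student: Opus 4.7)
The first statement is immediate from the identity $\cE(\mu)=\cE(\mu/L)^{*L}$: given a Poisson flow $\R\actson Z$ realized as the tail boundary flow of $(\cE(\mu_n))_{n\in\N}$ and an integer $L\geq 1$, I replace each step $\cE(\mu_n)$ by $L$ consecutive independent steps of law $\cE(\mu_n/L)$, which does not affect the tail boundary flow. Partitioning the enlarged sequence into $L$ infinite subsequences, each a reordering of $(\cE(\mu_n/L))_{n\in\N}$, the disjoint-union/joint-flow identity recalled just before the proof of Theorem \ref{thm.almost-periodic-Poisson} realizes $\R\actson Z$ as the joint flow of $L$ copies of the Poisson flow associated to $(\cE(\mu_n/L))_{n\in\N}$.

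\textbf{Part 2.} For the converse, my plan is to invoke the L\'{e}vy-Khintchine representation: each infinitely divisible probability measure $\rho_n$ on $\R$ factors as $\rho_n=\delta_{b_n}*\mathcal{N}(0,\sigma_n^2)*\iota_n$, where $\iota_n$ is a pure-jump infinitely divisible distribution determined by a L\'{e}vy measure $\nu_n$ on $\R\setminus\{0\}$ with $\int (x^2\wedge 1)\,d\nu_n<+\infty$. A single step of $\rho_n$ splits into three consecutive independent steps, so by the disjoint-union/joint-flow property the tail boundary flow of $(\rho_n)_n$ is the joint flow of those of $(\delta_{b_n})_n$, $(\mathcal{N}(0,\sigma_n^2))_n$ and $(\iota_n)_n$. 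Since the joint flow of Poisson flows is itself Poisson (just concatenate the defining sequences of compound Poisson distributions), it suffices to check that each of these three sequences has a Poisson tail boundary flow.

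The drift sequence gives a deterministic random walk whose tail boundary is the translation flow $\R\actson\R$, a Poisson flow. For the Gaussian sequence, when $\sum_n\sigma_n^2<+\infty$ Kolmogorov's three-series theorem together with \cite[Proposition 2.1]{BV20} identify the tail boundary flow with $\R\actson\R$, while when $\sum_n\sigma_n^2=+\infty$ a direct argument shows the tail is trivial, any bounded tail function being invariant under Gaussian shifts of arbitrarily large variance and hence constant in $t$; both cases yield a Poisson flow. For the jump sequence, I choose $\epsilon_n>0$ with $\int_{|x|\leq\epsilon_n}x^2\,d\nu_n\leq 2^{-n}$, possible by monotone convergence since $\int_{|x|\leq 1}x^2\,d\nu_n<+\infty$, and decompose $\iota_n=\iota_n^{>\epsilon_n}*\iota_n^{\leq\epsilon_n}$ along the truncation of $\nu_n$. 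Since $\nu_n|_{|x|>\epsilon_n}$ is a finite measure, $\iota_n^{>\epsilon_n}$ is a drift convolved with the compound Poisson $\cE(\nu_n|_{|x|>\epsilon_n})$, while $\iota_n^{\leq\epsilon_n}$ has mean $c_n\in\R$ and variance at most $2^{-n}$, so that a product coupling yields $W_2(\iota_n,\iota_n^{>\epsilon_n}*\delta_{c_n})^2\leq 2^{-n}$. Proposition \ref{prop.iso-wasserstein} then identifies the tail boundary flow of $(\iota_n)_n$ with that of a sequence of drifts convolved with compound Poissons, which is a Poisson flow.

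The main obstacle I foresee is the Gaussian case with $\sum_n\sigma_n^2=+\infty$, which falls outside the Wasserstein approximation scheme of Proposition \ref{prop.iso-wasserstein} and requires a separate direct argument for tail triviality; once this standard fact is in place, the rest of the argument is a fairly mechanical application of the L\'{e}vy-Khintchine decomposition combined with the approximation lemmas already established in Section \ref{sec.poisson-flow}.
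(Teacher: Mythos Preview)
Your Part 1 is the paper's argument.

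For Part 2, the paper takes a much shorter route: it invokes the classical fact that compound Poisson distributions are weakly dense in the set of all infinitely divisible distributions (see e.g.\ \cite[Theorem 3.2.7]{Stro11}) and proves a general density lemma (Lemma \ref{lem.density}, stated immediately after the proposition) showing that if $\cF \subset \Prob(\R)$ has weak closure $\overline{\cF}$, then any tail boundary flow realizable by a sequence in $\overline{\cF}$ is already realizable by a sequence in $\cF$. The lemma is proved by convolving each $\mu_n$ with the uniform measure on $[-1/n,1/n]$, which leaves the tail boundary flow unchanged but converts weak approximation into total-variation approximation, after which Lemma \ref{lem.close-in-hellinger} applies. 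Your approach via the L\'evy--Khintchine decomposition is correct and gives a more hands-on picture of where the Poisson structure comes from, but it is considerably longer and requires the separate Gaussian argument you flag. That argument can be carried out (for instance: use infinite divisibility to split and regroup so that all block variances lie in $[1,2]$, then run the proof of Proposition \ref{prop.periodicity-criterion} and observe that the limiting measure $\eta$ there is Gaussian with full support, so Choquet--Deny yields triviality rather than mere periodicity), but it is not quite the one-line standard fact you suggest, and the paper's density lemma sidesteps it entirely while also being reusable for Proposition \ref{prop.standard-poisson}.
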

\begin{proof}
If $\R \actson Z$ is the tail boundary flow of the sequence $(\cE(\mu_n))_{n \in \N}$, where $\mu_n$ is a sequence of finite positive measures on $\R$, and if $L \geq 1$ is an integer, we can define $\R \actson Z_1$ as the tail boundary flow of the sequence $(\cE(L^{-1} \mu_n))_{n \in \N}$. By construction, $\R \actson Z$ is isomorphic with the joint flow of $L$ copies of $\R \actson Z_1$.

Since compound Poisson distributions $\cE(\mu)$ are weakly dense in the set of infinitely divisible distributions (see e.g.\ \cite[Theorem 3.2.7]{Stro11}), the second statement follows directly from Lemma \ref{lem.density} below.
\end{proof}

\begin{lemma}\label{lem.density}
Let $\cF \subset \Prob(\R)$ with weak closure $\overline{\cF}$. Every ergodic flow that can be obtained as the tail boundary flow of a sequence in $\overline{\cF}$ can also be obtained as the tail boundary flow of a sequence in $\cF$.
\end{lemma}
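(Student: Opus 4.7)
The plan is to approximate each $\nu_n \in \overline{\cF}$ by an element $\mu_n \in \cF$ that is sufficiently close in the Wasserstein $2$-metric $W_{2,\kappa}$ (for some fixed $\kappa > 0$), and then invoke Proposition \ref{prop.iso-wasserstein}. Concretely, suppose $\R \actson (Z,\eta)$ is the tail boundary flow of a sequence $(\nu_n)_{n \in \N}$ in $\overline{\cF}$. If we can pick $\mu_n \in \cF$ such that $\sum_{n=1}^\infty W_{2,\kappa}(\mu_n,\nu_n)^2 < +\infty$, then Proposition \ref{prop.iso-wasserstein} immediately gives that the tail boundary flow of $(\mu_n)_{n \in \N}$ is also $\R \actson (Z,\eta)$, which is what we want.

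The key ingredient that makes this possible is the standard fact that $W_{2,\kappa}$ metrizes the weak topology on $\Prob(\R)$. This is where I expect the only work lies, although it is entirely routine: the bounded continuous metric $d(x,y) = |T_\kappa(x-y)| = \min(|x-y|,\kappa)$ on $\R$ has bounded diameter, so Skorokhod's representation theorem shows that whenever $\mu^{(k)} \to \mu$ weakly, there exist random variables $X_k, X$ on a common probability space with laws $\mu^{(k)}, \mu$ such that $X_k \to X$ almost surely; the bounded convergence theorem then yields $W_{2,\kappa}(\mu^{(k)},\mu)^2 \leq E[T_\kappa(X_k-X)^2] \to 0$. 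Conversely, $W_{2,\kappa}(\mu^{(k)},\mu) \to 0$ implies weak convergence because it dominates the Wasserstein-$1$ distance with respect to the bounded metric $d$, which already metrizes weak convergence.

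Granted this, since the weak topology on $\Prob(\R)$ is metrizable and $\nu_n \in \overline{\cF}$, for each $n \in \N$ we may pick $\mu_n \in \cF$ with $W_{2,\kappa}(\mu_n,\nu_n) \leq 2^{-n}$. The resulting sequence satisfies $\sum_{n=1}^\infty W_{2,\kappa}(\mu_n,\nu_n)^2 \leq \sum_{n=1}^\infty 4^{-n} < +\infty$, and Proposition \ref{prop.iso-wasserstein} then yields that the tail boundary flow of $(\mu_n)_{n \in \N}$ coincides with that of $(\nu_n)_{n \in \N}$, completing the proof. Note that a Hellinger-based approach via Lemma \ref{lem.close-in-hellinger} would fail at the outset, because weak convergence does not imply Hellinger closeness (e.g.\ distinct Dirac masses are always at Hellinger distance $1$), so it is essential to use the Wasserstein framework.
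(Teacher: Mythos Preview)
Your proof is correct and takes a genuinely different route from the paper's. The paper does not use Proposition~\ref{prop.iso-wasserstein}; instead it smooths each measure by convolving with the uniform probability $\nu_n$ on $[-1/n,1/n]$ (whose variances are summable, so by \cite[Proposition~2.1]{BV20} this leaves the tail boundary flow unchanged), and exploits the elementary fact that convolution with an absolutely continuous measure upgrades weak convergence to convergence in total variation; one can then pick $\mu'_n \in \cF$ with $\|\mu'_n * \nu_n - \mu_n * \nu_n\|_{\TV}$ summable and apply Lemma~\ref{lem.close-in-hellinger}. Your approach is more direct in that it avoids the auxiliary smoothing and goes straight through the observation that $W_{2,\kappa}$ metrizes weak convergence (your Skorokhod argument is fine, and in fact you only need the one implication, weak $\Rightarrow$ $W_{2,\kappa}$); the cost is invoking the heavier Proposition~\ref{prop.iso-wasserstein} rather than the lighter Lemma~\ref{lem.close-in-hellinger}. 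Consequently your closing remark that a Hellinger-based approach ``would fail at the outset'' is a little too strong: the paper shows that it does work once one inserts the smoothing step.
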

\begin{proof}
Let $(\mu_n)_{n \in \N}$ be a sequence in $\overline{\cF}$. Denote by $\nu_n$ the uniform probability measure on the interval $[-1/n,1/n]$, so that $\Var \nu_n = n^{-2}/2$ is summable. By e.g.\ \cite[Proposition 2.1]{BV20}, the tail boundary flow of the sequence $(\nu_n)_{n \in \N}$ is the translation action $\R \actson \R$. Therefore, $(\mu_n)_{n \in \N}$ and $(\mu_n * \nu_n)_{n \in \N}$ give rise to isomorphic tail boundary flows.

Since $\cF$ is weakly dense in $\overline{\cF}$ and since $\nu_n$ is absolutely continuous, we can choose $\mu'_n \in \cF$ such that $\dTV\bigl(\mu'_n * \nu_n , \mu_n * \nu_n\bigr) \leq n^{-2}$. By Lemma \ref{lem.close-in-hellinger}, also $(\mu_n * \nu_n)_{n \in \N}$ and $(\mu'_n * \nu_n)_{n \in \N}$ give rise to isomorphic tail boundary flows, with the latter being isomorphic to the tail boundary flow of $(\mu'_n)_{n \in \N}$.
\end{proof}

Define for every $\lambda > 0$ and $a \in \R$ the standard Poisson distribution with support $\{k a \mid k=0,1,2,\ldots\}$ given by
\begin{equation}\label{eq:Poisson dist}
\si_{\lambda,a}(\{ka\}) = \exp(-\lambda) \, \frac{\lambda^k}{k!} \quad\text{for all $k \in \{0,1,2,\ldots\}$.}
\end{equation}

\begin{proposition}\label{prop.standard-poisson}
Every Poisson flow is the tail boundary flow of a sequence $(\si_{\lambda_n,a_n})_{n \in \N}$ with $\lambda_n > 0$ and $a_n \in \R$.
\end{proposition}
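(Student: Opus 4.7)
The plan rests on the observation that $\si_{\lambda,a} = \cE(\lambda \delta_a)$: a standard Poisson distribution is just the compound Poisson distribution associated to a finite positive measure concentrated at a single atom. So the goal reduces to realising every Poisson flow as the tail boundary flow of a sequence of compound Poisson distributions of the single-atom form $\cE(\lambda_n \delta_{a_n})$.

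First I would dispose of the trivial and periodic cases by direct construction, exactly as in the opening paragraph of the proof of Theorem \ref{thm.almost-periodic-Poisson}. For the periodic flow $\R \actson \R/a\Z$ with $a > 0$, the constant sequence $\si_{1,a} = \cE(\delta_a)$ already works. For the trivial flow, I would take a sequence alternating between $\si_{1,1}$ and $\si_{1,a}$ with $a > 0$ irrational, and invoke the identity $\si_{1,1} * \si_{1,a} = \cE(\delta_1 + \delta_a)$ together with the corresponding statement from the proof of Theorem \ref{thm.almost-periodic-Poisson}.

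The aperiodic case then requires essentially no additional work, because the needed reduction has already been performed inside the proof of Proposition \ref{prop.characterize-Poisson-flow-two-point-measures}. In the first half of that proof, one observes that the argument of Theorem \ref{thm.Poisson-vs-ITPFI-2} goes through without the positive-support assumption on the underlying measures, and produces, for any aperiodic Poisson flow $\R \actson Z$, points $b_k \in (k,k+1]$ and constants $\lambda_k > 0$ indexed by $k \in I = \Z \setminus \{-1,0\}$ such that $\R \actson Z$ is the tail boundary flow of $(\cE(\lambda_k \delta_{b_k}))_{k \in I}$. Since each $\cE(\lambda_k \delta_{b_k})$ is exactly $\si_{\lambda_k,b_k}$, this is precisely the required form, and after a trivial relabelling to index by $\N$ the proposition is proved.

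I do not expect a real obstacle here; the only subtlety worth flagging is checking, step by step, that the estimates in the proof of Theorem \ref{thm.Poisson-vs-ITPFI-2} (the applications of Propositions \ref{prop.periodicity-criterion} and \ref{prop.further-concentration}, the Wasserstein estimate \eqref{eq.wasserstein-estimate}, and the final Prokhorov-type approximation) remain valid once one replaces intervals in $\R_{\geq 0}$ by intervals indexed over $\Z \setminus \{-1,0\}$ on both sides of the origin, which is exactly the bookkeeping already carried out in the proof of Proposition \ref{prop.characterize-Poisson-flow-two-point-measures}.
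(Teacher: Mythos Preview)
Your argument is correct, but it takes a genuinely different route from the paper's own proof. The paper proves Proposition \ref{prop.standard-poisson} in two lines via Lemma \ref{lem.density}: since convolution products of the measures $\si_{\lambda,a}$ are exactly the compound Poisson distributions $\cE(\mu)$ with $\mu$ of finite support, and since these are weakly dense among all compound Poisson distributions, the density lemma immediately transports any realization by compound Poisson distributions to a realization by measures of the form $\si_{\lambda,a}$. No case split, no aperiodicity assumption, no concentration analysis.

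Your approach instead recycles the structural work already done in the proofs of Theorem \ref{thm.Poisson-vs-ITPFI-2} and Proposition \ref{prop.characterize-Poisson-flow-two-point-measures}: treat the trivial and periodic cases by hand, and in the aperiodic case invoke the conclusion (established there for general, not only positive, Poisson flows) that $\R \actson Z$ is the tail boundary flow of a family $(\cE(\lambda_k \delta_{b_k}))_{k \in I}$ with $b_k \in (k,k+1]$. This is logically sound and not circular --- those proofs do not use Proposition \ref{prop.standard-poisson}. It even yields slightly more than the paper's argument, namely that in the aperiodic case the atoms $a_n$ can be chosen sparse (at most one per unit interval). The cost is that you lean on the heavier machinery of Propositions \ref{prop.periodicity-criterion} and \ref{prop.further-concentration} and on the Wasserstein comparison of Proposition \ref{prop.iso-wasserstein}, whereas the paper's route needs only the soft weak-density argument of Lemma \ref{lem.density}.
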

\begin{proof}
The convolution products of measures of the form $\si_{\lambda,a}$ are precisely the compound Poisson distributions $\cE(\mu)$ where $\mu$ is a finite positive measure with finite support. These $\cE(\mu)$ are weakly dense in the set of all compound Poisson distributions. The conclusion thus immediately follows from Lemma \ref{lem.density}.
\end{proof}

\begin{remark}\label{rem.concluding-flows}
Because of Proposition \ref{prop.infinitely-divisible}, it is tempting to speculate that every infinitely divisible flow is a Poisson flow, at least if the flow is assumed to be approximately transitive (and thus, the tail boundary flow of some sequence of probability measures, by \cite[Theorem 3.2]{CW88}). We have however no idea how to prove such a statement.

Going back to Theorem \ref{thm.Poisson-vs-ITPFI-2}, it is also unclear whether every Poisson flow is automatically of positive type. Because of Theorem \ref{thm.Poisson-vs-ITPFI-2}, this question is equivalent with the following seemingly innocent, but highly tantalizing problem: if $\R \actson^\al Z$ is the flow of weights of an ITPFI$_2$ factor, does it follow that the reverse flow $\be_t(z) = \al_{-t}(z)$ also is the flow of weights of an ITPFI$_2$ factor?

Combining both open problems, it is tempting to speculate that the ITPFI$_2$ factors are precisely the injective factors $M$ that are infinitely divisible, in the sense that for every integer $L \geq 1$, there exists a factor $N$ such that $M \cong N^{\ovt L}$, or to speculate that at least, infinite divisibility characterizes the ITPFI$_2$ factors among the ITPFI factors.
\end{remark}

\section{Nonsingular Bernoulli shifts: proof of Theorem \ref{thm.main-structure}}\label{sec:nonsingular Z}

The goal of this section is to prove the following more precise formulation of Theorem \ref{thm.main-structure}. Since we want to describe absolutely general Bernoulli shifts, the formulation becomes a bit lengthy, because we have to deal with the less interesting cases that may arise where the space has atoms or the action is dissipative.

We denote by $\cS$ the group of finite permutations of the countable set $\Z$ and let $\cS$ act on $(X,\mu) = \prod_{n \in \Z} (X_n,\mu_n)$ by $(\si \cdot x)_n = x_{\si^{-1}(n)}$.

\begin{theorem}\label{thm.main-structure-variant}
Let $X_0$ be any standard Borel space and let $(\mu_n)_{n \in \Z}$ be any family of equivalent probability measures on $X_0$ such that the Bernoulli shift $\Z \actson (X,\mu) = \prod_{n \in \Z} (X_0,\mu_n)$ is nonsingular.

Then precisely one of the following statements holds.

\begin{enumlist}
\item There exists an atom $b \in X_0$ with $\sum_{n \in \Z} (1-\mu_n(\{b\})) < +\infty$. Define $a \in X$ by $a_n = b$ for all $n \in \Z$. Then, $a$ is an atom in $X$ that is fixed by $\Z$. The action of $\Z$ on $X \setminus \{a\}$ is essentially free and dissipative.
\item The action $\Z \actson (X,\mu)$ is essentially free and dissipative.
\item The space $(X,\mu)$ is nonatomic and there exists a Borel set $C_0 \subset X_0$ of positive measure, unique up to a null set, such that $\sum_{n \in \Z} (1-\mu_n(C_0)) < + \infty$ and such that the following holds.
\begin{itemlist}
\item The action $\Z \actson C_0^\Z$ is a weakly mixing Bernoulli shift and its associated flow is infinitely divisible. Moreover, the permutation action $\cS \actson C_0^\Z$ is ergodic and for every ergodic pmp action $\Z \actson (Y,\nu)$, the actions $\Z \actson C_0^\Z$, $\Z \actson C_0^\Z \times Y$ and $\cS \actson C_0^\Z$ have the same associated flow.
\item The action $\Z \actson X \setminus C_0^\Z$ is essentially free and dissipative.
\end{itemlist}
\end{enumlist}
\end{theorem}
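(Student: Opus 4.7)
The plan is to construct $C_0$ via an essential-minimality argument, analyze the three cases according to its structure, and on $C_0^\Z$ establish conservativity, weak mixing, equality of the three associated flows, and finally infinite divisibility by decomposing the finite-permutation action.

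\textbf{Construction of $C_0$ and case split.} Call a Borel set $C\subset X_0$ \emph{admissible} if $\sum_{n\in\Z}(1-\mu_n(C))<+\infty$; by Borel--Cantelli this is equivalent to $\mu$-almost every $x\in X$ having $x_n\in C$ for cofinitely many $n$. The family of admissible sets is closed under finite intersections modulo null (since $1-\mu_n(C\cap C')\leq(1-\mu_n(C))+(1-\mu_n(C'))$), so a standard exhaustion produces an essentially unique minimal admissible Borel set $C_0$, possibly null. If $C_0$ reduces modulo null to a single atom $\{b\}\subset X_0$, then the constant sequence $(b)_{n\in\Z}$ is an atom of $X$ fixed by the shift and the complement is dissipative (case~1). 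If no admissible set has positive $\mu_0$-measure, one checks directly that the whole action is dissipative, e.g.\ by producing a Borel fundamental domain (case~2). Otherwise $(X,\mu)$ is nonatomic and $C_0$ is a nonatomic Borel set of positive measure (case~3).

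\textbf{Dissipativity of the complement and dynamics on $C_0^\Z$.} Let $E=\{x\in X: x_n\in C_0\text{ for cofinitely many }n\}$, which is conull by Borel--Cantelli. On $E\setminus C_0^\Z$, the defect set $F(x)=\{n:x_n\notin C_0\}$ is finite and nonempty and transforms as $F(k\cdot x)=F(x)-k$; thus $\{x\in E\setminus C_0^\Z : \min F(x)=0\}$ is a Borel fundamental domain, giving dissipativity of $\Z\actson X\setminus C_0^\Z$. On $C_0^\Z$ the restricted shift is nonsingular (the Kakutani sum restricted to $C_0$ is dominated by the original), essentially free by \cite[Lemma 2.2]{BKV19}, and conservative by Borel--Cantelli. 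Weak mixing of $\Z\actson C_0^\Z$ and of the diagonal action $\Z\actson C_0^\Z\times(Y,\nu)$ for every ergodic pmp $\Z\actson(Y,\nu)$ follows from a tail $\sigma$-algebra argument together with the Kolmogorov zero--one law applied to the independent $C_0^\Z$-factors, following the strategy of \cite{BKV19,BV20}.

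\textbf{Equality of associated flows and infinite divisibility.} The three actions $\Z\actson C_0^\Z$, $\Z\actson C_0^\Z\times Y$ and $\cS\actson C_0^\Z$ share the same associated flow: the pmp factor $Y$ is absorbed because ergodicity of $\Z\actson Y$ combined with weak mixing of $\Z\actson C_0^\Z$ forces $L^\infty(C_0^\Z\times Y\times\R)^\Z=L^\infty(C_0^\Z\times\R)^\Z$; passing from $\Z$ to $\cS$ is harmless because both groups generate the same $\sigma$-algebra of tail-invariant functions on the Maharam extension of the nonatomic product. For infinite divisibility, fix $L\geq 1$ and partition $\Z=A_1\sqcup\cdots\sqcup A_L$ into $L$ infinite blocks. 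The commuting subgroups $\cS_{A_i}\subset\cS$ act independently on the tensor factors $C_0^{A_i}$, and a further tail-invariance argument shows that the associated flow of $\prod_i\cS_{A_i}\actson C_0^\Z$ equals that of the full $\cS$-action. By the product structure of the Maharam extension, this associated flow is the joint flow (Definition~\ref{def.joint-flow}) of the associated flows $\R\actson Z_i$ of the subactions $\cS_{A_i}\actson C_0^{A_i}$. Using an $\cS$-equivariant identification (the $\cS$-action ignores the labeling of indices), the $Z_i$ are all isomorphic to a common flow $\R\actson Z_1$, so the associated flow is the $L$-fold joint flow of $\R\actson Z_1$, proving infinite divisibility.

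\textbf{Main obstacle.} The principal difficulty is the last identification step: since the measures $(\mu_n)_{n\in A_i}$ genuinely depend on $A_i$, the nonsingular structures on the individual factors $C_0^{A_i}$ are not literally isomorphic, and yet their associated flows under the $\cS_{A_i}$-actions must be shown to be intrinsic -- depending only on the multiset of measure classes and the Kakutani summability, not on the ordering of indices. Extracting this intrinsic symmetry from the Maharam extension of $\cS$ and tying it to the $L$-fold product decomposition is the core technical step of the proof, and is precisely the reason infinite divisibility (rather than the stronger Poisson-flow conclusion discussed in Remark~\ref{rem.concluding-flows}) is the natural output.
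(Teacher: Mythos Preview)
Your case split via ``minimal admissible set'' does not work. Since $X_0$ itself is always admissible (with $\sum_n(1-\mu_n(X_0))=0$), your case~2 (``no admissible set has positive $\mu_0$-measure'') is vacuous; and there exist genuinely dissipative two-point Bernoulli shifts with $\mu_n(0),\mu_n(1)$ bounded away from $0$ and $1$, for which the only admissible set is $X_0$ yet the action belongs to case~2 of the theorem. Admissibility of subsets of $X_0$ simply cannot detect the conservative/dissipative dichotomy. The paper proceeds in the opposite direction: it takes the conservative part $C\subset X$ as the starting datum and proves \emph{a posteriori} that $C=C_0^\Z$ for some $C_0\subset X_0$, by first establishing $L^\infty(C)^{p\Z}=L^\infty(C)^\Z$ for all $p\geq 2$ (via Lemma~\ref{lem:S is in Z} applied to $Y=\Z/p\Z$), using the case $p=2$ to force $L^\infty(C)^\Z$ to be discrete, and then analyzing a minimal projection through the $p$-fold product decompositions $X=\prod_{i=0}^{p-1}Z_{p,i}$. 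Relatedly, your claim that $\Z\actson C_0^\Z$ is ``conservative by Borel--Cantelli'' is unjustified: Borel--Cantelli gives conullity of the set where $x_n\in C_0$ cofinitely often, not recurrence of the shift.

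The step you flag as the main obstacle is indeed the crux, and your sketch does not resolve it. With an arbitrary partition $\Z=A_1\sqcup\cdots\sqcup A_L$ there is no reason the $\cS_{A_i}$-actions on $C_0^{A_i}$ should have isomorphic associated flows. The paper's key idea is to use the \emph{arithmetic} partition $A_i=i+p\Z$: then the shift $\alpha_i$ by $i$ furnishes a genuine nonsingular conjugacy between $p\Z\actson\prod_{n\in p\Z}(X_0,\mu_n)$ and $p\Z\actson\prod_{n\in i+p\Z}(X_0,\mu_n)$, so the $p$ factor flows are isomorphic by construction rather than by an abstract ``intrinsic symmetry'' argument. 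Making this decomposition interact correctly with the $\Z$-flow requires the equality $L^\infty(C\times\R)^\Z=L^\infty(C\times\R)^\cS$, which you gloss as ``both groups generate the same $\sigma$-algebra of tail-invariant functions.'' One inclusion (Lemma~\ref{lem:Z is in S}) is nontrivial: one must show that $\Z$ lies in the closure of $\cS$ in $\Aut(X,\mu)$, and the paper does this via a $1$-cocycle growth estimate showing that non-dissipativity forces the existence of $n_k\to-\infty$, $m_k\to+\infty$ with $H(\mu_{n_k},\mu_{m_k})\to 0$, so that the shift by $1$ is approximated by cyclic permutations of $[n_k,m_k]$.
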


As already suggested by the formulation of Theorem \ref{thm.main-structure-variant}, we again exploit the relation between a Bernoulli shift and the action $\cS \actson (X,\mu)$. This method was discovered in \cite{Kos18,Dan18} and has been further developed in \cite{BKV19,BV20}.

Because of Theorem \ref{thm.main-structure-variant}, to study nonsingular Bernoulli shifts in their full generality, it suffices to consider Bernoulli shifts that are conservative. Also, given any conservative Bernoulli action $G\actson \prod_{g\in G}(C_0,\eta_g)$, we can add a standard Borel space $C_1$ to $C_0$ by putting $X_0=C_0\sqcup C_1$ and by choosing $\mu_g$ to be a family of equivalent measures on $X_0$ such that $\sum_{g\in G}\mu_g(C_1)<+\infty$ and such that $\mu_g|_{C_0}=\mu_g(C_0)\eta_g$. The result is again a nonsingular Bernoulli action $G\actson (X,\mu)=\prod_{g\in G}(X_0,\mu_g)$ with dissipative part $X\setminus C_0^{G}$ and conservative part $C_0^{G}$.

Note that if $\Z\curvearrowright (X,\mu)$ is a conservative ergodic nonsingular Bernoulli shift, it follows from Theorem \ref{thm.main-structure-variant} that it is has stable type in the strongest possible sense. Indeed, whenever $\Z\actson (Y,\nu)$ is an ergodic pmp action, the diagonal action $\Z\actson X\times Y$ is ergodic of the same type, and with the same associated flow as $\mathbb{Z}\curvearrowright (X,\mu)$. This rigidity property for the group $\Z$ is not shared by all countable infinite groups $G$, see \cite[Proposition 7.3]{VW17} and \cite[Remark 6.4]{BKV19}.

When we rephrase Theorem \ref{thm.main-structure-variant} in terms of von Neumann algebras we get the following result.

\begin{corollary}
Every crossed product $L^{\infty}(X)\rtimes \Z$ by a nonsingular Bernoulli shift $\Z\actson (X,\mu)$, decomposes as
\begin{align*}
L^{\infty}(X)\rtimes \Z= N\oplus M,
\end{align*}
where $N$ is a type I von Neumann algebra, and $M$ is an injective factor that is a $p$'th tensor power for every integer $p\geq 2$.
\end{corollary}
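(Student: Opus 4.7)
My plan is to invoke Theorem \ref{thm.main-structure-variant} and combine it with the classification of injective factors by their flow of weights. The first step will be to split $X$ into a dissipative/atomic piece and a conservative weakly mixing piece, giving the direct sum decomposition. The second step will be to verify that the conservative factor piece is a $p$-th tensor power using the infinite divisibility of its associated flow.

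First, I would apply Theorem \ref{thm.main-structure-variant} to obtain a $\Z$-invariant Borel partition $X = X_I \sqcup X_f$, where $X_f$ is either empty (cases 1 and 2) or equals $C_0^\Z$ (case 3), and where $X_I$ carries an essentially free dissipative action together with at most one fixed atom. Since both pieces are $\Z$-invariant, the crossed product splits as $L^\infty(X)\rtimes\Z = N \oplus M$ with $N = L^\infty(X_I)\rtimes \Z$ and $M = L^\infty(X_f)\rtimes \Z$; in the cases where $X_f$ is empty, I would set $M = \C$, which is trivially a $p$-th tensor power for every $p$. A dissipative essentially free action of $\Z$ with fundamental domain $\cW$ has crossed product $L^\infty(\cW) \ovt B(\ell^2(\Z))$, which is type I, and a fixed atom contributes only the type I summand $\C$; so $N$ is of type I. In case 3, the action $\Z \actson C_0^\Z$ is essentially free, weakly mixing and nonsingular, so amenability of $\Z$ makes $M$ an injective factor, and by the Connes-Takesaki identification its flow of weights equals the associated flow of the action, which Theorem \ref{thm.main-structure-variant} says is infinitely divisible.

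For the remaining step, I fix an integer $p \geq 2$ and use Definition \ref{def.infinitely-divisible} to write the flow of weights of $M$ as the joint flow of $p$ copies of some flow $\R \actson Z_1$. I would then appeal to the classification of injective factors (Krieger, Connes, Haagerup, Connes-Takesaki) to realize $Z_1$ as the flow of weights of an injective factor $N_1$ of the same semifinite/type III class as $M$. The flow of weights of $N_1^{\ovt p}$ equals the $p$-fold joint flow of $Z_1$, hence that of $M$, so by the uniqueness half of the classification $N_1^{\ovt p} \cong M$. I expect the hardest part to be checking that the type of $N_1$ can be chosen to match that of $M$: this requires a case-by-case verification, the only non-trivial input being the observation that a $p$-fold joint flow is properly ergodic only when each copy is already properly ergodic (with analogous stability properties for the periodic and translation-on-$\R$ cases), which pins down the qualitative type of $\R \actson Z_1$ from the qualitative type of the flow of weights of $M$.
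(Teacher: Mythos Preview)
Your proposal is correct and follows the paper's approach: the corollary is stated there without proof as an immediate consequence of Theorem \ref{thm.main-structure-variant} together with the classification of injective factors by their flow of weights (the equivalence between infinite divisibility of the flow and being a $p$-th tensor power is already asserted in the introduction). One minor point: in cases 1 and 2 you should take $M=0$ rather than $M=\C$, since the corollary implicitly allows either summand to vanish; otherwise your decomposition does not literally equal $L^\infty(X)\rtimes\Z$.
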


In addition to the structure result of Theorem \ref{thm.main-structure-variant}, we are able to give a complete type classification for conservative nonsingular Bernoulli shifts, distinguishing between types II$_{1}$, II$_{\infty}$ and III. The formulation of the result is similar to \cite[Theorem 4.1]{BV20}, but the important difference is that in our Theorem \ref{thm:type classification Z}, we make no assumptions on the behavior of the Radon-Nikodym derivatives $d\mu_n/d\mu_0$.

\begin{theorem}\label{thm:type classification Z}
Let $\mathbb{Z}\actson(X,\mu)=\prod_{n\in \mathbb{Z}}(X_0,\mu_n)$ be a conservative nonsingular Bernoulli shift. Then $\mathbb{Z}\actson (X,\mu)$ is weakly mixing and the following holds.
\begin{enumlist}
\item $\mathbb{Z}\curvearrowright (X,\mu)$ is of type II$_{1}$ if and only if there exists a probability measure $\nu\sim \mu_0$ on $X_0$ such that $\nu^{\mathbb{Z}}\sim\mu$.
\item $\mathbb{Z}\curvearrowright (X,\mu)$ is of type II$_{\infty}$ if and only if there exists a $\sigma$-finite measure $\nu\sim \mu_0$ on $X_0$ and Borel sets $\mathcal{U}_n\subset X_0$ such that $\nu(\mathcal{U}_n)<+\infty$ for all $n\in \mathbb{Z}$ and such that
\begin{align*}
\sum_{n\in\Z}\mu_n(X_0\setminus \mathcal{U}_n)<+\infty,\;\; \sum_{n\in \Z}H^{2}\left(\mu_n, \nu(\mathcal{U}_n)^{-1}\nu|_{\mathcal{U}_n}\right)<+\infty,\;\;\sum_{n\in \Z}\nu(X_0\setminus \mathcal{U}_n)=+\infty.
\end{align*}
\item $\mathbb{Z}\curvearrowright (X,\mu)$ is of type III in all other cases.
\end{enumlist}
\end{theorem}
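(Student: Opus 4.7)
The plan is to extract weak mixing from Theorem~\ref{thm.main-structure-variant} and then reduce the type classification for the shift to the corresponding classification for the permutation action $\cS$, where Hewitt--Savage/de~Finetti style arguments become available. Since the action is conservative, it cannot fall into cases~(1) or~(2) of Theorem~\ref{thm.main-structure-variant} (both produce a fundamental domain on a cofinite part of $X$), so we are in case~(3) with $C_0 = X_0$ up to a null set. Weak mixing of $\Z \actson (X,\mu)$ is part of that statement, and moreover $\cS \actson (X,\mu)$ is ergodic and has the same associated flow as the shift; since $\Z$ and $\cS$ are both amenable, they therefore share the same Krieger type.

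For type~II$_1$, the direction $(\Leftarrow)$ is immediate: $\nu^\Z$ is a $\Z$-invariant probability measure, and $\mu \sim \nu^\Z$ is equivalent, via the Kakutani criterion, to $\sum_n H^2(\mu_n,\nu) < +\infty$, which is forced by the hypothesis. For $(\Rightarrow)$, if the shift is of type~II$_1$ then so is the $\cS$-action, which yields an equivalent $\cS$-invariant probability measure $\lambda$ on $(X,\mu)$. Being exchangeable, $\lambda$ is by Hewitt--Savage an integral of iid product measures; ergodicity of $\cS \actson (X,\mu)$ collapses this integral to a point mass at some $\nu^\Z$, and $\nu \sim \mu_0$ follows by projecting onto the $0$-th coordinate.

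For type~II$_\infty$, the backward direction is a construction. The first listed condition, combined with Borel--Cantelli, shows that $\mu$-a.e.\ $x$ has $x_n \in \mathcal{U}_n$ for all but finitely many $n$; the Hellinger condition ensures via Kakutani that $\mu$ restricted to this full-measure set is equivalent to the product of the normalised probabilities $\nu(\mathcal{U}_n)^{-1}\nu|_{\mathcal{U}_n}$. Multiplying cylinder-level densities by the formal factors $\nu(\mathcal{U}_n)$ then produces an equivalent shift-invariant $\sigma$-finite measure $\tilde\mu$, ``morally'' equal to $\nu^\Z$; a direct cocycle computation confirms that any equivalent shift-invariant measure of product-type density must satisfy $g_n\, d\mu_n = d\nu$ for all $n$, forcing exactly this form. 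The third condition ensures $\tilde\mu(X)=+\infty$, since otherwise a converse Borel--Cantelli argument would make $\tilde\mu$ finite. For the forward direction, type~II$_\infty$ transfers to the $\cS$-action, giving an equivalent $\cS$-invariant $\sigma$-finite $\tilde\mu$. An infinite analogue of Hewitt--Savage, obtained by applying the classical statement to the ergodic components of the Maharam extension of $\cS$ (whose flow of weights is $\R \actson \R$ in the II$_\infty$ case), extracts a $\sigma$-finite reference measure $\nu$ on $X_0$ together with exhausting sets $\mathcal{U}_n$, and a truncation argument then verifies the three stated conditions. The main obstacle is precisely this extraction step: Hewitt--Savage in its standard form applies only to probability measures, and adapting it to $\sigma$-finite exchangeable measures requires the control provided by passing to the Maharam extension and carefully matching the product structure with the original data $(\mu_n)$.

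Finally, (3) is the complementary case: since the essentially free ergodic nonsingular action $\Z \actson (X,\mu)$ falls into exactly one of the types~II$_1$, II$_\infty$, III, anything not captured by~(1) or~(2) must be type~III.
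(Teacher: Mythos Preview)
Your overall strategy of transferring the type question from $\Z$ to the permutation group $\cS$ via Theorem~\ref{thm.main-structure-variant} is exactly what the paper does, and your type~II$_1$ argument via de~Finetti is a correct and slightly slicker alternative to the paper's cocycle computation.

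The genuine gap is in the forward direction for type~II$_\infty$, which you yourself flag as the ``main obstacle.'' An ``infinite analogue of Hewitt--Savage'' for $\sigma$-finite exchangeable measures is not a standard tool, and your proposed workaround via the Maharam extension is not made into an argument: you would need to identify concrete probability spaces on which the classical theorem applies and then reassemble the data into a single $\sigma$-finite $\nu$ and sets $\cU_n$ satisfying the three conditions. None of this is carried out, and it is not clear that it can be without essentially reproving what the paper does directly.

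The paper avoids de~Finetti entirely. Starting from semifiniteness, one has a Borel $F:X\to\R$ with $\log\tfrac{d(\mu\circ\sigma)}{d\mu}(x)=F(x)-F(\sigma x)$ for all $\sigma\in\cS$. The key extra ingredient, which you do not invoke, is that the subgroup $\cS_1\subset\cS$ fixing a single coordinate already acts ergodically on the remaining product (this is proved by conjugating $\cS\actson X$ to $\cS_1\actson Z$ via a partial shift). Ergodicity of $\cS_1$ forces the difference $F(x,z)-F(x',z)$ to be essentially independent of $z$, producing a one-variable function $\beta:X_0\to\R$ with $F(x)-F(y)=\beta(x_1)-\beta(y_1)$ whenever $x,y$ differ only in the first coordinate. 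One then sets $d\nu/d\mu_1=\exp(-\beta)$ and observes that the tail boundary flow of the pushforwards $(\alpha_n+\beta)_*\mu_n$ is the translation action $\R\actson\R$; a quantitative criterion for this (of the form $\sum_n\int T_\kappa(\alpha_n+\beta-t_n)^2\,d\mu_n<\infty$) then yields the sets $\cU_n$ and all three conditions in one stroke. This replaces your missing ``infinite de~Finetti'' step by a concrete coboundary/tail-boundary computation.
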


A key point in relating the Bernoulli shift $\Z \actson (X,\mu)$ to the permutation action $\cS \actson (X,\mu)$ is the following lemma. The proof uses a key idea of \cite[Theorem 3.3]{BKV19}.

\begin{lemma}\label{lem:Z is in S}
Let $\Z\curvearrowright (X,\mu)=\prod_{n\in \mathbb{Z}}(X_0,\mu_n)$ be a nonsingular Bernoulli shift that is essentially free and not dissipative. When viewing $\mathbb{Z}$ and $\mathcal{S}$ as subgroups of $\Aut(X,\mu)$ we have that $\Z$ belongs to the closure of $\mathcal{S}$.
\end{lemma}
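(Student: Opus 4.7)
The plan is to construct an explicit sequence of finite permutations $\rho_N\in\cS$ converging to the shift $T\in\Z$ in the Polish topology of $\Aut(X,\mu)$. That topology is conveniently tested via the Koopman representation $\pi(\vphi)f=(d\mu\circ\vphi^{-1}/d\mu)^{1/2}\,f\circ\vphi^{-1}$ on $L^2(X,\mu)$: $\rho_N\to T$ is equivalent to $\pi(\rho_N)\to\pi(T)$ strongly. Testing on bounded cylindrical $f$ supported on a finite window $F\subset\Z$, and requiring $\rho_N(n)=n+1$ on $F$, the functions $f\circ\rho_N^{-1}$ and $f\circ T^{-1}$ agree for $N$ large, so everything reduces to $L^2$-convergence of the square roots of the Radon--Nikodym derivatives. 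Kakutani's product formula for the Hellinger distance between product measures then turns this into the single analytic requirement
\begin{equation*}
\sum_{n\in\Z}H^{2}\bigl(\mu_{\rho_N^{-1}(n)},\,\mu_{n-1}\bigr)\;\xrightarrow[N\to\infty]{}\;0
\end{equation*}
on top of the combinatorial condition $\rho_N(n)=n+1$ on any prescribed bounded set.

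The obvious first candidate, the cyclic shift of $\{-N,\dots,N\}$, produces a Hellinger sum equal to a vanishing tail of the nonsingularity series $\sum_n H^{2}(\mu_n,\mu_{n-1})<\infty$ plus a single wrap-around term $H^{2}(\mu_N,\mu_{-N-1})$ that in general does \emph{not} vanish: nonsingularity alone provides no comparison between $\mu_n$ at $+\infty$ and at $-\infty$. This is precisely where I would bring in conservativity. By Halmos' recurrence criterion applied to the product form $d\mu\circ T^k/d\mu(x)=\prod_n (d\mu_{n+k}/d\mu_n)(x_n)$, conservativity of the shift is equivalent to Hopf's divergence
\begin{equation*}
\sum_{k\ge 1}\prod_{n\in\Z}\bigl(1-H^{2}(\mu_{n+k},\mu_n)\bigr)\;=\;+\infty,
\end{equation*}
supplying infinitely many displacements $k$ with $\sum_n H^{2}(\mu_{n+k},\mu_n)<+\infty$ and hence with $H^{2}(\mu_{m+k},\mu_m)\to 0$ as $|m|\to\infty$.

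Following the idea of \cite[Theorem 3.3]{BKV19}, for each $N$ I would choose such a $k=k_N$ and build $\rho_N$ as a product of a central cycle sending $n\mapsto n+1$ across $\{-N+1,\dots,N-1\}$ together with a bounded number of auxiliary cycles placed at positions $|m|\gg N$. These auxiliary cycles absorb the wrap-around by routing the ``excess'' through a few jumps of size $k_N$, so that the Hellinger sum for this $\rho_N$ decomposes into a tail of $\sum_n H^{2}(\mu_n,\mu_{n-1})$ (small once $N$ is large) plus $O(1)$ terms of the form $H^{2}(\mu_{m+k_N},\mu_m)$ (small once $|m|$ is large enough for the chosen $k_N$). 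Choosing $N$, then $k_N$, then the auxiliary positions $m$ in this order drives the whole sum to zero and yields $\pi(\rho_N)\to\pi(T)$.

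The hard part will be the combinatorics: arranging finite-support cycles that genuinely close into a permutation of $\Z$, that match $n\mapsto n+1$ on the central block, and that route the unavoidable wrap-around through only $O(1)$ many $k_N$-jumps placed at sites where those jumps carry arbitrarily small Hellinger weight. Since the construction of \cite[Theorem 3.3]{BKV19} uses only base-space-agnostic ingredients (Kakutani's product formula and Halmos/Hopf recurrence), I expect the transfer from the two-point base to a general standard Borel base $X_0$ to be essentially notational.
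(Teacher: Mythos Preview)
Your reduction to controlling $\sum_n H^2(\mu_{\rho_N^{-1}(n)},\mu_{n-1})$ is exactly right, and you correctly isolate the wrap-around term as the crux. But the way you propose to kill it does not work. First, the ``Hopf divergence'' you write down is not what Halmos' theorem gives: conservativity is equivalent to $\sum_k d(\mu\circ T^k)/d\mu=+\infty$ a.e., not to $\sum_k\prod_n(1-H^2(\mu_{n+k},\mu_n))=+\infty$. More importantly, the only consequence you draw from it---that there are infinitely many $k$ with $\sum_n H^2(\mu_{n+k},\mu_n)<\infty$ and hence $H^2(\mu_{m+k},\mu_m)\to 0$ as $|m|\to\infty$---holds for \emph{every} $k$ by nonsingularity of $T^k$ alone. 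So conservativity has done no work, and what you have extracted cannot kill the wrap-around: a $k_N$-jump placed at $|m|\gg N$ only compares $\mu_m$ with $\mu_{m+k_N}$, both on the same side of the origin, and no bounded number of such jumps bridges $[-N,N]$. If instead you let $k_N\sim 2N$ and place the jump with one endpoint near $-N$ and the other near $+N$, you are back to needing $H(\mu_{-N},\mu_{+N})$ small, which is precisely the left/right comparison that nonsingularity does not provide.

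The paper's argument uses non-dissipativity in a different and sharper way. One shows directly that if no sequences $n_k\to-\infty$, $m_k\to+\infty$ with $H(\mu_{n_k},\mu_{m_k})\to 0$ exist, then $H(\mu_n,\mu_m)\geq\delta$ for all $n\leq -N$, $m\geq N$, which forces the $1$-cocycle $c_k=\bigl(\sqrt{d\mu_j/d\mu_0}-\sqrt{d\mu_{j-k}/d\mu_0}\bigr)_j$ to satisfy $\|c_k\|^2\geq 2(k-2N)\delta^2$; then $\sum_k\exp(-\|c_k\|^2/2)<\infty$ and \cite[Theorem 4.1]{VW17} gives dissipativity, a contradiction. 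With such $(n_k,m_k)$ in hand, a \emph{single} finite cycle $\sigma_k$ that shifts $n\mapsto n-1$ on $[n_k+2,m_k]$ and sends $n_k+1\mapsto m_k$ makes $\sigma_k\circ\alpha$ differ from the identity only through tail terms $\prod_{n\leq n_k\text{ or }n>m_k}(1-H^2(\mu_{n-1},\mu_n))$ and the single wrap-around factor $1-H^2(\mu_{n_k},\mu_{m_k})$, both of which tend to $1$. No auxiliary cycles or routing is needed; the entire construction is the one long jump whose smallness is guaranteed by the cocycle argument, not by Hopf recurrence.
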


\begin{proof}
We start by proving the following claim.

\textbf{Claim.} There exist sequences $n_k\rightarrow -\infty, m_k\rightarrow+\infty$ such that
\begin{align}\label{eq:subsequence Hellinger distance}
\lim_{k\rightarrow \infty} H\bigl(\mu_{n_k},\mu_{m_k}\bigr)=0.
\end{align}

We prove this claim using essentially the same argument as the one given in \cite[Theorem 3.3]{BKV19}. Indeed if such sequences do not exist, there are $\delta>0$ and $N\in \mathbb{N}$ such that for all $n\leq -N, m\geq N$ we have that $H(\mu_n,\mu_m)>\delta$. Define the $1$-cocycle
\begin{align*}
c\colon \mathbb{Z}\rightarrow \ell^{2}(\mathbb{Z})\otimes L^{2}(X_0,\mu_0) : c_n(k,\cdot)=\sqrt{d\mu_{k}/d\mu_0} -\sqrt{d\mu_{k-n}/d\mu_{0}} \; ,
\end{align*}
as introduced in \cite[Theorem 3.1]{VW17}. We have that
\begin{align*}
\|c_k\|^2=\|c_{-k}\|^2=2 \sum_{m\in \mathbb{Z}}H^{2}(\mu_{m+k},\mu_m)\geq 2 \sum_{m=N-k}^{-N}H^{2}(\mu_{m+k},\mu_m)\geq 2(k-2N)\delta^2
\end{align*}
for every $k\geq 2N$. Therefore $\sum_{k\in \mathbb{Z}}\exp\left(-\|c_k\|^2/2\right)<+\infty$ and it follows from \cite[Theorem 4.1]{VW17} that the action $\mathbb{Z}\actson (X,\mu)$ is dissipative, which is in contradiction with our assumptions.

Let $n_k\rightarrow-\infty$ and $m_k\rightarrow +\infty$ be sequences such that \eqref{eq:subsequence Hellinger distance} holds. We may assume that $n_k<0<m_k$ for all $k\in \mathbb{N}$. Let $\alpha\in \Aut(X,\mu)$ denote the shift by one, i.e. $(\alpha(x))_k=x_{k-1}$. To prove the lemma, it suffices to show that $\alpha$ belongs to the closure of $\mathcal{S}$.

For each $k\in \N$ we define the permutation $\sigma_k\in \mathcal{S}$ by
\begin{align*}
\sigma_k(n)=\begin{cases}n &\;\;\text{if}\;\; n\leq n_k \;\;\text{or}\;\; n\geq 1+m_k\\
n-1 &\;\;\text{if}\;\; 2+n_k\leq n\leq m_k\\
m_k &\;\;\text{if}\;\; n=1+n_k\end{cases}
\end{align*}
and write $\beta_k=\sigma_k\circ \alpha\in \Aut(X,\mu)$. Define the unitary operators $\theta_k\colon L^2(X,\mu)\rightarrow L^{2}(X,\mu)$ by
\begin{align*}
\theta_k(F)(x)=\sqrt{\frac{d(\mu\circ \beta_k)}{d\mu}(x)}F(\beta_k(x)).
\end{align*}
We will show that $\|\theta_k(F)-F\|_2\rightarrow 0$ for all $F$ in a total subset of $L^{2}(X,\mu)$, so that $\beta_k\rightarrow
\id$ as $k\rightarrow +\infty$. This then concludes the proof of the lemma.

Take $F\in L^{\infty}(X,\mu)$ depending only on the coordinates $x_n$, for $|n|\leq N$, for some $N\in \mathbb{N}$. With unconditional convergence almost everywhere we have that
\begin{align*}
\frac{d(\mu\circ \beta_k)}{d\mu}(x)=\prod_{n=-\infty}^{n_k-1}\frac{d\mu_{n+1}}{d\mu_n}(x_n)\cdot \frac{d\mu_{m_k}}{d\mu_{n_k}}(x_{n_k}) \cdot \prod_{n=m_k}^\infty \frac{d\mu_{n+1}}{d\mu_n}(x_n).
\end{align*}
Therefore
\begin{align*}
\int_{X}\sqrt{d(\mu\circ \beta_k) / d\mu} \, d\mu= \bigl(1-H^{2}(\mu_{n_k},\mu_{m_k})\bigr) \, \prod_{\substack{n\leq n_k \;\;\text{or}\;\; n\geq 1+m_k}}\left(1-H^2(\mu_{n-1},\mu_n)\right)\rightarrow 1
\end{align*}
as $H^2(\mu_{n-1},\mu_n)$ is summable and by our choice of $n_k,m_k$. We see that for $n_k<-N$ and $m_k>N$ we have that
\begin{align*}
\|\theta_k(F)-F\|_2&\leq \|F\|_\infty \, \bigl\|1-\sqrt{d(\mu\circ \beta_k) / d\mu}\bigr\|_2,
\end{align*}
which converges to $0$ as $k$ tends to infinity.
\end{proof}

\begin{lemma}\label{lem:S is in Z}
Let $\Z\actson (X,\mu)=\prod_{n\in \mathbb{Z}}(X_0,\mu_n)$ be a nonsingular Bernoulli shift that is essentially free and not dissipative. Let $C\subset X$ denote its conservative part. Let $\mathbb{Z}\curvearrowright (Y,\nu)$ be any ergodic pmp action and consider the diagonal product action $\mathbb{Z}\actson X\times Y$. Then the following holds.
\begin{enumlist}
\item $C$ is $\mathcal{S}$-invariant.
\item The Maharam extensions satisfy
\begin{align*}
L^{\infty}(C\times \R\times Y)^{\Z}= L^{\infty}(C\times \R)^{\cS}\ovt 1=L^{\infty}(C\times \R)^{\Z}\ovt 1.
\end{align*}
\end{enumlist}
\end{lemma}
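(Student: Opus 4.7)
The proof separates into the two items.

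\textbf{Part (1).} I invoke Hopf's criterion:
\[
C \;=\; \Bigl\{ x\in X\;:\; \sum_{n\in\Z}\tfrac{d(\mu\circ\alpha^n)}{d\mu}(x) = +\infty \Bigr\}
\]
up to a $\mu$-null set, where $\alpha$ is the shift. For $\sigma\in\cS$ with finite support $F$, the identity $\alpha^n\sigma = \sigma_n \alpha^n$, with $\sigma_n := \alpha^n\sigma\alpha^{-n}$ the finite permutation with support $F+n$, combined with the chain rule gives
\[
\tfrac{d(\mu\circ\alpha^n)}{d\mu}(\sigma x) \;=\; \tfrac{d(\mu\circ\sigma)}{d\mu}(x)^{-1}\cdot\tfrac{d(\mu\circ\sigma_n)}{d\mu}(\alpha^n x)\cdot\tfrac{d(\mu\circ\alpha^n)}{d\mu}(x) \; .
\]
The middle factor equals $\prod_{j\in F}\tfrac{d\mu_{\sigma(j)+n}}{d\mu_{j+n}}(x_j)$ and depends only on the finitely many coordinates $(x_j)_{j\in F}$. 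Triangle inequality for the Hellinger distance applied to the nonsingularity condition $\sum_m H^2(\mu_m,\mu_{m+1})<+\infty$ yields $\sum_n H^2(\mu_{\sigma(j)+n},\mu_{j+n})<+\infty$ for each $j\in F$; a Borel--Cantelli argument then shows that outside a $\mu$-null set this middle factor stays bounded between two positive constants uniformly in $n$. Hence the Hopf series at $\sigma x$ converges iff the Hopf series at $x$ does, yielding $\sigma C = C$ almost everywhere.

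\textbf{Part (2).} Set
\[
A := L^\infty(C\times\R\times Y)^\Z , \quad B := L^\infty(C\times\R)^\cS \ovt 1 , \quad D := L^\infty(C\times\R)^\Z \ovt 1 ,
\]
to be shown equal. The inclusion $D\subset A$ is immediate: a $\Z$-invariant function of $(c,t)$, tensored with $1\in L^\infty(Y)$, remains diagonal-$\Z$-invariant since $\nu$ is preserved. The inclusion $B\subset A$ follows from Lemma \ref{lem:Z is in S}: continuity of the Maharam extension lifts $\Z\subset\overline{\cS}$ to $\Aut(X\times\R,\mu\times\lambda)$, so every $\cS$-invariant $F\in L^\infty(C\times\R)$ is automatically $\Z$-invariant, and tensoring with $1$ yields an element of $A$. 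The inclusion $A\subset D$ is the stability-of-associated-flow statement: ergodicity of $\Z\actson Y$ combined with the mixing properties of $\Z\actson C$ that flow from part (1) and Lemma \ref{lem:Z is in S} force any $\Z$-invariant function on the Maharam extension of $C\times Y$ to be independent of $Y$. The final inclusion $D\subset B$ says that every $\Z$-invariant function on $C\times\R$ is automatically $\cS$-invariant: for $\sigma\in\cS$ and $F\in L^\infty(C\times\R)^\Z$, one exploits the relation $\sigma\alpha\sigma^{-1} = \tau_\sigma\alpha$ with $\tau_\sigma\in\cS$ (cf.\ the computation of part (1)), together with the $\cS$-invariance of $C$, to propagate $\Z$-invariance back to $\cS$-invariance by an approximation argument in the Polish topology of $\Aut(C\times\R,\mu|_C\times\lambda)$.

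\textbf{Main obstacle.} The hardest step is $D\subset B$. Lemma \ref{lem:Z is in S} cleanly gives $\cS$-invariant $\Rightarrow$ $\Z$-invariant via continuity, but the reverse implication for $\Z$-invariants on the Maharam extension requires a genuinely new input, essentially a Hewitt--Savage type 0-1 law for the nonsingular Bernoulli shift, exploiting the fine interplay between the $\Z$- and $\cS$-actions on the conservative part. A secondary subtlety lies in the inclusion $A\subset D$, which must be organised so as to avoid circular appeal to Theorem \ref{thm.main-structure-variant}, whose proof this lemma itself underpins.
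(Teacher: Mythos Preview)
Your Part (1) is essentially the paper's argument: Hopf's criterion plus a Borel--Cantelli bound on the ``cross-terms'' coming from the nonsingularity condition. The paper phrases it via transpositions $\sigma_n$ and the sets $A_k^\eta$, $B_k^\eta$, but the content is the same. One subtlety you gloss over: the middle factor is not bounded uniformly in $n$ for a.e.\ $x$; rather, the sum of the Hopf terms over the \emph{bad} $n$'s is finite a.e., while on the \emph{good} $n$'s the factor is bounded. This is what \eqref{eq:integrable sum}--\eqref{eq:convergence on subsets W} in the paper make precise.

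Part (2) has a genuine gap, and your chain of inclusions is organised differently from the paper in a way that creates the problem. The paper's single hard step is
\[
L^\infty(C\times\R\times Y)^\Z \subset L^\infty(C\times\R)^\cS \ovt L^\infty(Y),
\]
obtained by repeating the proof of \cite[Lemma 3.1]{BKV19} using the ergodic theorem of \cite[Theorem A.1]{Dan18}. From this, together with $L^\infty(C\times\R)^\cS \subset L^\infty(C\times\R)^\Z$ (your $B\subset D$, via Lemma \ref{lem:Z is in S}) and ergodicity of $\Z\actson Y$, all three equalities fall out at once; in particular $D\subset B$ is just the case $Y=\{\mathrm{pt}\}$ of the hard inclusion.

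Your proposed route for $D\subset B$ via ``$\sigma\alpha\sigma^{-1}=\tau_\sigma\alpha$ and approximation in the Polish topology'' does not work: this relation only tells you that $\sigma F$ is invariant under a conjugate of $\Z$, not that $\sigma F=F$, and there is no approximation $\cS\subset\overline{\Z}$ to invoke (Lemma \ref{lem:Z is in S} goes the other way). Your argument for $A\subset D$ appeals to ``mixing properties of $\Z\actson C$'', but at this stage nothing of the sort is available---ergodicity and weak mixing of $\Z\actson C$ are \emph{consequences} of this lemma, proven later in Theorem \ref{thm.main-structure-variant}, so invoking them here would be circular. The actual input you need is the ergodic theorem for nonsingular $\Z$-actions in \cite{Dan18}, which lets you average along $\Z$ and conclude $\cS$-invariance of the limit directly, without assuming anything about the ergodic behaviour of $\Z\actson C$.
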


\begin{proof}
Let $\alpha_k\in \Aut(X,\mu)$ denote the translation by $k$, i.e. $(k\cdot x)_m=x_{m-k}$. We proceed as in the proof of \cite[Lemma 3.1]{BKV19} and show that the dissipative part $D\subset X$ given by
\begin{align*}
D=\Bigl\{x\in X \Bigm| \sum_{k\in \Z}\frac{d(\mu\circ \alpha_k)}{d\mu}(x)<+\infty\Bigr\}
\end{align*}
is $\mathcal{S}$-invariant. It suffices to show that $D$ is invariant under the permutation $\sigma_n\in \mathcal{S}$ that interchanges the coordinate $0$ and $n$.
Fix $n\in \mathbb{Z}\setminus \{0\}$. For each $\eta>0$ and each $k\in \mathbb{Z}$ we define
\begin{equation}\label{eq:definition A and B}
\begin{split}
& A^{\eta}_k= \bigl\{x\in X_0 \bigm| \; \bigl|\sqrt{d\mu_{n+k}/d\mu_k}(x)-1\bigr| \geq \eta \bigr\} \;\; ,\\
& B^{\eta}_k= \bigl\{x\in X_0 \bigm| \; \bigl|\sqrt{d\mu_k / d\mu_{n+k}}(x)-1\bigr|\geq \eta \bigr\} \; .
\end{split}
\end{equation}
For every $\eta>0$ we have that
\begin{align*}
\sum_{k\in \mathbb{Z}}\mu_k(A^{\eta}_k) &= \sum_{k\in \mathbb{Z}}\int_{A^{\eta}_k}\dfrac{d\mu_k}{d\mu_0}d\mu_0 \leq \eta^{-2}\sum_{k\in \mathbb{Z}}\int_{A^{\eta}_k}\dfrac{d\mu_k}{d\mu_0} \, \bigl|\sqrt{d\mu_{n+k}/d\mu_k}-1\bigr|^{2}d\mu_0\\
&\leq 2 \eta^{-2}\sum_{k\in \mathbb{Z}}H^{2}(\mu_{n+k},\mu_k)<+\infty.
\end{align*}
This means that the set
$$A_\eta := \{x \in X \mid x_k \in X_0 \setminus A^\eta_k \;\;\text{for all $k \in \Z$}\;\}$$
has positive measure, $\mu(A_\eta) > 0$. From the nonsingularity of $\mathbb{Z}\actson (X,\mu)$ it follows that $\mu(\al_m(A_\eta)) > 0$ for every $m \in \Z$. Since $x \in \al_m(A_\eta)$ if and only if $x_k \in X_0 \setminus A^\eta_{k-m}$ for all $k \in \Z$, we conclude that $\sum_{k\in \Z}\mu_{k+m}(A^{\eta}_k)<+\infty$ for every $m\in\mathbb{Z}$.

Similarly we have that $\sum_{k\in \Z}\mu_{k+m}(B^{\eta}_k)<+\infty$ for every $\eta>0$ and every $m\in\mathbb{Z}$. Write $C^\eta_k=A^\eta_k\cup B^\eta_k$. For $x\in X_0$, we define $W_x^{\eta}=\{k\in \mathbb{Z} \mid x\in C_k^\eta\}$. For $m\in \mathbb{Z}$, denote $\pi_m\colon X\rightarrow X_0$ for the coordinate projection $\pi_m(x)=x_m$. For any $m\in \mathbb{Z}$ we have that
\begin{align}\label{eq:integrable sum}
\begin{aligned}
\int_{X}\sum_{k\in W_{x_m}^\eta}\frac{d(\mu\circ \alpha_k)}{d\mu}d\mu&=\int_{X}\sum_{k\in \mathbb{Z}}1_{C_k^\eta}\circ \pi_m\frac{d(\mu\circ \alpha_k)}{d\mu}d\mu
=\sum_{k\in \mathbb{Z}}\int_X 1_{C^\eta_k}\circ \pi_{m+k} d\mu\\
&=\sum_{k\in \mathbb{Z}}\mu_{k+m}\left(C_{k}^\eta\right)<+\infty.
\end{aligned}
\end{align}
Therefore we have that
\begin{align}\label{eq:convergence on subsets W}
\sum_{k\in W_{x_0}^1\cup W_{x_n}^1}\frac{d(\mu\circ\alpha_k)}{d\mu}(x)<+\infty \;\;\text{for a.e.  $x\in X$}.
\end{align}
Borrowing some notation form \cite{BKV19}, we write
\begin{align*}
D_{n,m}(x,y)=\frac{d\mu_n}{d\mu_m}(y) \, \frac{d\mu_m}{d\mu_n}(x) \; \; ,\;\;\text{for $n,m\in \mathbb{Z}$ and $x,y\in X_0$.}
\end{align*}
Note that
\begin{align}\label{eq:permutation translation mixture}
\frac{d(\mu\circ \alpha_k)}{d\mu}(\sigma_n(x))=\frac{d(\mu\circ \alpha_k)}{d\mu}(x) \, D_{k,k+n}(x_0,x_n) \, D_{n,0}(x_0,x_n).
\end{align}
By the definition of $W_{x_i}^{1}$ and the sets $A_k^{\eta}$ and $B_k^{\eta}$ in \eqref{eq:definition A and B}, we have that $1/16\leq D_{k,k+n}(x_0,x_n)\leq 16$ whenever $k\notin W_{x_0}^1\cup W_{x_n}^1$. So it follows from \eqref{eq:convergence on subsets W} and \eqref{eq:permutation translation mixture} that $\mathcal{D}$ is invariant under $\sigma_n$.

We now prove point 2. Let $G\actson (Y,\nu)$ be an ergodic pmp action. One can repeat the proof of \cite[Lemma 3.1]{BKV19}, making use of the ergodic theorem established in \cite[Theorem A.1]{Dan18}, to conclude that
\begin{equation}\label{eq.first-inclu}
L^{\infty}(C\times \R\times Y)^{\Z}\subset L^{\infty}(C\times \R)^{\cS}\ovt L^{\infty}(Y) \; .
\end{equation}
Write $\lambda$ for the Lebesgue measure on $\R$. As the Maharam extension map
\begin{align*}
\Aut(X, \mu) \rightarrow \Aut(X\times \R,\mu\times \lambda) : \vphi \mapsto \widetilde{\vphi}
\end{align*}
is continuous, it follows from the $\cS$-invariance of $C$ and Lemma \ref{lem:Z is in S} that
\begin{align*}
L^{\infty}(C\times \R)^{\cS}\subset L^{\infty}(C\times \R)^{\Z} \; .
\end{align*}
In combination with \eqref{eq.first-inclu}, we get that
\begin{align*}
L^{\infty}(C\times \R\times Y)^{\Z}\subset L^{\infty}(C\times \R)^{\Z}\ovt L^{\infty}(Y) \; .
\end{align*}
As $\Z\actson (Y,\nu)$ is ergodic, the $\Z$-invariant elements of $L^{\infty}(C\times \R)^\Z \ovt L^{\infty}(Y)$ are contained in $L^{\infty}(C\times \R)^{\Z}\ovt 1$. The converse inclusion $L^{\infty}(C\times \R)^{\Z}\ovt 1\subset L^{\infty}(C\times \R\times Y)^{\Z}$ holds trivially, proving the second statement of the Lemma.
\end{proof}

\begin{remark}
Except for the point where we invoke Lemma \ref{lem:Z is in S}, the proof of Lemma \ref{lem:S is in Z} remains valid for a nonsingular Bernoulli action $G\curvearrowright \prod_{g\in G}(X_0,\mu_g)$ of any countable infinite amenable group $G$, as long as also the right Bernoulli action is nonsingular, e.g.\ when $G$ is abelian. However, we were unable to prove an analogue of Lemma \ref{lem:Z is in S} for arbitrary abelian groups. That is the main reason why this section is restricted to the group of integers. Note that it is nevertheless straightforward to generalize our results from $\Z$ to virtually cyclic abelian groups.
\end{remark}

\begin{proof}[Proof of Theorem \ref{thm.main-structure-variant}]
First assume that $(X,\mu)$ admits an atom $d \in X$. Then $d_n \in X_0$ is an atom for every $n \in \Z$ and $\sum_{n \in \Z} (1-\mu_n(\{d_n\})) < +\infty$. Writing
$$\cU = \{x \in X \mid x_n = d_n \;\;\text{for all but finitely many $n \in \Z$}\;\} \; ,$$
it follows that $\mu(X \setminus \cU) = 0$. Since the shift is nonsingular, the set $1 \cdot \cU \cap \cU$ has measure $1$. We thus find $N \in \N$ such that $d_{n-1} = d_n$ for all $|n| \geq N$.

There are now two possibilities. Either we find an atom $b \in X_0$ such that $d_n = b$ for all $|n| \geq N$, or we find two distinct atoms $b,c \in X_0$ such that $d_n = b$ for all $n \geq N$ and $d_n = c$ for all $n \leq -N$.

In the first case, we get that $\sum_{n \in \Z}(1-\mu_n(\{b\})) < + \infty$ and we define the atom $a \in X$ by $a_n = b$ for all $n \in \Z$. Clearly, $g \cdot a = a$ for all $g \in \Z$. We define for every $k \in \N$, the Borel set $\cW_k = \{x \in X \mid x_n = b \;\;\text{whenever $|n| \geq k$}\;\}$. We have $\bigcup_{k \in \N} \cW_k = \cU$ so that this set has a complement of measure zero. Also, $g \cdot (\cW_k \setminus \{a\}) \cap \cW_k = \emptyset$ whenever $g \in \Z$ and $|g| > 2k$. So, for every $k \in \N$, the set $\cW_k \setminus \{a\}$ belongs to the dissipative part of the essentially free action $\Z \actson X$. Taking the union over $k$, we conclude that $\Z \actson X \setminus \{a\}$ is essentially free and dissipative.

In the second case, we define for every $k \in \N$, the Borel set
$$\cV_k = \{x \in X \mid x_n = c \;\;\text{if $n \leq -k$, and}\;\; x_n = b \;\;\text{if $n \geq k$}\;\} \; .$$
Again $\bigcup_{k \in \N} \cV_k = \cU$ has a complement of measure zero and $g \cdot \cV_k \cap \cV_k = \emptyset$ whenever $g \in \Z$ and $|g| > 2k+1$. So, $\Z \actson X$ is essentially free and dissipative.

For the rest of the proof, we may thus assume that $(X,\mu)$ is nonatomic. Then the Bernoulli shift $\Z\actson (X,\mu)$ is essentially free, by \cite[Lemma 2.2]{BKV19}. If $\Z \actson (X,\mu)$ is dissipative, the conclusion of point~2 holds. It now remains to consider the case where the conservative part $C \subset X$ of $\Z \actson (X,\mu)$ has positive measure. We have to prove the structural result in point~3 of the theorem.

Note that $C$ is $\mathbb{Z}$-invariant. By Lemma \ref{lem:S is in Z}, $C$ is also $\mathcal{S}$-invariant. We claim that for any integer $p\geq 2$ we have that
\begin{equation}\label{eq: Z is pZ}
L^{\infty}(C)^{p\Z}=L^{\infty}(C)^{\Z}.
\end{equation}
To prove this claim, fix $p\geq 2$ and define $Y=\Z/p\Z$, equipped with the normalized counting measure $\nu$. We let $\mathbb{Z}$ act on $Y$ by translation. From Lemma \ref{lem:S is in Z} we know that $L^{\infty}(C\times Y)^{\mathbb{Z}}=L^{\infty}(C)^{\mathbb{Z}}\ovt 1$, and this is exactly the statement \eqref{eq: Z is pZ}.

For any integer $p\geq 2$ and $i\in \{0,1\dots, p-1\}$, we write $(Z_{p,i},\nu_{p,i})=\prod_{n \in i+p\mathbb{Z}}(X_0,\mu_n)$. We identify
\begin{align}\label{eq:product identification}
(X,\mu)=\prod_{i=0}^{p-1}(Z_{p,i},\nu_{p,i})
\end{align}
and we obtain measure preserving factor maps $\pi_{p,i}\colon X\rightarrow Z_{p,i}$. For each $i\in \{0,1,\dots,p-1\}$, we have a Bernoulli action $p\mathbb{Z}\curvearrowright Z_{p,i}$ and the factor maps $\pi_{p,i}$ are $p\mathbb{Z}$-equivariant. Let $\mathcal{S}_{p,i}$ denote the group of finite permutations of $i+p\Z$. We also have a nonsingular action $\mathcal{S}_{p,i}\actson Z_{p,i}$ and $\pi_{p,i}$ is $\mathcal{S}_{p,i}$-equivariant as well.

For $i\in \{0,1,\dots,p-1\}$, write $\alpha_i\in \Aut(X,\mu)$ for the shift by $i$. There is a natural nonsingular isomorphism $\theta_{p,i}\colon Z_{p,0}\rightarrow Z_{p,i}$ such that $\theta_{p,i}\circ \pi_{p,0}= \pi_{p,i}\circ \alpha_i$.

We start by using \eqref{eq: Z is pZ} for $p=2$ to show that $L^{\infty}(C)^{\mathbb{Z}}$ is discrete as a von Neumann algebra. To simplify the notation, we will drop the index $p$ for this special case $p=2$.

Let $E_0\subset Z_0$ be a Borel set such that $(\pi_0)_*\mu|_C\sim\nu_0|_{E_0}$. Then $E_0$ is uniquely determined up to a null set. As $C$ is $\Z$-invariant and $\pi_0$ is a $2\Z$-equivariant factor map, $E_0$ is $2\mathbb{Z}$-invariant. Similarly $E_0$ is $\mathcal{S}_0$-invariant. Since $2\mathbb{Z}\subset \Z$ has finite index, the action $2\Z\curvearrowright C$ is conservative. Therefore also $2\mathbb{Z}\actson E_0$ is conservative and it follows that $E_0$ is contained in the conservative part of the Bernoulli action $2\Z\actson Z_0$. By Lemma \ref{lem:S is in Z} we have that
\begin{align*}
L^{\infty}(E_0)^{2\mathbb{Z}}=L^{\infty}(E_0)^{\mathcal{S}_0}.
\end{align*}
Put $E_1=\theta_1(E_0)\subset Z_1$. As $\theta_1\circ \pi_0 =\pi_1\circ \alpha_1$, we have that $(\pi_1)_*\mu|_C\sim \nu_1|_{E_1}$. By the equivariance of $\theta_1$ we also have that $E_1$ is $2\Z$- and $\mathcal{S}_1$-invariant, and that
\begin{align*}
L^{\infty}(E_1)^{2\mathbb{Z}}=L^{\infty}(E_1)^{\mathcal{S}_1}.
\end{align*}
Let $F\in L^{\infty}(E_0)^{2\Z}$ be arbitrary. Then $F\circ \pi_0\in L^{\infty}(C)^{2\mathbb{Z}}$ and we apply \eqref{eq: Z is pZ} to conclude that $F\circ \pi_0$ is $\mathbb{Z}$-invariant. Using that $\theta_1\circ\pi_0=\pi_1\circ \alpha_1$ and viewing $X=Z_0\times Z_1$, we can also express this as
\begin{equation}\label{eq:forces discreteness}
F\otimes 1= 1\otimes (F\circ \theta_1^{-1}) \;\;\text{a.e. on $C$}.
\end{equation}
The equality \eqref{eq:forces discreteness} holds for any $F\in L^{\infty}(E_0)^{2\mathbb{Z}}$, forcing $L^{\infty}(E_0)^{2\mathbb{Z}}$ to be discrete. Similarly, we see that $L^{\infty}(E_1)^{2\Z}$ is discrete as well.

Using once more the identification $X=Z_0\times Z_1$, we have that $C\subset E_0\times E_1$. Therefore, by Lemma \ref{lem:S is in Z}, we have that
\begin{align*}
&L^{\infty}(C)^{\mathbb{Z}}=L^{\infty}(C)^{\mathcal{S}}\subset L^{\infty}(C)^{\mathcal{S}_0\times \mathcal{S}_1}\\
&=1_C\bigl(L^{\infty}(E_0)^{\mathcal{S}_0}\ovt L^{\infty}(E_1)^{\mathcal{S}_1} \bigr)=1_C\bigl(L^{\infty}(E_0)^{2\Z}\ovt L^{\infty}(E_1)^{2\Z} \bigr) \; ,
\end{align*}
so that also $L^{\infty}(C)^{\mathbb{Z}}$ is discrete.

Take a $\Z$-invariant Borel set $\mathcal{U}\subset X$ with $\mu(\cU)>0$ such that $1_\cU$ is a minimal projection in $L^{\infty}(C)^{\Z}$. So, $\Z\actson \cU$ is ergodic. We prove that $\mathcal{U}$ is of the form $\mathcal{U}=C_0^{\Z}$ for some $C_0\subset X_0$.

For any integer $p\geq 2$ and $i\in\{0,1,\dots, p-1\}$ define the Borel set $\mathcal{U}_{p,i}$ by $(\pi_{p,i})_*\mu|_{\mathcal{U}}\sim (\nu_{p,i})|_{\mathcal{U}_{p,i}}$. First of all, note that $\mathcal{U}_{p,i}$ is $p\Z$-invariant and $\mathcal{S}_{p,i}$-invariant. By \eqref{eq: Z is pZ} the action $p\Z\curvearrowright \mathcal{U}$ is ergodic, so that also $p\mathbb{Z}\actson \mathcal{U}_{p,i}$ is ergodic. Since we can view $p\mathbb{Z}\curvearrowright Z_{p,i}$ as a Bernoulli action, it follows from Lemma \ref{lem:S is in Z} that $\mathcal{S}_{p,i}\actson \mathcal{U}_{p,i}$ is ergodic as well.

Using the identification \eqref{eq:product identification}, we have that $\mathcal{U}\subset \mathcal{U}_{p,0}\times \mathcal{U}_{p,1}\times \dots\times \mathcal{U}_{p,p-1}$ for any $p\geq 2$. As $\mathcal{U}$ is invariant under the subgroup $\mathcal{S}_{p,0}\times \mathcal{S}_{p,1}\times \dots \mathcal{S}_{p,p-1}$ and as $\mathcal{S}_{p,i}\actson \mathcal{U}_{p,i}$ acts ergodically for each $i\in \{0,1,\dots,p-1\}$, we have that
\begin{align}\label{eq:p product decomposition}
\mathcal{U}=\mathcal{U}_{p,0}\times \mathcal{U}_{p,1}\times \dots \mathcal{U}_{p,p-1} \mod \mu,\;\; \text{for every $p\geq 2$}.
\end{align}
Let $n\in \mathbb{N}$ and let $A_n\subset L^{\infty}(X)$ denote the subalgebra of elements only depending on the variables $x_j$, for $-n\leq j\leq n$, and let $E_n\colon L^{\infty}(X)\rightarrow A_n$ be the unique conditional expectation preserving the measure $\mu$. For any $n\in \mathbb{N}$ we apply the decomposition \eqref{eq:p product decomposition} to $p=2n+1$. Since the numbers $\{j \mid -n\leq j\leq n\}$ are distinct representatives of the elements of $\Z/(2n+1)\Z$, there exist $a_{n,j}\in L^{\infty}(X_0,\mu_j)$ such that
\begin{align*}
E_n(1_{\mathcal{U}})=a_{n,-n}\otimes a_{n,-n+1}\otimes \dots \otimes a_{n,n} \;\;\text{and}\;\; 0\leq a_{n,j}\leq 1 \;\;\text{a.e. for every $-n\leq j\leq n$.}
\end{align*}
Expressing that $E_n\circ E_m=E_n$ for $m\geq n$ yields
\begin{equation}\label{eq:conditional decomposition}
E_n(1_{\mathcal{U}})=a_{n,-n}\otimes a_{n,-n+1}\otimes \dots\otimes a_{n,n}=a_{m,-n}\otimes a_{m,-n+1}\otimes \dots\otimes a_{m,n}\cdot \prod_{n<|j|\leq m}\mu_{j}(a_{m,j}) \; .
\end{equation}
For each $j\in \mathbb{Z}$, letting $m\rightarrow +\infty$, we have that $a_{m,j}$ is a sequence in $L^{\infty}(X_0,\mu_j)$ such that $0\leq a_{m,j}\leq 1$ for all $m$. Similarly, for a fixed $n\in \mathbb{N}$, we have that
\begin{align*}
\prod_{n<|j|\leq m}\mu_{j}(a_{m,j})\in [0,1] \;\;\text{for every $m\geq n$}.
\end{align*}
Thus we can choose a subsequence $m_k\rightarrow +\infty$ such that $a_{m_k,j}\rightarrow b_j$
weakly for every $j\in \mathbb{Z}$ and $b_j\in L^{\infty}(X_0,\mu_j)$ satisfying $0\leq b_j\leq 1$, and such that $\prod_{n<|j|\leq m_k}\mu_{j}(a_{m_k,j})\rightarrow \lambda_n$ for every $n\in \mathbb{N}$ for some $\lambda_n\in [0,1]$. The equality \eqref{eq:conditional decomposition} implies that
\begin{align*}
E_n(1_{\mathcal{U}})=\lambda_n b_{-n}\otimes b_{-n+1}\otimes \dots \otimes b_{n}\;\;\text{for every $n\in \mathbb{N}$}.
\end{align*}
As $E_n(1_{\mathcal{U}})$ is nonzero for every $n$, we see that $\lambda_n$ and $b_j$ are nonzero for every $n\in\mathbb{N}$ and $j\in \mathbb{Z}$. Expressing once more that $E_n\circ E_m = E_n$ for $m\geq n$, we obtain that
\begin{align*}
\lambda_n=\lambda_m \prod_{n<|j|\leq m}\mu_{j}(b_j)\leq \prod_{n<|j|\leq m}\mu_{j}(b_j) \; .
\end{align*}
which shows that the infinite product $\prod_{|j|>n}\mu_{j}(b_j)$ converges to a nonzero limit for each $n\in \mathbb{N}$. Let $\lambda\in [0,1]$ be any limit point of the sequence $\lambda_n$. Using that $E_n(1_{\mathcal{U}})\rightarrow 1_{\mathcal{U}}$ strongly as $n\rightarrow +\infty$, we see that the infinite product of the $b_j$ converges and that we have an equality
\begin{align*}
1_{\mathcal{U}}=\lambda\bigotimes_{j\in \Z}b_j \; .
\end{align*}
Together with the fact that $\mathcal{U}$ is $\Z$-invariant, this implies that there is a Borel set $C_0\subset X_0$ such that $\mathcal{U}=C_0^{\mathbb{Z}}$. Write $C_1=X_0\setminus C_0$. As $\mu(\mathcal{U})>0$ we have that
\begin{align*}
\sum_{n\in \Z}\mu_{n}(C_1)<+\infty \; .
\end{align*}
By construction, the action $\Z\curvearrowright X\setminus C_0^{\Z}$ is dissipative. It follows that $C=C_0^{\mathbb{Z}}=\mathcal{U}$. We have chosen $\mathcal{U}$ such that $\Z\actson \mathcal{U}$ is ergodic, thus it follows from Lemma \ref{lem:S is in Z} that $\cS\actson C_0^{\Z}$ is ergodic, that $\Z\actson C_0^{\Z}$ is weakly mixing, and that for any ergodic pmp action $\Z\actson (Y,\nu)$ the nonsingular actions $\Z\actson C_0^{\Z}$, $\cS\actson C_0^{\Z}$ and $\Z\actson C_0^{\Z}\times Y$ have the same associated flow. It remains to prove that this flow is infinitely divisible. For this remaining part of the proof, we may replace $X_0$ by $C_0$ and thus assume that $C_0 = X_0$.

To prove this, let $p\geq 1$ be an integer. We use the notation introduced in \eqref{eq:product identification}. Let $\mathcal{S}_{p,i}\subset \mathcal{S}$ denote the subgroup of finite permutations of $i+p\Z$. We have that
\begin{align}\label{eq:inclusion chain 1}
\begin{aligned}
L^{\infty}(X\times \R)^{\Z}=L^{\infty}(X\times \R)^{\mathcal{S}}\subset L^{\infty}(Z_{p,0}\times \dots \times Z_{p,p-1}\times \mathbb{R})^{\mathcal{S}_{p,0}\times\dots \times\mathcal{S}_{p,p-1}}.
\end{aligned}
\end{align}
For each $i\in \{0,1,\dots ,p-1\}$, we write $\Gamma_{p,i}$ for the group $\Gamma_{p,i}=p\Z$, acting naturally on $Z_{p,i}$. We can view the action $\Gamma_{p,i}\curvearrowright Z_{p,i}$ as a nonsingular Bernoulli action, which is a factor of the conservative nonsingular Bernoulli action $p\Z\curvearrowright (X,\mu)$. Therefore $\Gamma_{p,i}\curvearrowright Z_{p,i}$ is conservative and by Lemma \ref{lem:Z is in S}, we have that $L^{\infty}(Z_{p,i}\times \mathbb{R})^{\mathcal{S}_{p,i}}\subset L^{\infty}(Z_{p,i}\times \mathbb{R})^{\Gamma_{p,i}}$, for each $i\in \{0,1,\dots ,p-1\}$, so that
\begin{align}\label{eq:inclusion chain 2}
\begin{aligned}
L^{\infty}(Z_{p,0}\times  \dots \times Z_{p,p-1}\times \mathbb{R})^{\mathcal{S}_{p,0}\times \dots \times \mathcal{S}_{p,p-1}}\subset L^{\infty}(Z_{p,0}\times \dots \times Z_{p,p-1}\times \mathbb{R})^{\Gamma_{p,0}\times \dots \times\Gamma_{p,p-1}}.
\end{aligned}
\end{align}
Each $\Gamma_{p,i}$ is a copy of $p\Z$ and the diagonal copy of $p\Z$ inside $\Gamma_{p,0}\times \dots\times \Gamma_{p,p-1}$ acts on $X$ by the Bernoulli action $p\Z\actson X$. Continuing the chain of inclusions \eqref{eq:inclusion chain 2}, we obtain
\begin{align}\label{eq:inclusion chain 3}
L^{\infty}(Z_{p,0}\times \dots \times Z_{p,p-1}\times \mathbb{R})^{\Gamma_{p,0}\times \dots \times\Gamma_{p,p-1}}\subset L^{\infty}(X\times \mathbb{R})^{p\Z}=L^{\infty}(X\times \R)^{\mathbb{Z}},
\end{align}
where the last equality follows from point 2 of Lemma \ref{lem:S is in Z}, applied to the ergodic pmp action $\Z\actson Y=\Z/p\Z$. Combining \eqref{eq:inclusion chain 1}, \eqref{eq:inclusion chain 2} and \eqref{eq:inclusion chain 3}, we see that all inclusions must in fact be equalities.

Put $(D_p,\eta_p)=\prod_{n\in p\Z}(X_0,\mu_n)$. For each $i\in \{0,1,\dots,p-1\}$, the action $\Gamma_{p,i}\curvearrowright Z_{p,i}$ is conjugate with $p\mathbb{Z}\curvearrowright D_p$. From the equality
\begin{align*}
L^{\infty}(X\times \mathbb{R})^{\Z}=L^{\infty}(Z_{p,0}\times \dots \times Z_{p,p-1}\times \mathbb{R})^{\Gamma_{p,0}\times \dots \times \Gamma_{p,p-1}}
\end{align*}
it then follows that the associated flow of $\Z \actson X$ is the joint flow of $p$ copies of the associated flow of $p\Z \actson D_p$. This concludes the proof of the theorem.
\end{proof}

We end this section by proving Theorem \ref{thm:type classification Z}. We first need the following lemma.

\begin{lemma}\label{lem:type permutation action}
Let $X_0$ be a standard Borel space equipped with a sequence of equivalent probability measures $\mu_n$. Let $\mathcal{S}$ denote the group of finite permutations of $\mathbb{N}$ and let $\mathcal{S}_1\subset \mathcal{S}$ be the subgroup fixing $1\in \mathbb{N}$. Consider the nonsingular group actions $\mathcal{S}\actson (X,\mu)=\prod_{n=1}^{\infty}(X_0,\mu_n)$ and $\mathcal{S}_1\actson(Z,\eta)= \prod_{n=2}^{\infty}(X_0,\mu_n)$. Assume that the action $\mathcal{S}_1\curvearrowright (Z,\eta)$ is ergodic.

Then $\mathcal{S}\curvearrowright (X,\mu)$ is ergodic and the following holds.
\begin{enumlist}
\item $\mathcal{S}\curvearrowright (X,\mu)$ is of type II$_{1}$ if and only if there exists a probability measure $\nu\sim \mu_1$ on $X_0$ such that $\nu^{\mathbb{N}}\sim\mu$.
\item $\mathcal{S}\curvearrowright (X,\mu)$ is of type II$_{\infty}$ if and only if there exists a $\sigma$-finite measure $\nu\sim \mu_1$ on $X_0$ and Borel sets $\mathcal{U}_n\subset X_0$ such that $\nu(\mathcal{U}_n)<+\infty$ for all $n\in \mathbb{N}$ and such that
\begin{align*}
\sum_{n=1}^{\infty}\mu_n(X_0\setminus \mathcal{U}_n)<+\infty,\;\; \sum_{n=1}^{\infty}H^{2}\bigl(\mu_n, \nu(\mathcal{U}_n)^{-1}\nu|_{\mathcal{U}_n}\bigr)<+\infty,\;\;\sum_{n=1}^{\infty}\nu(X_0\setminus \mathcal{U}_n)=+\infty.
\end{align*}
\end{enumlist}
\end{lemma}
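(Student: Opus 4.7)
\emph{Ergodicity.} Let $A\subset X$ be $\mathcal{S}$-invariant and view $X=X_0\times Z$ by splitting off the first coordinate. Since $A$ is $\mathcal{S}_1$-invariant (with $\mathcal{S}_1$ acting trivially on $X_0$) and $\mathcal{S}_1\actson (Z,\eta)$ is ergodic by hypothesis, a Fubini argument forces $1_A(x)=1_{B_1}(x_1)$ for some $B_1\subset X_0$. Applying the transposition $(1,n)\in \mathcal{S}$ gives the analogous statement $1_A(x)=1_{B_n}(x_n)$ for each $n\geq 2$, and independence of the coordinates then forces $\mu_n(B_n)\in\{0,1\}$, establishing ergodicity.

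\emph{Type II$_1$.} The ``if'' direction is immediate: $\nu^{\mathbb{N}}$ is exchangeable, hence $\mathcal{S}$-invariant, and is by hypothesis equivalent to $\mu$. For the converse, an $\mathcal{S}$-invariant probability $\mu'\sim \mu$ is exchangeable and ergodic under $\mathcal{S}$ (ergodicity depends only on the measure class), so the ergodic version of de Finetti/Hewitt--Savage yields $\mu'=\nu^{\mathbb{N}}$ for a probability measure $\nu$ on $X_0$. Projecting $\mu'\sim\mu$ onto the first coordinate yields $\nu\sim\mu_1$.

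\emph{Type II$_\infty$, ``if'' direction.} Set $\eta_n=\nu(\mathcal{U}_n)^{-1}\nu|_{\mathcal{U}_n}$ and $h_n=d\nu/d\mu_n$. By the first sum condition and Borel--Cantelli, $\mu$-a.e.\ $x$ lies in $W=\{x:x_n\in \mathcal{U}_n\text{ for all but finitely many }n\}$. The second sum condition, combined with Kakutani's criterion, ensures that on each $W_N=\{x:x_n\in \mathcal{U}_n\ \forall n\geq N\}$ the infinite product $\prod_{n\geq N}(d\eta_n/d\mu_n)(x_n)$ converges $\mu$-a.e.\ to a positive finite value. These local definitions glue consistently to a positive function $F$ on $W$ given by $F(x)=\prod_{n<N}(h_n(x_n)/\nu(\mathcal{U}_n))\cdot \prod_{n\geq N}(d\eta_n/d\mu_n)(x_n)$ for any $N$ with $x\in W_N$. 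The identity $h_n\mu_n=\nu=h_m\mu_m$ makes $\mu'=F\cdot\mu$ $\mathcal{S}$-invariant, $\sigma$-finiteness is clear since $F$ is a.e.\ finite, and the third sum condition forces $\mu'(X)=+\infty$, so $\mathcal{S}\actson (X,\mu)$ is of type II$_\infty$.

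\emph{Type II$_\infty$, ``only if'' direction and main obstacle.} Given an $\mathcal{S}$-invariant, $\sigma$-finite, infinite measure $\mu'\sim\mu$, taking the marginal of $F\cdot\mu$ (with $F=d\mu'/d\mu$) in each coordinate yields a $\sigma$-finite measure $\nu$ on $X_0$ with $\nu\sim\mu_1$ and $d\nu/d\mu_n=:h_n$ independent of $n$; $\nu$ must be infinite because $\mu'(X)=+\infty$. The cocycle relation $F(\sigma x)\cdot (d(\mu\circ\sigma)/d\mu)(x)=F(x)$ combined with ergodicity of $\mathcal{S}\actson(X,\mu)$ yields uniqueness of $F$ up to a positive scalar, and a direct computation shows that the product $F_\nu(x)=\prod_n h_n(x_n)$, whenever it converges, is itself a solution of the cocycle relation coming from the Bernoulli structure; uniqueness then forces $F$ to have this product form. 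Choosing $\mathcal{U}_n=\{c_n\leq h_n\leq C_n\}$ for suitable cutoffs $c_n,C_n$ gives $\nu(\mathcal{U}_n)<+\infty$, and the three sum conditions follow from the $\mu$-a.e.\ convergence of $F=\prod h_n$ (via Kakutani and Borel--Cantelli) together with $\nu(X_0)=+\infty$. The delicate step is this passage from the cocycle equation to the product form of $F$: de Finetti cannot be invoked directly since $\mu'$ is infinite, so one must combine ergodicity-based uniqueness with the explicit product cocycle from the Bernoulli structure, and the three sum conditions have to be balanced so that $\prod h_n$ converges $\mu$-a.e.\ to a positive value while simultaneously producing an infinite measure.
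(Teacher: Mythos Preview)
Your ergodicity argument and the type II$_1$ case are fine; invoking the ergodic de~Finetti theorem for the latter is a clean shortcut that the paper does not take (it treats II$_1$ and II$_\infty$ uniformly). The ``if'' direction of type II$_\infty$ is also essentially correct.

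The genuine gap is exactly where you flag it: the ``only if'' direction of type II$_\infty$. Two concrete problems remain unfixed. First, ``taking the marginal of $F\cdot\mu$ in each coordinate'' does not produce a $\sigma$-finite measure $\nu$ on $X_0$ when $\mu'=F\mu$ is infinite; the coordinate pushforward of an infinite measure can be identically $+\infty$, so $\nu$ is not constructed. Second, the uniqueness-plus-product-cocycle step is circular: to invoke uniqueness you need a second Borel solution of the cocycle equation defined $\mu$-a.e., but $\prod_n h_n(x_n)$ is only a candidate \emph{once you know it converges $\mu$-a.e.}, which is precisely the conclusion you want. Neither issue is resolved by the sketch in your last paragraph.

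The paper's route bypasses both difficulties. Working additively with a Borel $F\colon X\to\R$ satisfying $\alpha(x,\sigma x)=F(x)-F(\sigma x)$, one observes that for the doubled space $X_0\times X_0\times Z$ the map $H(x,x',z)=F(x,z)-F(x',z)$ is $\mathcal{S}_1$-invariant in $z$ (because the cocycle $\alpha$ for a permutation in $\mathcal{S}_1$ does not see the first coordinate). Ergodicity of $\mathcal{S}_1\actson Z$ then forces $H(x,x',z)=\beta(x)-\beta(x')$ for a single Borel $\beta\colon X_0\to\R$, and one \emph{defines} $d\nu/d\mu_1=\exp(-\beta)$. No marginal is taken and no product convergence is assumed. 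Transporting this through the transpositions $(1,n)$ gives the coordinate formula $F(x)-F(y)=(\alpha_n+\beta)(x_n)-(\alpha_n+\beta)(y_n)$, which identifies the associated tail boundary flow of the sequence $((\alpha_n+\beta)_*\mu_n)_n$ with the translation $\R\actson\R$. The three summability conditions then come out of a Kolmogorov three-series / truncation argument (as in \cite[Proposition~2.1]{BV20}), with the sets $\mathcal{U}_n$ arising as sublevel sets of $|\alpha_n+\beta-t_n|$ for appropriate centers $t_n$.
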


Note that Lemma \ref{lem:type permutation action} strongly resembles \cite[Theorem 3.3]{BV20}. There is however an important difference: in \cite[Theorem 3.3]{BV20}, it is part of the hypotheses that the Radon-Nikodym derivatives $d\mu_n / d\mu_0$ satisfy a certain boundedness condition. We do not make such an assumption, because we will use Lemma \ref{lem:type permutation action} in the context of totally arbitrary Bernoulli shifts. As a compensation, we make an ergodicity assumption on the permutation action. Thanks to Theorem \ref{thm.main-structure-variant}, this ergodicity assumption will hold automatically when the Bernoulli shift $\Z \actson X$ is conservative.

When $X_0$ is a finite set and $(\mu_n)_{n\geq 1}$ are equivalent probability measures on $X_0$, there is a necessary and sufficient ergodicity criterion for the nonsingular permutation action $\cS\actson (X,\mu)=\prod_{n=1}^{\infty}(X_0,\mu_n)$ in terms of the measures $\mu_n$, see \cite[Theorem 1.6]{AP77}. However, when $X_0$ is infinite, only sufficient conditions are known, see \cite[Theorems 1.8 \& 1.12]{AP77}.

The measure $\nu$ appearing in statement 2 of Lemma \ref{lem:type permutation action} is either finite, or infinite. Of course, if $\nu$ is finite, the condition $\nu(\mathcal{U}_n)<+\infty$ is automatically fulfilled. Similarly, when $\nu$ is infinite, the conditions $\nu(\mathcal{U}_n)<+\infty$ trivially imply that $\sum_{n=1}^{\infty}\nu(X_0\setminus\mathcal{U}_n)=+\infty$.

\begin{proof}
Suppose that $F\in L^{\infty}(X)$ is $\mathcal{S}$-invariant. As $\mathcal{S}_1$ acts ergodically on $(Z,\eta)$, we see that $F$ essentially only depends on the coordinate $x_1$. But as $F$ is $\mathcal{S}$-invariant, it follows that $F$ essentially only depends on the coordinate $x_2$, thus $F$ must be essentially constant. So the action $\mathcal{S}\curvearrowright (X,\mu)$ is ergodic.

If $x,y\in X$ are elements that differ in only finitely many coordinates, we write
\begin{align*}
\alpha(x,y)=\sum_{n\in \mathbb{N}}\big(\alpha_n(x_n)-\alpha_n(y_n)\big) \;\; ,\;\; \text{where}\;\; \alpha_n=\log\frac{d\mu_n}{d\mu_1} \; .
\end{align*}
Assume that the action $\mathcal{S}\curvearrowright (X,\mu)$ is semifinite. Then there exists a Borel map $F\colon X\rightarrow \mathbb{R}$ such that
\begin{align}\label{eq:conjugacy F}
\alpha(x,\sigma(x))=F(x)-F(\sigma(x)) \;\;\text{for every $\sigma\in \mathcal{S}$ and a.e.\ $x\in X$.}
\end{align}
Define $(\widetilde{X},\widetilde{\mu})=(X_0\times X_0\times Z,\mu_1\times \mu_1\times \eta)$, by doubling the first coordinate and consider the map
\begin{align*}
H\colon \widetilde{X}\rightarrow \mathbb{R}:\;\; H(x,x',z)=F(x,z)-F(x',z) \; .
\end{align*}
For each $\sigma\in \mathcal{S}_1$, we have that $H(x,x',z)=H(x,x',\sigma(z))$ for a.e. $(x,x',z)\in \widetilde{X}$. As the action $\mathcal{S}_1\curvearrowright (Z,\eta)$ is ergodic, $H$ is essentially independent of the $z$-variable. Therefore there exists a Borel map $L\colon X_0\times X_0 \rightarrow \R$ such that $H(x,x',z)=L(x,x')$ for a.e. $(x,x',z)\in \widetilde{X}$. Let $z\in Z$ be an element that witnesses this equality a.e. and put $\beta(x)=F(x,z)$. So we have found a Borel map $\beta\colon X_0\rightarrow \mathbb{R}$ such that
\begin{align*}
F(x)-F(y)=\beta(x_1)-\beta(y_1) \; ,
\end{align*}
when $x$ and $y$ are unequal only in the first coordinate. For $n\geq 2$, using \eqref{eq:conjugacy F} and the element $\sigma_n\in \mathcal{S}$ flipping the elements $1$ and $n$, we see that
\begin{align}\label{eq: difference coordinate n}
F(x)-F(y)=\alpha_n(x_n)+\beta(x_n)-\alpha_n(y_n)-\beta(y_n) \; ,
\end{align}
whenever $x,y\in X$ are unequal only in the $n$'th coordinate. When $x,y\in X$ are elements that differ in only finitely many coordinates, write
\begin{align*}
\Omega(x,y)=\sum_{n\in \mathbb{N}} \bigl(\alpha_n(x_n)+\beta(x_n)-\alpha_n(y_n)-\beta(y_n)\bigr)\; .
\end{align*}
Let $\mathcal{R}_{\Omega}$ be the equivalence relation on $X\times \mathbb{R}$ that is given by $(x,t)\sim (y,s)$ if and only if $x$ and $y$ differ only in finitely many coordinates and $s-t=\Omega(x,y)$. Then the flow $\mathbb{R}\curvearrowright L^{\infty}(X\times \mathbb{R})^{\mathcal{R}_{\Omega}}$ is isomorphic with the tail boundary flow associated to the sequence of probability measures $(\alpha_n+\beta)_*\mu_n$. By \eqref{eq: difference coordinate n} we have that $\Omega(x,y)=F(x)-F(y)$ for $x,y\in X$ that differ only in finitely many coordinates. We conclude that the tail boundary flow associated to $(\alpha_n+\beta)_*\mu_n$ is isomorphic with the translation action $\mathbb{R}\actson \mathbb{R}$.

We again use the cutoff function $T_\kappa : \R \to \R$ for $\kappa > 0$, as defined in \eqref{eq.cutoff}. By \cite[Proposition 2.1]{BV20} there exists a sequence $t_n\in \R$ such that
\begin{equation}\label{eq: van Kampen summability}
\sum_{n=1}^{\infty}\int_{X_0}T_{\kappa}(\alpha_n(x)+\beta(x)-t_n)^2d\mu_n(x)<+\infty,
\end{equation}
for every $\kappa>0$. Define the $\sigma$-finite measure $\nu \sim \mu_1$ by $d\nu/d\mu_1=\exp(-\beta)$. If $\nu$ is finite, then we can add a constant to $\beta$, so that $\nu$ becomes a probability measure. Then \eqref{eq: van Kampen summability} still holds with a potentially different sequence $t_n\in \R$. Thus we may assume that $\nu$ is either infinite, or a probability measure. Define the sets
\begin{align*}
\mathcal{U}_n=\{x\in X_0 \mid \; |\alpha_n(x)+\beta(x)-t_n|\leq 1\} \; .
\end{align*}
By \eqref{eq: van Kampen summability} we have that $\sum_{n=1}^{\infty}\mu_n(X_0\setminus \mathcal{U}_n)<+\infty$. There exists a $C>0$ be such that $|1-r|\leq C|\log(r)|$ for all $r\in [\exp(-1),\exp(1)]$, so that
\begin{align*}
|1-\exp(-(\alpha_n(x)+\beta(x)-t_n)/2)|\leq C|\alpha_n(x)+\beta(x)-t_n|, \;\;\text{for every}\;\; x\in \mathcal{U}_n \; .
\end{align*}
Taking $\kappa=1$, it follows from \eqref{eq: van Kampen summability} that
\begin{align*}
\sum_{n=1}^{\infty}\int_{\mathcal{U}_n}|1-\exp(-(\alpha_n(x)+\beta(x)-t_n)/2)|^2d\mu_n(x)<+\infty \; .
\end{align*}
The left hand side equals
\begin{align*}
\sum_{n=1}^{\infty}\int_{\mathcal{U}_n}\bigl(\sqrt{d\mu_n / d\mu_1}-\exp(t_n/2)\sqrt{d\nu / d\mu_1}\bigr)^{2} \, d\mu_1
\end{align*}
and since
\begin{align*}
\sum_{n=1}^{\infty}\int_{X_0}\bigl(\sqrt{d\mu_n / d\mu_1}-1_{\mathcal{U}_n}\, \sqrt{d\mu_n/d\mu_1}\bigr)^{2} \, d\mu_1=\sum_{n=1}^{\infty}\mu_n(X_0\setminus \mathcal{U}_n)<+\infty \; ,
\end{align*}
we conclude that also
\begin{equation}\label{eq: van Kampen summability 2}
\sum_{n=1}^{\infty}\int_{X_0}\bigl(\sqrt{d\mu_n / d\mu_1}-\exp(t_n/2) \, 1_{\mathcal{U}_n} \, \sqrt{d\nu / d\mu_1}\bigr)^{2} \, d\mu_1< +\infty \; .
\end{equation}
On $\mathcal{U}_n$ we have that $\exp(-\beta)\leq \exp(1+|t_n|)\exp(\alpha_n)$. As $\exp(\alpha_n)$ is $\mu_1$-integrable, it follows that $\exp(-\beta)$ is $\mu_1|_{\mathcal{U}_n}$-integrable, so $\nu(\mathcal{U}_n)<+\infty$. For each $n\in \mathbb{N}$ define the probability measure $\nu_n=\nu(\mathcal{U}_n)^{-1}\nu|_{\mathcal{U}_n}$.

For any pair of nonzero vectors $\xi_1,\xi_2$ in a Hilbert space, with $\|\xi_1\|=1$, we have that $\|\xi_1-\|\xi_2\|^{-1} \xi_2\|\leq 2\|\xi_1-\xi_2\|$. Thus it follows from \eqref{eq: van Kampen summability 2} that
\begin{align*}
\sum_{n=1}^{\infty}H^2(\mu_n,\nu_n)<+\infty \; .
\end{align*}
Define the map $\gamma_n$ by
\begin{align*}
\gamma_n\colon X_0\rightarrow \R:\;\; \gamma_n(x)=\log\frac{d\mu_n}{d\nu}(x)+\log(\nu(\mathcal{U}_n)) \; .
\end{align*}
Let $D>0$ be such that $T_{1}(r)\leq D|1-\exp(-r/2)|$, for every $r\in \R$. It follows that
\begin{align}\label{eq: gamma van Kampen}
\int_{\mathcal{U}_n}T_1(\gamma_n(x))^2 \, d\mu_n(x) \leq D^2 \, \int_{\mathcal{U}_n}|1-\exp(-\gamma_n/2)|^2d\mu_n\leq D^2 \, H^2(\mu_n,\nu_n)\; .
\end{align}
Define an increasing sequence of subsets $Z_k\subset X$ by
\begin{align*}
Z_k=\{x\in X \mid x_m\in \mathcal{U}_m \;\;\text{for every}\;\; m> k \} \; .
\end{align*}
Note that $\mu(X\setminus Z_k)\rightarrow 0$ as $\sum_{n=1}^{\infty}\mu_n(X\setminus \mathcal{U}_n)<+\infty$. By \eqref{eq: gamma van Kampen} the function
\begin{align}\label{eq:a.e. convergent sum}
G(x)=\sum_{n=1}^{\infty}\bigl(\log\frac{d\mu_n}{d\nu}(x_n)+\log(\nu(\mathcal{U}_n))\bigr)
\end{align}
converges unconditionally a.e. on $Z_k$, for every $k\in \mathbb{N}$. Therefore the sum converges unconditionally a.e. As in the proof of \cite[Theorem 3.3]{BV20} the $\sigma$-finite measure $\zeta\sim \mu$ defined by $d\zeta/d\mu=\exp(-G)$ is $\mathcal{S}$-invariant and $\zeta$ is a finite measure if and only if $\nu$ is finite and $\sum_{n=1}^{\infty}\nu(X_0\setminus \mathcal{U}_n)<+\infty$.

If $\mathcal{S}\curvearrowright (X,\mu)$ is of type II$_{1}$, then $\zeta$ must be finite and we have that $\mu\sim \nu^{\mathbb{N}}$. Conversely, if there exists a probability measure $\nu\sim \mu_1$ such that $\mu\sim \nu^{\mathbb{N}}$, it follows directly that $\mathcal{S}\curvearrowright (X,\mu)$ is of type II$_{1}$.

If $\mathcal{S}\curvearrowright (X,\mu)$ is of type II$_{\infty}$, then $\zeta$ must be infinite. So we have shown that all conditions in point 2 of the lemma are true. Conversely, if there exist a $\sigma$-finite measure $\nu$ and subsets $\mathcal{U}_n\subset X_0$ satisfying the conditions of point 2, then the sum \eqref{eq:a.e. convergent sum} converges a.e. and the $\sigma$-finite measure $\zeta\sim \mu$ defined by $d\zeta/d\mu=\exp(-G)$ is infinite and $\mathcal{S}$-invariant.
\end{proof}

\begin{proof}[Proof of Theorem \ref{thm:type classification Z}]
First of all, by Theorem \ref{thm.main-structure-variant} the Bernoulli shift $\Z\actson (X,\mu)$ is weakly mixing and $\mathcal{S}\curvearrowright (X,\mu)$ is ergodic. Moreover the Maharam extensions of $\Z\actson (X,\mu)$ and $\cS\actson (X,\mu)$ satisfy
\begin{align}\label{eq:conjugate semifinite flows}
L^{\infty}(X\times \mathbb{R})^{\mathbb{Z}}=L^{\infty}(X\times \mathbb{R})^{\mathcal{S}}.
\end{align}
Write $(Z,\eta)=\prod_{n\in \mathbb{Z}, n\neq 1}(X_0,\mu_n)$. Let $\mathcal{S}_1\subset\mathcal{S}$ be the subgroup of finite permutations that fix $1\in \mathbb{Z}$, so that $\mathcal{S}_1\actson (Z,\eta)$. Let $\theta$ denote the partial shift
\begin{align*}
\theta\colon (X,\mu)\rightarrow (Z,\eta):\;\; \theta(x)_n=\begin{cases}x_n &\;\;\text{if}\;\; n\leq 0\\
x_{n-1} &\;\;\text{if}\;\; n\geq 2 \end{cases}.
\end{align*}
It follows directly from the Kakutani criterion that $\theta$ is a nonsingular isomorphism. It intertwines the actions $\mathcal{S}\curvearrowright (X,\mu)$ and $\mathcal{S}_1\curvearrowright (Z,\eta)$. So $\mathcal{S}_1\actson (Z,\eta)$ is ergodic and Lemma \ref{lem:type permutation action} applies. If $\mathbb{Z}\actson (X,\mu)$ is semifinite, then by \eqref{eq:conjugate semifinite flows} also $\mathcal{S}\actson (X,\mu)$ is semifinite. To complete the proof of the theorem, it suffices to show that $\mathbb{Z}\actson (X,\mu)$ is of type II$_{\infty}$ if there exist a $\sigma$-finite measure $\nu$ and Borel sets $\mathcal{U}_n\subset X_0$ satisfying the conditions of the second point of the theorem.

Assume we are given such $\nu$ and $\mathcal{U}_n$. Then, as in \eqref{eq:a.e. convergent sum}, the sum
\begin{align*}
G(x)=\sum_{n\in \Z}\left(\log\frac{d\mu_n}{d\nu}(x_n)+\log(\nu(\mathcal{U}_n))\right)
\end{align*}
is unconditionally convergent a.e. By \eqref{eq:conjugate semifinite flows} the map $(x,t)\mapsto t-G(x)$ is invariant under the Maharam extension of $\Z$. Also, it is $\R$-equivariant.

It follows that the $\sigma$-finite measure $\zeta\sim \mu$ defined by $d\zeta/d\mu=\exp(-G)$ is an infinite $\Z$-invariant measure.
\end{proof}

\section{Nonsingular Poisson suspensions: proof of Proposition \ref{prop.poisson-suspension}}\label{sec:Poisson suspension}

We start this section by recalling the construction of the Poisson suspension. For a detailed treatment, we refer to \cite{Roy08,DKR20}. Let $(X_0,\mu_0)$ be a $\sigma$-finite standard measure space. We write $\mathcal{B}_0=\{A\subset X_0 \mid A \;\;\text{is Borel and}\;\; \mu_0(A)<+\infty\}$. To $(X_0,\mu_0)$, one associates a standard probability space $(X,\mu)$ and random variables $P_A\colon X\rightarrow \{0,1,2,\dots\}$ for every $A\in \mathcal{B}_0$ such that the following holds.
\begin{enumlist}
\item The random variable $P_A$ is Poisson distributed with intensity $\mu_0(A)$.
\item If $A,B\in \mathcal{B}_0$ are disjoint, then $P_A$ and $P_B$ are independent random variables and we have that $P_{A\cup B}=P_A+P_B$.
\item The family $(P_A)_{A\in \mathcal{B}_0}$ separates the points of $X$.
\end{enumlist}
These three properties uniquely characterize $(X,\mu)$ and the random variables $(P_A)_{A\in \mathcal{B}_0}$. The probability space $(X,\mu)$ is called the \emph{Poisson suspension} over the base space $(X_0,\mu_0)$.

By the functoriality of this construction, every measure preserving Borel automorphism $\theta : X_0 \to X_0$ gives rise to an essentially unique, measure preserving Borel automorphism $\thetah : X \to X$ such that for every $A \in \cB_0$, we have
\begin{equation}\label{eq.characterize-suspension-aut}
P_A(\thetah(x)) = P_{\theta^{-1}(A)}(x) \quad\text{for $\mu$-a.e.\ $x \in X$.}
\end{equation}
In \cite[Theorem 3.3]{DKR20}, it was discovered that a nonsingular Borel automorphism $\theta : X_0 \to X_0$ gives rise to a nonsingular Borel automorphism $\thetah : X \to X$ satisfying \eqref{eq.characterize-suspension-aut} if and only if
\begin{equation}\label{eq.L2-bound}
\sqrt{\frac{d(\theta_* \mu_0)}{d\mu_0}} - 1 \in L^2(X_0,\mu_0) \; .
\end{equation}
For completeness, we include below a short proof of one implication, namely that every $\theta$ satisfying \eqref{eq.L2-bound} has a suspension $\thetah$. This proof is essentially taken from \cite{DKR20}, but presented in a more direct way.

So, whenever $G \actson (X_0,\mu_0)$ is a nonsingular action of a locally compact second countable (lcsc) group such that
\begin{equation}\label{eq.new-bound-K}
\sup_{g \in K} \bigl\| \sqrt{d(g \mu_0)/d\mu_0} - 1 \bigr\|_2 < + \infty \quad\text{for every compact $K \subset G$,}
\end{equation}
we have an essentially unique nonsingular action $G \actson (X,\mu)$ characterized by $P_A(g \cdot x) = P_{g^{-1}\cdot A}(x)$, which is called the \emph{Poisson suspension action}.

The main goal of this section is to prove Proposition \ref{prop.poisson-suspension}. We actually prove the following \emph{stable} version, that also considers the associated flow of the diagonal action $G\actson X\times Y$ for any ergodic pmp action $G\actson (Y,\nu)$.

\begin{proposition}\label{prop:stable Poisson suspension}
Let $G$ be any lcsc group that does not have property $(T)$. Let $\R\actson (Z,\zeta)$ be any Poisson flow. Then $G$ admits a nonsingular action $G\actson (X_0,\mu_0)$ of which the Poisson suspension $G\actson (X,\mu)$ is well-defined, weakly mixing, essentially free and such that for any ergodic pmp action $G\actson (Y,\nu)$ the diagonal action $G\actson X\times Y$ has associated flow $\R\actson Z$.
\end{proposition}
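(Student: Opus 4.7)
The plan is to realize the target flow through the Maharam extension of a carefully constructed Poisson suspension. By Proposition \ref{prop.standard-poisson}, I first write $\R \actson (Z,\zeta)$ as the tail boundary flow of a sequence of standard Poisson distributions $(\si_{\lambda_n,a_n})_{n\in\N}$ with $\lambda_n>0$ and $a_n\in\R$. The base space will be built as a disjoint union $(X_0,\mu_0)=\bigsqcup_{n\in\N}(X_{0,n},\mu_{0,n})$ of $G$-invariant pieces, one for each index $n$, where $(X_{0,n},\mu_{0,n})$ is chosen so that the independent Poisson point process on $X_{0,n}$ contributes a compound Poisson term of type $\cE(\lambda_n\delta_{a_n})$ to the log-Radon-Nikodym derivative of the $G$-action on the suspension.

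For each $n$, the construction of $(X_{0,n},\mu_{0,n})$ uses the failure of property~(T) for $G$ in an essential way. By the Delorme--Guichardet theorem, $G$ admits an unbounded conditionally negative definite function and hence an orthogonal representation with almost invariant but no invariant vectors. This allows the construction, for any prescribed pair $(\lambda_n,a_n)$, of a nonsingular $G$-action on a $\sigma$-finite standard measure space of total mass $\lambda_n$ whose log-Radon-Nikodym cocycle $\log d(g_*\mu_{0,n})/d\mu_{0,n}$ is, as a random variable under $\mu_{0,n}/\lambda_n$, close to the constant $a_n$ in a Hellinger sense that is uniform on compact subsets of $G$. The approximation parameters must be summable in $n$, both to ensure the hypothesis \eqref{eq.L2-bound} needed for well-definedness of the Poisson suspension and so that the tail boundary flow identification is not destroyed by the errors.

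With $(X_0,\mu_0)$ in hand, the Poisson suspension $G\actson (X,\mu)$ is well defined, and because a Poisson point process on a disjoint union is a product of independent Poisson point processes, we have a $G$-equivariant factorization $(X,\mu)=\prod_n(X_n,\mu_n)$. Evaluated at a configuration $\xi$, the log-Radon-Nikodym cocycle decomposes as $\log(d(g_*\mu)/d\mu)(\xi)=\sum_n\sum_{x\in\xi\cap X_{0,n}}\log(d(g_*\mu_{0,n})/d\mu_{0,n})(x)$, which is an independent sum whose $n$-th term is approximately $\si_{\lambda_n,a_n}$-distributed by construction. A direct analysis of the Maharam extension $G\actson X\times\R$ and its ergodic decomposition then identifies the associated flow with the tail boundary flow of $(\si_{\lambda_n,a_n})_{n\in\N}$, which is isomorphic to $\R\actson(Z,\zeta)$.

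Weak mixing of $G\actson(X,\mu)$ follows from the Fock space decomposition of the Koopman representation of a Poisson suspension, provided the base representation on $L^2(X_0,\mu_0)$ admits no nonzero fixed vectors, which is arranged in the construction. Essential freeness is automatic once the base action is essentially free on each block. For the stable refinement, given any ergodic pmp action $G\actson(Y,\nu)$, the equality of associated flows of $G\actson X$ and $G\actson X\times Y$ follows by an argument in the spirit of Lemma \ref{lem:S is in Z}, using the ergodicity of $G\actson (Y,\nu)$ to conclude that $L^\infty(X\times\R\times Y)^G = L^\infty(X\times\R)^G\ovt 1$. The principal obstacle is the second step: exhibiting, for an arbitrary non-(T) lcsc group $G$, the building blocks $(X_{0,n},\mu_{0,n})$ with the required log-cocycle control summable in $n$. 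A secondary technical point is the precise identification of the ergodic decomposition of the Maharam extension with the target tail boundary flow, requiring a Kakutani-style convergence argument for the independent Poisson contributions.
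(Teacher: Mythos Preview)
Your overall architecture matches the paper's: reduce via Proposition~\ref{prop.standard-poisson} to Poisson distributions $\si_{\lambda_n,a_n}$, build $X_0$ as a disjoint union of $G$-invariant blocks indexed by $n$, and identify the associated flow with the tail boundary flow via the product structure of the suspension. However, the mechanism you propose for the blocks cannot work as stated. You ask for $(X_{0,n},\mu_{0,n})$ of total mass $\lambda_n$ with $\log d(g_*\mu_{0,n})/d\mu_{0,n}$ close to the constant $a_n$ under $\mu_{0,n}/\lambda_n$; but a nonsingular automorphism preserves total mass, so the Radon--Nikodym derivative has mean $1$ and, by Jensen, its logarithm has nonpositive mean --- it cannot concentrate near a nonzero constant. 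The paper instead takes each block to have \emph{infinite} measure: one fixes a weakly mixing infinite-measure-preserving action $G\actson(W,\rho)$ with a F{\o}lner sequence $A_n\subset W$ (this is where non-(T) enters, via Connes--Weiss rather than Delorme--Guichardet), and puts on $W\times\{n\}$ the measure that equals $\lambda_n\rho(A_n)^{-1}\rho$ on $A_n$ and $e^{a_n}$ times that on $W\setminus A_n$. The point is that there is then a $G$-\emph{invariant} probability measure $\nu_n\sim\mu_n$ on the suspension factor $X_n$, and it is the deterministic function $\psi_n=-\log d\mu_n/d\nu_n$, not the $G$-cocycle, whose law is $\si_{\lambda_n,a_n}$ up to translation: indeed $\psi_n$ is an affine function of $P_{A_n}$, which is Poisson$(\lambda_n)$.

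Two further issues. First, your Fock-space argument for weak mixing applies to the pmp actions $G\actson(X_n,\nu_n)$, not directly to the nonsingular action $G\actson(X,\mu)$; the infinite product $\prod_n\nu_n$ need not be equivalent to $\mu$. Second, the stable statement cannot be obtained ``in the spirit of Lemma~\ref{lem:S is in Z}'': that lemma is specific to Bernoulli shifts of $\Z$ and relies on the permutation action $\cS$, for which there is no analogue here. The paper instead fixes a countable $\Lambda\subset G$ along which the Koopman representation of $G\actson W$ is mixing, arranges (by passing to a subsequence of F{\o}lner sets) that the Maharam extension of $G\actson X\times Y$ is recurrent along $\Lambda\Lambda^{-1}$ (Proposition~\ref{prop.recurrence-along-Poisson}), and then invokes \cite[Theorem 7.14]{AIM19} to show that any $G$-invariant function, after conjugating by $\Psi_N(x,y,t)=(x,y,t+\sum_{n\leq N}\psi_n(x_n))$, is independent of the first $N$ coordinates of $X$. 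This single argument simultaneously yields ergodicity, weak mixing, the stable property, and the tail boundary identification.
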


Proposition \ref{prop:stable Poisson suspension} is sharp in the following sense: by \cite[Theorem G]{DKR20}, if $G$ has property~(T), then every Poisson suspension action admits an equivalent invariant probability measure. This follows by applying the Delorme-Guichardet theorem (see e.g.\ \cite[Theorem 2.12.4]{BHV08}) to the $1$-cocycle $g \mapsto \sqrt{d(g \mu_0)/d\mu_0} - 1$ with values in the Koopman representation of $G \actson (X_0,\mu_0)$.

Before proving Proposition \ref{prop:stable Poisson suspension}, we introduce some further background, based on \cite{DKR20}. In particular, we give a short proof that every nonsingular automorphism $\theta : X_0 \to X_0$ satisfying \eqref{eq.L2-bound} admits a Poisson suspension $\thetah$. This proof is essentially taken from \cite{DKR20}, but since our approach is direct and short, we include it here for convenience of the reader.

Write $\cH = L^2_\R(X_0,\mu_0)$ and denote by $\cH_n \subset \cH^{\ot n}$ the closed subspace of symmetric vectors (i.e.\ invariant under the action of the symmetric group $S_n$ on $H^{\ot n}$). The key point is that there is a canonical isometric isomorphism
\begin{equation}\label{eq.fock-iso}
U : \R \oplus \bigoplus_{n=1} \cH_n \to L^2_\R(X,\mu)
\end{equation}
between the symmetric Fock space $\cF_s(\cH)$ of $\cH$ and $L^2_\R(X,\mu)$. This isomorphism $U$ is defined as follows. For every $\xi \in \cH$, denote by $\exp(\xi) \in \cF_s(\cH)$ the usual exponential given by
$$\exp(\xi) = 1 \oplus \bigoplus_{n=1}^\infty \frac{1}{n!} \xi^{\ot n} \; .$$
For every $A\in \mathcal{B}_0$ and $\alpha\in \mathbb{R}$ define $T_{\alpha,A}\colon X\rightarrow \R$ by
\begin{align}\label{eq:T notation}
T_{\alpha,A}=\exp(-(\alpha-1)\mu_0(A)) \, \alpha^{P_A} \; .
\end{align}

If $\xi\in 1+L^2_\R(X_0,\mu_0)$ takes only finitely many function values, i.e.\ is of the form $\xi=1_A +\sum_{i=1}^m\alpha_i1_{A_i}$ for $\alpha_i\in \R$, disjoint $A_i\in \mathcal{B}_0$ and $A = \R \setminus \bigcup_{i=1}^m A_i$, we define $\cT(\xi)\in L^2_{\R}(X,\mu)$ by
\begin{align*}
\cT(\xi)=\prod_{i=1}^{m}T_{\alpha_i,A_i} \; .
\end{align*}
A direct computation shows that
$$\langle \cT(\xi) , \cT(\xi') \rangle = \exp(\langle \xi-1,\xi'-1\rangle) = \langle \exp(\xi-1), \exp(\xi'-1) \rangle$$
for all $\xi,\xi'$ of such a form. The vectors $\exp(\xi-1)$ are total in the symmetric Fock space $\cF_s(\cH)$ and the functions $\cT(\xi)$ are total in $L^2_\R(X,\mu)$. Therefore, $\cT$ uniquely extends to a continuous map $\cT : 1+L^2_\R(X_0,\mu_0) \to L^{2}_\R(X,\mu)$ and the isomorphism $U$ in \eqref{eq.fock-iso} is uniquely defined by $U(\exp(\xi-1)) = \cT(\xi)$ for all $\xi \in 1 + L^2_\R(X_0,\mu_0)$.

When $\xi,\xi' \in 1+L^2_\R(X_0,\mu_0)$ are such that their product $\xi \xi'$ still belongs to $1 + L^2_\R(X_0,\mu_0)$, we have
\begin{equation}\label{eq.product-formula}
\cT(\xi) \, \cT(\xi') = \exp(\langle \xi-1,\xi'-1\rangle) \, \cT(\xi \xi') \; ,
\end{equation}
which is immediate when $\xi,\xi'$ only take finitely many values and is then valid for all $\xi,\xi'$ by density and continuity.

When $\xi \in 1 + L^2_\R(X_0,\mu_0)$ satisfies $\xi(x) \geq 0$ for a.e.\ $x \in X_0$, we have by construction that $\cT(\xi)(x) \geq 0$ for a.e.\ $x \in X$. We next prove that if $\xi(x) > 0$ for a.e.\ $x \in X_0$, then $\cT(\xi)(x) > 0$ for a.e.\ $x \in X$. To see this, define $C_n = \{x \in X \mid n^{-1} \leq \xi(x) \leq n\}$ and define $\xi_n \in 1 + L^2_\R(X_0,\mu_0)$ by $\xi_n(x) = \xi(x)^{-1}$ if $x \in C_n$ and $\xi_n(x) = 1$ if $x \not\in C_n$. Define $F_n \in L^2_\R(X,\mu)$ by $F_n = \exp(-\langle \xi - 1, \xi_n - 1 \rangle) \, \cT(\xi_n)$. By \eqref{eq.product-formula}, we get that $\cT(\xi) F_n \to 1$ in $L^2_\R(X,\mu)$. Therefore, $\cT(\xi)(x) \neq 0$ for a.e.\ $x \in X$.

Note, for later use, that it follows in particular that if $\xi \in 1+L^2_\R(X_0,\mu_0)$ is such that also $\xi^{-1} \in 1+L^2_\R(X_0,\mu_0)$, then
\begin{equation}\label{eq.inverse}
\cT(\xi)^{-1} = \exp(-\langle \xi-1,\xi^{-1} - 1 \rangle) \, \cT(\xi^{-1}) \; .
\end{equation}

Now assume that $\nu_0$ is a $\sigma$-finite measure on $X_0$ such that $\nu_0 \sim \mu_0$. Write $\xi = \sqrt{d\nu_0/d\mu_0}$ and assume that $\xi \in 1 + L^2_\R(X_0,\mu_0)$. Define $F \in L^2(X,\mu)$ by $F = \exp(-\|\xi-1\|^2_2/2) \, \cT(\xi)$. By the discussion above, $F(x)>0$ for a.e.\ $x \in X$. Also,
$$\int_X F^2 \, d\mu = \exp(-\|\xi-1\|_2^2) \; \langle \cT(\xi), \cT(\xi) \rangle = 1 \; .$$
We can thus define a unique probability measure $\nu \sim \mu$ such that
\begin{equation}\label{eq.formula-nu}
\sqrt{d\nu/d\mu} = \exp(-\|\sqrt{d\nu_0/d\mu_0}-1\|^2_2/2) \; \cT(\sqrt{d\nu_0/d\mu_0}) \; .
\end{equation}
We show that for every $A \in \cB_0$, the variable $P_A$ has, w.r.t.\ $\nu$, a Poisson distribution with intensity $\nu_0(A)$. Let $\al > 0$ be arbitrary. Still writing $\xi = \sqrt{d\nu_0/d\mu_0}$, define $\xi_A \in 1 + L^2_\R(X_0,\mu_0)$ by $\xi_A(x) = \al \xi(x)$ if $x \in A$ and $\xi_A(x) = \xi(x)$ if $x \not\in A$. By \eqref{eq.product-formula}, we get that
$$\al^{P_A} \, \cT(\xi) = \exp\Bigl( (\al-1) \int_A \xi \, d\mu_0 \Bigr) \; \cT(\xi_A) \; .$$
It follows that
$$\int_X \al^{P_A} \, d\nu = \exp(-\|\xi-1\|_2^2) \; \langle \al^{P_A} \, \cT(\xi) , \cT(\xi) \rangle = \exp((\al-1)\nu_0(A)) \; .$$
Since this holds for every $\al > 0$, the distribution of $P_A$ w.r.t.\ $\nu$ is Poisson with intensity $\nu_0(A)$.

Finally, given $A \in \cB_0$, the partition $X_0 = A \sqcup (X_0 \setminus A)$ gives rise to a decomposition of $X$ as the direct product of the Poisson suspension of $(A,\mu_0)$ and the Poisson suspension of $(X_0 \setminus A,\mu_0)$. By \eqref{eq.product-formula} and \eqref{eq.formula-nu}, also $\nu$ is a product measure in this decomposition. It follows that $P_A$ is independent from $P_B$ for every $B \in \cB_0$ that is disjoint from $A$.

Altogether, we have thus proven that $(X,\nu)$ is the Poisson suspension of $(X_0,\nu_0)$. This observation implies that for every automorphisms $\theta$ satisfying \eqref{eq.L2-bound}, there is an essentially unique nonsingular Poisson suspension $\thetah$ given by \eqref{eq.characterize-suspension-aut}. So also, for every nonsingular action $G \actson (X_0,\mu_0)$ satisfying \eqref{eq.new-bound-K}, the Poisson suspension action $G \actson (X,\mu)$ is well defined.

An important step in the proof of Proposition \ref{prop:stable Poisson suspension} is to ensure that the Poisson suspension action $G \actson (X,\mu)$ is conservative. If in the discussion above, both $\xi = \sqrt{d\nu_0/d\mu_0}$ and $\xi^{-1}$ belong to $1 + L^2_\R(X_0,\mu_0)$, it follows from \eqref{eq.inverse} that
$$\sqrt{d\mu/d\nu} = \exp\bigl( \|\xi-1\|_2^2/2 - \langle \xi-1,\xi^{-1} - 1 \rangle \bigr) \, \cT(\xi^{-1}) \; ,$$
so that
\begin{equation}\label{eq:expectation inverse RN}
\int_X \frac{d\mu}{d\nu} \, d\mu = \exp\bigl( \|\xi-1\|_2^2 - 2 \langle \xi-1,\xi^{-1} - 1 \rangle + \|\xi^{-1}-1\|_2^2\bigr) = \exp(\|\xi-\xi^{-1}\|_2^2) \; .
\end{equation}
So whenever $G \actson (X_0,\mu_0)$ is a nonsingular action, we denote
\begin{align}\label{eq:kappa}
\kappa(g)=\bigl\|\sqrt{d(g\mu_0) / d\mu_0}- \sqrt{d\mu_0 / d(g \mu_0)}\bigr\|^2_2 \in [0,+\infty] \; .
\end{align}
The following result is then an immediate consequence of \eqref{eq:expectation inverse RN} and \cite[Lemma 2.6]{BV20} (adapted to the locally compact setting), and is similar to \cite[Lemma 1.3]{DK20}.

\begin{proposition}\label{prop:conservativeness criterion Poisson}
Let $G\actson(X_0,\mu_0)$ be a nonsingular action of a locally compact second countable group $G$ on an infinite, $\sigma$-finite standard measure space $(X_0,\mu_0)$ such that \eqref{eq.new-bound-K} holds. Let $\lambda$ denote a left invariant Haar measure on $G$. If
\begin{align*}
\limsup_{s\rightarrow +\infty}\frac{\log\lambda(\{g\in G \mid \kappa(g^{\pm 1})\leq s\})}{s}>3 \; ,
\end{align*}
then the nonsingular Poisson suspension $G\actson (X,\mu)$ is conservative.
\end{proposition}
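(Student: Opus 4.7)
The proposition is obtained by combining two ingredients: the Poisson suspension identity \eqref{eq:expectation inverse RN}, which translates $\kappa(g)$ into an $L^{1}$-statement about the Radon--Nikodym derivative $d\mu/d(g\mu)$, and an abstract conservativity criterion \cite[Lemma 2.6]{BV20} adapted to lcsc groups.

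First I would check that, for every $g \in G$ with $\kappa(g^{\pm 1}) < +\infty$, one has
\begin{align*}
\int_X \frac{d\mu}{d(g\mu)}(x) \, d\mu(x) = \exp(\kappa(g))
\end{align*}
and similarly for $g^{-1}$. This is a direct consequence of \eqref{eq:expectation inverse RN}: by functoriality of the Poisson suspension, the pushforward $g\mu$ on $X$ is the Poisson suspension of $g\mu_0$ on $X_0$, so one may apply \eqref{eq:expectation inverse RN} with $\nu_0 = g\mu_0$ and $\xi = \sqrt{d(g\mu_0)/d\mu_0}$; the finiteness of both $\kappa(g)$ and $\kappa(g^{-1})$ is exactly what ensures that $\xi$ and $\xi^{-1}$ belong to $1+L^{2}_\R(X_0,\mu_0)$, and the RHS of \eqref{eq:expectation inverse RN} is then precisely $\exp(\kappa(g))$.

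Next I would invoke the lcsc-version of \cite[Lemma 2.6]{BV20}, which gives conservativity of any nonsingular action $G \actson (X,\mu)$ as soon as
\begin{align*}
\limsup_{s \to +\infty} \frac{\log \lambda\bigl(\{g \in G : \textstyle\int_X d\mu/d(g^{\pm 1}\mu) \, d\mu \leq e^{s}\}\bigr)}{s} > 3.
\end{align*}
Substituting the identity above rewrites the quantity $\int_X d\mu/d(g^{\pm 1}\mu) \, d\mu$ as $\exp(\kappa(g^{\pm 1}))$, and the hypothesis transforms exactly into the assumption of the proposition. Conservativity of the Poisson suspension action follows.

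The main obstacle is the adaptation of \cite[Lemma 2.6]{BV20} to the lcsc setting, although this is routine. The strategy in the countable case is: fix any Borel $A \subset X$ with $\mu(A)>0$; Cauchy--Schwarz applied to $1 = \sqrt{d(g\mu)/d\mu} \cdot \sqrt{d\mu/d(g\mu)}$ on $A$ yields the lower bound $\mu(g^{\pm 1}A) \geq \mu(A)^{2} \exp(-\kappa(g^{\pm 1}))$; then a second-moment estimate inside a large enough set $S_s = \{g : \kappa(g^{\pm 1}) \leq s\}$ produces, whenever $\lambda(S_s)$ is sufficiently larger than $e^{2s}$, pairs $(g,g') \in S_s \times S_s$ with $\mu(gA \cap g'A) > 0$, hence $h = g^{-1}g'$ satisfies $\mu(hA \cap A) > 0$. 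The constant $3$ absorbs the two symmetric Cauchy--Schwarz losses (one per sign of $g$) together with the second-moment step. In the lcsc case sums over $g$ are replaced by Haar integrals and the same estimates carry through, closely mirroring \cite[Lemma 1.3]{DK20}.
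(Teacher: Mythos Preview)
Your proposal is correct and follows exactly the paper's own argument: the paper states that the proposition is an immediate consequence of \eqref{eq:expectation inverse RN} together with \cite[Lemma 2.6]{BV20} adapted to the locally compact setting (and notes the similarity with \cite[Lemma 1.3]{DK20}), which is precisely what you spell out. One minor remark: finiteness of $\kappa(g)$ alone already gives $\xi,\xi^{-1}\in 1+L^{2}_{\R}(X_0,\mu_0)$, since $|a-1|\leq|a-a^{-1}|$ and $|a^{-1}-1|\leq|a-a^{-1}|$ for all $a>0$.
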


For lcsc groups $G$ with the Haagerup approximation property, one can give a very short proof of Proposition \ref{prop:stable Poisson suspension} by only using Proposition \ref{prop:conservativeness criterion Poisson}. For the general case, where $G$ is only assumed to be non-(T), we have to make use of recurrence of $G \actson (X,\mu)$ along specific subsets $\Lambda \subset G$. We thus recall the following two definitions from \cite{AIM19}.

\begin{definition}[{\cite[Definitions 7.12 and 7.13]{AIM19}}]
Let $\pi\colon G\actson \cH$ be an orthogonal representation on a real Hilbert space $\cH$ and $\Lambda \subset G$ a countable subset. We say that $\pi$ is \emph{mixing along $\Lambda$} if
\begin{align*}
\lim_{g\in \Lambda,\;\;g\rightarrow \infty} \langle\pi_g(\xi),\eta\rangle=0 \;\;\text{for every $\xi,\eta\in \mathcal{H}$.}
\end{align*}
A pmp action $G \actson (X,\mu)$ is \emph{mixing along $\Lambda$} if the reduced Koopman representation
\begin{align*}
\pi\colon G\rightarrow \mathcal{O}(L^2_\R(X,\mu)\ominus \R 1): (\pi_g\xi)(x)=\xi(g^{-1} \cdot x)
\end{align*}
is mixing along $\Lambda$.

A nonsingular action $G \actson (X,\mu)$ is said to be \emph{recurrent along $\Lambda$} if for every nonnegligible Borel set $A\subset X$ there exist infinitely many $g\in \Lambda$ such that $\mu(g\cdot A\cap A)>0$.
\end{definition}

Similarly to Proposition \ref{prop:conservativeness criterion Poisson}, combining \cite[Lemma 7.15]{AIM19} and \cite[Lemma 2.6]{BV20}, we then get the following.

\begin{proposition}\label{prop.recurrence-along-Poisson}
Let $G\actson(X_0,\mu_0)$ be a nonsingular action of a locally compact second countable group $G$ on an infinite, $\sigma$-finite standard measure space $(X_0,\mu_0)$ such that \eqref{eq.new-bound-K} holds. Let $\Lambda \subset G$ be a countable infinite subset. If
\begin{align*}
\limsup_{s\rightarrow +\infty}\frac{\log |\{g\in \Lambda \mid \kappa(g^{\pm 1})\leq s\}|}{s}>3 \; ,
\end{align*}
then the nonsingular Poisson suspension $G\actson (X,\mu)$ and, for every pmp action $G \actson (Y,\nu)$, the Maharam extension of the diagonal action $G \actson X \times Y$ are recurrent along $\Lambda\Lambda^{-1}$.
\end{proposition}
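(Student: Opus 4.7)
The plan is to repeat the argument of Proposition \ref{prop:conservativeness criterion Poisson} essentially verbatim, but at the very end to invoke \cite[Lemma 7.15]{AIM19} in order to upgrade the decay of Koopman matrix coefficients along $\Lambda$ into recurrence along the convolution square $\Lambda\Lambda^{-1}$.

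The central computation is already available. By functoriality of the Poisson-suspension construction, the push-forward $g\mu = \widehat{g}_*\mu$ is itself the Poisson suspension of $g\mu_0$. Applying \eqref{eq:expectation inverse RN} with $\nu_0 = g\mu_0$ thus yields
\[
\int_X \frac{d\mu}{d(g\mu)}\, d\mu \;=\; \exp(\kappa(g)) \qquad\text{for every } g \in G,
\]
and the same identity holds with $g$ replaced by $g^{-1}$. The growth hypothesis on $\{g\in\Lambda : \kappa(g^{\pm 1}) \leq s\}$ is therefore exactly the hypothesis of \cite[Lemma 2.6]{BV20}, which supplies the required quantitative mixing of the Koopman representation of $G \actson (X,\mu)$ along $\Lambda$. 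Plugging this mixing into \cite[Lemma 7.15]{AIM19} produces the first half of the conclusion: the Poisson suspension $G \actson (X,\mu)$ is recurrent along $\Lambda\Lambda^{-1}$.

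For the Maharam extension of the diagonal action $G \actson X \times Y$, observe that since $\nu$ is $G$-invariant the Radon-Nikodym cocycle of $G \actson (X\times Y, \mu\times\nu)$ is the pullback, via the first coordinate projection, of the Radon-Nikodym cocycle of $G \actson (X,\mu)$. Consequently,
\[
\int_{X\times Y} \frac{d(\mu\times\nu)}{d(g(\mu\times\nu))}\, d(\mu\times\nu) \;=\; \int_X \frac{d\mu}{d(g\mu)}\, d\mu \;=\; \exp(\kappa(g)),
\]
and likewise for $g^{-1}$. Passing to the Maharam extension on $X \times Y \times \R$ equipped with the invariant measure $d\mu \times d\nu \times e^{-t}d\lambda(t)$ preserves exactly this Koopman-type estimate, because the fibrewise shift on the $\R$-component is governed by the very same Radon-Nikodym derivative. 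A second invocation of the combination \cite[Lemma 2.6]{BV20}--\cite[Lemma 7.15]{AIM19} then delivers recurrence of the Maharam extension along $\Lambda\Lambda^{-1}$.

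The main obstacle is purely organizational: one must keep straight the three actions at play (the Poisson suspension, its diagonal with $(Y,\nu)$, and the Maharam extension of the latter) and verify that a single $\kappa$-estimate governs the Koopman coefficients in all three cases. No new analytic content beyond \eqref{eq:expectation inverse RN} is required, since the invariance of $\nu$ eliminates any additional cocycle contribution and the Maharam construction introduces no further integration in the first variable.
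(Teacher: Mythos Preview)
Your proposal cites exactly the same three ingredients as the paper's one-sentence proof sketch: the moment identity \eqref{eq:expectation inverse RN}, \cite[Lemma 2.6]{BV20}, and \cite[Lemma 7.15]{AIM19}. In that sense the approach coincides with the paper's.

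Two points of the write-up do not hold up, however. First, the description of \cite[Lemma 2.6]{BV20} as delivering ``quantitative mixing of the Koopman representation along $\Lambda$'' mislabels what that lemma does. It is not a mixing statement; it is a recurrence-type criterion that turns the growth hypothesis on the level sets of $\kappa$, together with the first-moment bound $\int_X d\mu/d(g\mu)\,d\mu=\exp(\kappa(g))$ supplied by \eqref{eq:expectation inverse RN}, into the almost-everywhere divergence condition that \cite[Lemma 7.15]{AIM19} consumes to output recurrence along $\Lambda\Lambda^{-1}$. No Koopman decay enters.

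Second, your paragraph on the Maharam extension does not work as written. The Maharam extension acts on $X\times Y\times\R$ preserving the infinite measure $d\mu\times d\nu\times e^{-t}\,dt$, so the quantity $\int d(\mu\times\nu)/d(g(\mu\times\nu))\,d(\mu\times\nu)$ simply is not available there, and the phrase ``preserves exactly this Koopman-type estimate'' has no content. The actual mechanism is that \cite[Lemma 7.15]{AIM19} (in combination with \cite[Lemma 2.6]{BV20}) is already formulated to yield recurrence of the Maharam extension of $G\actson X\times Y$ directly from the base-level hypothesis; no second invocation or separate estimate on the extension is needed. Once you drop the ``mixing'' language and the redundant Maharam paragraph, the argument collapses to the paper's one-liner.
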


The first step in proving Proposition \ref{prop:stable Poisson suspension} is to translate the absence of property~(T) to a dynamical property: there exists a measure preserving action $G \actson (W,\rho)$ on a standard, infinite, $\sigma$-finite measure space such that the Koopman representation on $L^2(W,\rho)$ is weakly mixing and the action admits a F{\o}lner sequence, i.e.\ a sequence of Borel sets $A_n \subset W$ such that $0<\rho(A_n)<+\infty$ for every $n \in \N$ and
$$\lim_{n\rightarrow \infty}\frac{\rho(g \cdot A_n\vartriangle A_n)}{\rho(A_n)} = 0 \quad\text{uniformly on compact subsets $K\subset G$.}$$
This characterization is proven in \cite[Section 4]{Dan21} and is a consequence of the Connes-Weiss characterization of property~(T) (see \cite{CW80} and \cite[Theorem 6.3.4]{BHV08}): taking a weakly mixing pmp action $G \actson (B,\beta)$ that admits a F{\o}lner sequence $B_n \subset B$ with $\beta(B_n) = 1/2$ for all $n \in \N$, we can pass to a subsequence and assume that for every compact $K \subset G$,
$$\sum_{n=1}^\infty \sup_{g \in K} \beta(g \cdot B_n \vartriangle B_n) < +\infty \; .$$
Then, the diagonal action of $G$ on the restricted infinite product
$$W = \prod_{n=1}^\infty (B,B_n) = \{x \in B^\N \mid x_n \in B_n \;\;\text{for all but finitely many $n \in \N$}\;\}$$
equipped with the product measure $\rho = (2\be)^\N$ is well defined and satisfies all the requirements.

\begin{lemma}\label{lem.technical-suspension-lemma}
Let $G$ be a lcsc group and $G \actson (W,\rho)$ a measure preserving action on a standard, infinite, $\sigma$-finite measure space such that its Koopman representation is weakly mixing and the action admits a F{\o}lner sequence $A_n \subset W$. Let $\lambda_n > 0$ and $a_n \in \R \setminus \{0\}$. Define $X_0 = W \times \N$ and define the measure $\mu_0$ on $X_0$ by
$$\mu_0(\cU \times \{n\}) = \lambda_n \, \rho(A_n)^{-1} \, \bigl( \rho(\cU \cap A_n) + e^{a_n} \, \rho(\cU \setminus A_n)\bigr) \; .$$
Consider the nonsingular action $G \actson (X_0,\mu_0)$ defined by $g \cdot (x,n) = (g \cdot x , n)$.

After replacing $A_n$ by a subsequence, we get that the Poisson suspension $G \actson (X,\mu)$ of $G \actson (X_0,\mu_0)$ is well defined and weakly mixing, and has the property that for any ergodic pmp action $G\actson (Y,\nu)$, the associated flow of the diagonal action $G\actson X\times Y$ is given by the tail boundary flow of the sequence of Poisson distributions $\si_{\lambda_n,a_n}$ defined by \eqref{eq:Poisson dist}.
\end{lemma}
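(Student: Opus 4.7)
The proof splits into four steps: subsequencing $(A_n)$; verifying well-definedness together with essential freeness, conservativeness, and weak mixing; setting up an explicit factor map to the tail-boundary input space; and identifying the $G$-invariants on the Maharam extension with the tail boundary algebra. The last step is the main obstacle.

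\emph{Step 1: Subsequencing.} Put $c_n:=\lambda_n(1+e^{|a_n|})(e^{a_n/2}-e^{-a_n/2})^2$, a constant depending only on $\lambda_n,a_n$, and fix an increasing compact exhaustion $K_1\subset K_2\subset\cdots$ of $G$. Using the F{\o}lner property of $(A_n)$, pass to a subsequence such that $\sup_{g\in K_n}\rho(g\cdot A_n\vartriangle A_n)/\rho(A_n)\leq c_n^{-1}\,2^{-n}$, and, since $G$ is non-(T), further arrange the growth condition of Proposition~\ref{prop.recurrence-along-Poisson} for a well-chosen countable $\Lambda\subset G$. A slice-by-slice Kakutani computation gives
\[
\bigl\|\sqrt{d(g\mu_0)/d\mu_0}-1\bigr\|_2^2=\sum_n\lambda_n(e^{a_n/2}-1)^2\,\rho(A_n)^{-1}\,\rho(g\cdot A_n\vartriangle A_n),
\]
with an analogous expression for $\kappa(g)$; both are bounded uniformly on compact subsets of $G$, so \eqref{eq.new-bound-K} holds and the Poisson suspension $G\actson(X,\mu)$ is well defined. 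Recurrence of the Maharam extension of $G\actson X\times Y$ along $\Lambda\Lambda^{-1}$ follows from Proposition~\ref{prop.recurrence-along-Poisson}; essential freeness reduces, by standard functoriality of the Poisson suspension, to essential freeness of $G\actson W$, which is ensured by the standard construction. Weak mixing of $G\actson(X,\mu)$ uses the canonical identification $L^2_\R(X,\mu)\cong\R\oplus\bigoplus_{n\geq 1}\cH_n$ with $\cH=L^2_\R(X_0,\mu_0)$: weak mixing of the Koopman representation of $G\actson(W,\rho)$ transfers slice-wise to $G\actson(X_0,\mu_0)$ on the orthogonal complement of the constants, and hence to all symmetric tensor powers.

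\emph{Step 2: Factor map.} Let $P_n^+(\omega):=|\omega\cap A_n\times\{n\}|$; by the Poisson property the random variables $(a_nP_n^+)_n$ are independent with $a_nP_n^+\sim\sigma_{\lambda_n,a_n}$. A direct computation yields
\[
\log\tfrac{d(g\mu)}{d\mu}(\omega)=\sum_n a_n\bigl(N_n^+(g,\omega)-N_n^-(g,\omega)\bigr),
\]
where $N_n^+(g,\omega)=|\omega\cap(A_n\setminus g\cdot A_n)\times\{n\}|$ and $N_n^-(g,\omega)=|\omega\cap(g\cdot A_n\setminus A_n)\times\{n\}|$; the normalization constant vanishes because $\rho(A_n\setminus g\cdot A_n)=\rho(g\cdot A_n\setminus A_n)$. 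Define
\[
\Phi:X\times\R\times Y\to\R\times\textstyle\prod_n\R\times Y,\qquad \Phi(\omega,t,y)=\bigl(t,(a_nP_n^+(\omega))_n,y\bigr),
\]
pushing the Maharam measure forward to $e^{-t}dt\times\prod_n\sigma_{\lambda_n,a_n}\times\nu$. The claim is that $\Phi^*$ identifies $L^\infty(X\times\R\times Y)^G$ with $A\,\ovt\,1$, where $A\subset L^\infty(\R\times\prod_n\R)$ is the tail boundary algebra of $(\sigma_{\lambda_n,a_n})_n$.

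\emph{Step 3: Identifying the invariants -- the hard part.} One inclusion is reached as follows: for a tail function $F$ and any $g\in G$, pick $m$ large enough that $g\in K_m$, write $F=G_m\circ\pi_m$, and estimate the discrepancy $\Phi^*F\circ g-\Phi^*F$ in $L^2$ using that the tail contribution to the cocycle has variance $O(2^{-m})$ by the F{\o}lner subsequencing; passing $m\to\infty$ and invoking a standard density argument yields $G$-invariance. The reverse inclusion is the main obstacle. Given $G$-invariant $F$, ergodicity of $G\actson Y$ removes the $Y$-dependence. To show that $F$ depends on $\omega$ only through the counts $(P_n^+(\omega))_n$, one needs a Poisson-suspension analogue of Lemma~\ref{lem:Z is in S}: the closure of $G$ in $\Aut(X,\mu)$ contains enough measure-preserving rearrangements of configuration points separately inside each $A_n\times\{n\}$ and inside each $(W\setminus A_n)\times\{n\}$ (so as to preserve $\mu$) to wipe out every dependence beyond the counts -- the complement counts are a.s.\ infinite and contribute nothing. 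This is a genuine rigidity statement, analogous to the symmetric-group trick used in the proof of Theorem~\ref{thm.main-structure-variant} but adapted to the Poisson setting, and is the hardest single ingredient. Granted this reduction, $G$-invariance on the level of $(t,(P_n^+)_n)$ combined with the explicit cocycle formula and a Kolmogorov-type tail argument forces $F\in A$.
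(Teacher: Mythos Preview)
Your proposal correctly identifies the architecture of the argument but leaves a genuine gap at the decisive point. In Step~3 you reduce the hard inclusion to a ``Poisson-suspension analogue of Lemma~\ref{lem:Z is in S}'': that the closure of $G$ in $\Aut(X,\mu)$ contains enough measure-preserving rearrangements inside each $A_n\times\{n\}$ and $(W\setminus A_n)\times\{n\}$ to wash out all dependence beyond the counts $P_n^+$. You do not prove this, and there is no reason to expect it to hold: Lemma~\ref{lem:Z is in S} is a delicate statement specific to the $\Z$-Bernoulli setting, relying on the explicit cyclic approximation of the shift by finite permutations, and does not export to a general lcsc group acting on a Poisson suspension. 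Without this rigidity input, Step~3 is a wish, not an argument.

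The paper sidesteps this obstacle entirely by a change-of-measure trick. Writing $(X,\mu)=\prod_n(X_n,\mu_n)$ with $(X_n,\mu_n)$ the Poisson suspension of $(W,\rho_n)$, one introduces the equivalent probability measure $\nu_n$ with $d\nu_n/d\mu_n=\cT(1_{W\setminus A_n}+e^{a_n}1_{A_n})$, so that $G\actson(X_n,\nu_n)$ becomes the Poisson suspension of the \emph{measure-preserving} action $G\actson(W,\beta_n\rho)$, hence pmp and mixing along $\Lambda\Lambda^{-1}$ via the Fock-space picture. The map $\Psi_N$ that shifts $t$ by $\sum_{n\le N}\psi_n$ (with $\psi_n=-\log d\mu_n/d\nu_n$) then conjugates the Maharam extension of $\alpha\times\gamma$ to the product of the pmp action $\alpha'_N$ on $\prod_{n\le N}(X_n,\nu_n)$ with the Maharam extension of $\alpha''_N\times\gamma$. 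Since the latter is recurrent along $\Lambda\Lambda^{-1}$ and the former is mixing along $\Lambda\Lambda^{-1}$, \cite[Theorem~7.14]{AIM19} kills the $X'_N$-dependence outright. Letting $N\to\infty$ yields the factorisation through the tail, with no need for any symmetry or closure statement.

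Two smaller issues. First, your weak-mixing argument in Step~1 invokes the Fock-space identification as if $G\actson(X_0,\mu_0)$ were measure-preserving; it is not, so the reduced Koopman representation is not simply the symmetric tensor power of the base Koopman representation. In the paper, weak mixing is not argued separately but is a corollary of the main computation (ergodicity of $G\actson X\times Y$ for every ergodic pmp $Y$). Second, you invoke non-(T) in Step~1, but the lemma does not assume this; the countable $\Lambda$ along which the Koopman representation is mixing comes from weak mixing alone via \cite[Lemma~7.17]{AIM19}, and the growth condition for recurrence is arranged purely from the F{\o}lner property.
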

\begin{proof}
Denote by $\pi : G \to \cU(L^2(W,\rho)) : (\pi(g)\xi)(x) = \xi(g^{-1}\cdot x)$ the Koopman representation. By \cite[Lemma 7.17]{AIM19}, we can fix an infinite subset $\Lambda\subset G$ such that $\pi$ is mixing along $\Lambda \Lambda^{-1}$. Define
\begin{align*}
& \kappa_n : G \to [0,+\infty) : \kappa_n(g) = \frac{1}{2} \, \lambda_n \, \frac{\rho(g \cdot A_n \vartriangle A_n)}{\rho(A_n)} \, (e^{a_n} - e^{-a_n})(e^{a_n} - 1) \quad\text{and}\\
& \kappa_0 : G \to [0,+\infty] : \kappa_0(g) = \sum_{n=1}^\infty \kappa_n(g) \; .
\end{align*}
Replacing $A_n$ by a subsequence, we may assume that
\begin{align*}
& \sup_{g \in K} \kappa_0(g) <+\infty \quad\text{for every compact subset $K \subset G$, and}\\
& \limsup_{s\rightarrow +\infty}\frac{\log | \{g\in \Lambda \mid \kappa_0(g^{\pm 1})\leq s \}|}{s}>3 \; .
\end{align*}

Define the measure $\mu_0$ as in the formulation of the lemma. A direct computation gives that
$$\bigl\| \sqrt{d(g \cdot \mu_0)/d\mu_0} - \sqrt{d\mu_0 / d(g \cdot \mu_0)} \bigr\|^2_2 = \kappa_0(g) < +\infty \; .$$
Since $|a-1| \leq |a - a^{-1}|$ for all $a > 0$, we also find that \eqref{eq.new-bound-K} holds. So, the Poisson suspension $G\actson (X,\mu)$ of $G\actson (X_0,\mu_0)$ is well-defined. By Proposition \ref{prop.recurrence-along-Poisson}, the action $G \actson (X,\mu)$ is recurrent along $\Lambda \Lambda^{-1}$.

Let $\gamma : G\actson (Y,\nu)$ be any ergodic pmp action. We prove that the diagonal action $G \actson X \times Y$ is ergodic and that its associated flow is isomorphic with $\R \actson Z$. Part of this argument is similar to the proof of \cite[Proposition 1.17]{Dan21}.

Denote by $\rho_n$ the measure on $W$ given by $\rho_n(\cU) = \mu_0(\cU \times \{n\})$. Viewing $X_0$ as the disjoint union of the $G$-invariant subsets $W\times \{n\}$, we identify $(X,\mu)$ with $\prod_{n = 1}^\infty (X_n,\mu_n)$, where $(X_n,\mu_n)$ is the Poisson suspension of $(W,\rho_n)$. Then $\al : G \actson (X,\mu)$ is the diagonal product action of $\al_n : G \actson (X_n,\mu_n)$. Define the probability measures $\nu_n\sim \mu_n$ by
\begin{align}\label{eq:curly E RN derivative}
\frac{d\nu_n}{d\mu_n}=\cT(1_{W \setminus A_n} + e^{a_n} 1_{A_n}) \; .
\end{align}
Then $G\actson (X_n,\nu_n)$ is the Poisson suspension of $G\actson (W,\beta_n \rho)$, with $\beta_n = \lambda_n e^{a_n} \rho(A_n)^{-1}$. Since $G\actson (W,\beta_n\rho)$ is measure preserving, the probability measure $\nu_n$ is $G$-invariant. Through \eqref{eq.fock-iso}, the Koopman representation of $G \actson (X_n,\nu_n)$ is the natural representation of $G$ on the symmetric Fock space $\cF_s(L^2_\R(W,\beta_n \rho))$ associated with the Koopman representation of $G$ on $L^2_\R(W,\beta_n \rho)$. Therefore, $G \actson (X_n,\nu_n)$ is weakly mixing along $\Lambda \Lambda^{-1}$.

Consider the Maharam extension $G \actson X \times Y \times \R$ of the diagonal action $G \actson X \times Y$. Fix $F \in L^\infty(X \times Y \times \R)^G$. Define $\psi_n=-\log d\mu_n/d\nu_n$. Fix $N \in \N$ and write
$$(X'_N,\mu'_N) = \prod_{n=1}^N (X_n,\mu_n) \quad\text{and}\quad (X\dpr_N,\mu\dpr_N) = \prod_{n=N+1}^\infty (X_n,\mu_n) \; .$$
We identify $X = X'_N \times X\dpr_N$ and define
\begin{multline*}
\mbox{}\hspace{1cm}\Psi_N : X'_N \times X\dpr_N \times Y \times \R \to X'_N \times X\dpr_N \times Y \times \R : \\ \Psi_N(x',x\dpr,y,t) = \Bigl(x',x\dpr,y,t+\sum_{n=1}^N \psi_n(x'_n)\Bigr) \; .\hspace{1cm}\mbox{}
\end{multline*}
We denote by $\al'_N : G \actson X'_N$ and $\al\dpr_N : G \actson X\dpr_N$ the obvious diagonal product actions, so that $\al$ is the diagonal product of $\al'_N$ and $\al\dpr_N$.

Since $\nu_n$ is $G$-invariant, the map $\Psi_N$ intertwines the Maharam extension of $\al \times \gamma$ with the diagonal product of $\al'_N$ and the Maharam extension of $\al\dpr_N \times \gamma$. Since the pmp action $\al'_N$ of $G$ on $(X'_N,\nu_1 \times \cdots \times \nu_N)$ is mixing along $\Lambda \Lambda^{-1}$ and since the Maharam extension of $\al\dpr_N \times \gamma$ is recurrent along $\Lambda \Lambda^{-1}$, it follows from \cite[Theorem 7.14]{AIM19} that the function $F \circ \Psi_N^{-1}$ is essentially independent of the $X'_N$-variable.

So, for every $N \in \N$, we find a unique $F_N \in L^\infty(X\dpr_N \times Y \times \R)$ with $\|F_N\|_\infty = \|F\|_\infty$ and $F = F_N \circ \Psi_N$ a.e. Define the probability measures $\zeta_n = (\psi_n)_*(\mu_n)$ on $\R$. Denote
$$(\Omega,\zeta) = \prod_{n =1}^\infty (\R,\zeta_n)$$
and realize the tail boundary $B$ of the sequence $(\zeta_n)_{n \in \N}$ inside $L^\infty(\R \times \Om)$. Consider the natural factor map
$$\Phi : X \times Y \times \R \to \R \times \Om \times Y : \Phi(x,y,t) = (t, (\psi_n(x_n))_{n \in \N} , y) \; .$$
We have thus proven that for every $F \in L^\infty(X \times Y \times \R)^G$, there exists a unique $H \in B \ovt L^\infty(Y)$ such that $F = H \circ \Phi$ a.e.

We also consider the diagonal action $\al'_N \times \id \times \gamma$ of $G$ on $X \times Y$, with its Maharam extension $M_N : G \actson X \times Y \times \R$. Since $(\al'_N \times \id \times \gamma)_g \to (\al \times \gamma)_g$ in $\Aut(X \times Y)$, we also have that $M_{N,g}(F) \to F$ weakly. For every $K \in B \ovt L^\infty(Y)$, we have that $\Phi_*((\id \ot \gamma_g)(K)) = M_{N,g}(\Phi_*(K))$ for all $N \in \N$. Since $F = \Phi_*(H)$, we conclude that $\Phi_*((\id \ot \gamma_g)(H)) = \Phi_*(H)$ for all $g \in G$. Since the action $\gamma$ is ergodic, we conclude that $H \in B \ot 1$.

Conversely, for every $H \in B$, we find that $\Phi_*(H \ot 1)$ is invariant under $M_{N,g}$ for all $N \in \N$ and $g \in G$. Since $M_{N,g}$ converges to the Maharam extension of $\al \times \gamma$, it follows that $\Phi_*(H \ot 1)$ is invariant under the latter. We have thus identified the associated flow of $G \actson X \times Y$ with the tail boundary flow of the sequence $(\zeta_n)_{n \in \N}$.

By \eqref{eq:curly E RN derivative} and using the notation \eqref{eq:T notation},
$$
\psi_n =\log(T_{e^{a_n}, A_n}) = (1-e^{a_n})\rho_n(A_n)+a_nP_{A_n} \; .
$$
Therefore, $\zeta_n$ is a translation of the Poisson distribution with support $\{ka_n: k=0,1,2,\dots\}$ and intensity $\rho_n(A_n)=\lambda_n$, i.e.\ a translation of $\si_{\lambda_n,a_n}$. This concludes the proof of the lemma.
\end{proof}

\begin{proof}[Proof of Proposition \ref{prop:stable Poisson suspension}]
Take a lcsc group $G$ that does not have property (T) and take a Poisson flow $\R \actson Z$. By Proposition \ref{prop.standard-poisson} and using the notation in \eqref{eq:Poisson dist}, we write $\R \actson Z$ as the tail boundary flow of a sequence $\sigma_{\lambda_n,a_n}$ with $\lambda_n > 0$ and $a_n \in \R \setminus \{0\}$. As explained above, we can take a measure preserving action $G \actson (W,\rho)$ on a standard, infinite, $\sigma$-finite measure space such that the Koopman representation $\pi$ of $G$ on $L^2(W,\rho)$ is weakly mixing and the action admits a F{\o}lner sequence $A_n \subset W$.

Replace $A_n$ by a subsequence and define $(X_0,\mu_0)$ so that the conclusions of Lemma \ref{lem.technical-suspension-lemma} hold. Define $V_0 = G \times \N$ and denote by $\eta_0$ the product of the Haar measure on $G$ and the counting measure on $\N$. Let $G \actson (V,\eta)$ be the Poisson suspension of $G \actson (V_0,\eta_0)$. Then, $G \actson (V,\eta)$ is a mixing pmp action that can be written as the infinite product of a faithful pmp $G$-action. Hence, $G \actson (V,\eta)$ is essentially free. Taking the disjoint union of $X_0$ and $V_0$, we obtain the nonsingular action $G \actson (X_0 \sqcup V_0,\mu_0 \sqcup \eta_0)$ that, by Lemma \ref{lem.technical-suspension-lemma} satisfies all the conclusions of the proposition.
\end{proof}

\section{Bernoulli actions of amenable groups: proof of Theorem \ref{thm.main-example}}\label{sec:amenable}

In Theorem \ref{thm:amenable Bernoulli flow} below, as in the previous section, we prove a stable version of Theorem \ref{thm.main-example}.

%We use the infinite divisibility property of Poisson distributions and a construction similar to that of \cite[Theorem 5.5]{BV20} to generate nonsingular Bernoulli actions of amenable groups of which the associated flow is isomorphic with any Poisson flow (see Definition \ref{def.poisson-flow}).

\begin{theorem}\label{thm:amenable Bernoulli flow}
Let $\R\actson (Z,\zeta)$ be a Poisson flow and let $G$ be a countable infinite amenable group. Then there exists a countable set $X_0$ and a family of equivalent probability measures $(\mu_g)_{g\in G}$ on $X_0$ such that the Bernoulli action
\begin{align*}
G\actson \prod_{g\in G}(X_0,\mu_g)
\end{align*}
is nonsingular, weakly mixing and such that for any ergodic pmp action $G\actson (Y,\eta)$ the associated flow of the diagonal action $G\actson X\times Y$ is isomorphic with $\R\actson Z$.
\end{theorem}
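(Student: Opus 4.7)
The strategy is to deduce Theorem \ref{thm:amenable Bernoulli flow} directly from Proposition \ref{prop:stable Poisson suspension} and its proof, by arranging the Poisson suspension it produces to literally coincide with a Bernoulli shift over a countable base. Following the hint in the introduction, we apply the construction with $G\actson G\times \N$ by left translation on the first coordinate, with a carefully chosen measure $\mu_0'$ on $G\times\N$.

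By Proposition \ref{prop.standard-poisson}, first write $\R\actson(Z,\zeta)$ as the tail boundary flow of a sequence $(\sigma_{\lambda_n,a_n})_{n\in\N}$ of standard Poisson distributions. Then invoke Lemma \ref{lem.technical-suspension-lemma} with $W=G$ equipped with the counting measure $\rho$ and left translation; the left regular representation of $G$ on $\ell^2(G)$ is mixing, so the weak-mixing Koopman hypothesis is satisfied. Starting from any F{\o}lner sequence in $G$, pass to a sufficiently sparse subsequence $(A_n)_{n\in\N}$ so that, in addition to the conclusions required by the proof of Lemma \ref{lem.technical-suspension-lemma} and Proposition \ref{prop.recurrence-along-Poisson},
\begin{equation*}
\sum_{n=1}^\infty \lambda_n \, |A_n|^{-1} \, e^{|a_n|} \; < \; +\infty.
\end{equation*}
Setting $X_0' = G\times\N$ with $g\cdot(h,n)=(gh,n)$ and
\begin{equation*}
\mu_0'(\{(h,n)\}) = \lambda_n\,|A_n|^{-1}\bigl(1_{A_n}(h) + e^{a_n}(1-1_{A_n}(h))\bigr),
\end{equation*}
Lemma \ref{lem.technical-suspension-lemma} yields a well-defined Poisson suspension $G\actson(X,\mu)$ with the property that for every ergodic pmp action $G\actson(Y,\nu)$, the diagonal action $G\actson X\times Y$ is ergodic with associated flow $\R\actson Z$.

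The summability of $\sum_n \lambda_n|A_n|^{-1}e^{|a_n|}$ guarantees that $\mu_0'(\{h\}\times\N)<+\infty$ for every $h\in G$. Since $X_0'$ is discrete, the Poisson suspension decomposes as a product over $h\in G$ of independent Poisson suspensions on the slices $\{h\}\times\N$, each of which is a probability measure $\nu_h$ concentrated on the countable set
\begin{equation*}
X_0 = \bigl\{\omega\in\N^\N : \omega(n)=0 \text{ for all but finitely many } n\bigr\},
\end{equation*}
because the total intensity of the slice is finite. All individual intensities $\mu_0'(\{(h,n)\})$ being strictly positive, each $\nu_h$ has full support on $X_0$, so the family $(\mu_h)_{h\in G}:=(\nu_h)_{h\in G}$ consists of mutually equivalent probability measures on the countable base $X_0$. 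Because $G$ acts on $X_0'$ only by translating the first coordinate, the canonical identification $(X,\mu)\cong \prod_{h\in G}(X_0,\mu_h)$ intertwines the Poisson suspension $G$-action with the Bernoulli shift. Consequently, the Bernoulli shift $G\actson\prod_{h\in G}(X_0,\mu_h)$ inherits nonsingularity, weak mixing (from stable ergodicity against an arbitrary ergodic pmp $G\actson Y$), and the prescribed associated flow for every diagonal with such $Y$.

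The main obstacle I foresee is the second paragraph: we must make each slice $\{h\}\times\N$ carry finite total mass so that the slicewise Poisson configurations are a.s.\ finitely supported; this is what lets us recognize the uncountable-looking Poisson base as the countable set $X_0$. This forces $|A_n|$ to grow rapidly relative to $\lambda_n e^{|a_n|}$, but rarefaction only strengthens the F{\o}lner property and is straightforwardly compatible with the mixing-along-$\Lambda\Lambda^{-1}$ and recurrence requirements from the proofs of Lemma \ref{lem.technical-suspension-lemma} and Proposition \ref{prop.recurrence-along-Poisson}. Once this is arranged, the remaining identification between the Poisson suspension and a Bernoulli shift over a countable base is a routine reorganization of the construction.
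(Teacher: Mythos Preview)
Your proposal is correct and follows essentially the same route as the paper: both reduce to a sequence $(\sigma_{\lambda_n,a_n})$ via Proposition~\ref{prop.standard-poisson}, apply Lemma~\ref{lem.technical-suspension-lemma} with $W=G$ and the counting measure, pass to a sparse F{\o}lner subsequence so that each slice $\{h\}\times\N$ carries finite total intensity (the paper imposes $\lambda_n|A_n|^{-1}(1+e^{a_n})\leq 2^{-n}$, you impose the equivalent $\sum_n \lambda_n|A_n|^{-1}e^{|a_n|}<\infty$), and then use the decomposition of the Poisson suspension over the discrete base $G\times\N$ to identify it with a Bernoulli shift over the countable set of finitely supported $\N$-valued sequences. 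The paper packages this identification as \eqref{eq.my-poisson-identification} and checks atomicity of $(X_0,\mu_e)$ via Borel--Cantelli, while you argue it directly from finite slice intensity; the content is the same.
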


For the periodic flows $\R \actson \R / \Z \log \lambda$, i.e.\ the type III$_\lambda$ case, Theorem \ref{thm:amenable Bernoulli flow} was proven independently in \cite{BV20} and \cite{KS20}. For the trivial flow, which corresponds to the type III$_1$ case, the result was already proven in \cite[Theorem 6.1]{BKV19}. The novelty lies in the type III$_0$ case, i.e.\ the properly ergodic flows.

As in the introduction, for any countable subgroup $\Lambda \subset \R$, we denote by $M_\Lambda$ the unique injective factor whose flow of weights is the almost periodic flow $\R \actson \Lambdah$. Combining Theorems \ref{thm.Poisson-vs-ITPFI-2} and \ref{thm.almost-periodic-Poisson} with Theorem \ref{thm:amenable Bernoulli flow}, we obtain the following immediate corollary.

\begin{corollary}\label{cor: amenable crossed product}
Let $G$ be a countable infinite amenable group. Any ITPFI$_2$ factor of type III, and in particular all the injective factors $M_\Lambda$ where $\Lambda \subset \R$ is a countable subgroup, are isomorphic to the crossed product $L^{\infty}(X)\rtimes G$ of a nonsingular Bernoulli action $G\curvearrowright (X,\mu)$ of $G$.
\end{corollary}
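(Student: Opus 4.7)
The plan is to read this corollary as an immediate combination of Theorems \ref{thm.Poisson-vs-ITPFI-2}, \ref{thm.almost-periodic-ITPFI-2} and \ref{thm:amenable Bernoulli flow}, together with the Connes--Haagerup--Krieger classification of injective factors by their flow of weights.

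First I would fix an ITPFI$_2$ factor $M$ of type III. By Theorem \ref{thm.Poisson-vs-ITPFI-2}, the flow of weights of $M$ is a Poisson flow of positive type $\R \actson (Z,\zeta)$, hence in particular a Poisson flow. Applying Theorem \ref{thm:amenable Bernoulli flow} to this flow and to the amenable group $G$, I obtain a countable base space $X_0$ and equivalent probability measures $(\mu_g)_{g \in G}$ such that the Bernoulli action $G \actson (X,\mu) = \prod_{g \in G}(X_0,\mu_g)$ is nonsingular, weakly mixing, and has associated flow isomorphic to $\R \actson (Z,\zeta)$.

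Next I would identify $L^\infty(X) \rtimes G$ with $M$. Since $M$ is of type III, $(X,\mu)$ must be nonatomic (otherwise the associated flow would be trivial or forced to be the flow of a semifinite factor), so by \cite[Lemma 2.2]{BKV19} the Bernoulli action is essentially free. Weak mixing gives ergodicity, amenability of $G$ combined with commutativity of $L^\infty(X)$ makes the crossed product an injective factor, and by construction its Connes--Takesaki flow of weights coincides with Krieger's associated flow of the action, namely $\R \actson (Z,\zeta)$. The classification of injective factors \cite{Con76, Haa85, Kri76} then forces $L^\infty(X) \rtimes G \cong M$.

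The particular case of $M_\Lambda$ for a countable subgroup $\Lambda \subset \R$ follows by invoking Theorem \ref{thm.almost-periodic-ITPFI-2}, which asserts that $M_\Lambda$ is an ITPFI$_2$ factor. Since $\R \actson \Lambdah$ is a compact almost periodic flow, it is never isomorphic to the translation flow $\R \actson \R$ that classifies semifinite factors, so $M_\Lambda$ is necessarily of type III and the previous argument applies verbatim. There is no genuine obstacle in the proof: all the real work is packaged inside the three cited theorems, and the only point that requires a line of verification is that the Bernoulli crossed product has the predicted flow of weights, which is immediate from the identification between the flow of weights of $L^\infty(X) \rtimes G$ and the associated flow of the essentially free ergodic action $G \actson (X,\mu)$ recalled in the preliminaries.
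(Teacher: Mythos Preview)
Your proposal is correct and follows essentially the same route as the paper, which simply states that the corollary is obtained by ``combining Theorems \ref{thm.Poisson-vs-ITPFI-2} and \ref{thm.almost-periodic-Poisson} with Theorem \ref{thm:amenable Bernoulli flow}''; you have merely spelled out the implicit step of invoking the Connes--Haagerup--Krieger classification to identify $L^\infty(X)\rtimes G$ with $M$, and you cite Theorem \ref{thm.almost-periodic-ITPFI-2} (itself a consequence of Theorems \ref{thm.almost-periodic-Poisson} and \ref{thm.Poisson-vs-ITPFI-2}) rather than \ref{thm.almost-periodic-Poisson} directly. One cosmetic point: your argument that $(X,\mu)$ is nonatomic can be shortened, since Theorem \ref{thm:amenable Bernoulli flow} produces a countable infinite base space $X_0$, so the product $\prod_{g\in G}(X_0,\mu_g)$ is automatically nonatomic and \cite[Lemma 2.2]{BKV19} applies immediately.
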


If $\gamma_0$ is a measure on $V_0 = G \times \N$ that is equivalent with the counting measure and if the nonsingular action $G \actson (V_0,\gamma_0) : g \cdot (h,n) = (gh,n)$ has a well defined Poisson suspension $G \actson (V,\eta)$, then $G \actson (V,\eta)$ is canonically isomorphic with the nonsingular Bernoulli action
\begin{equation}\label{eq.my-poisson-identification}
G \actson \prod_{g \in G} (X_0,\mu_g) \quad\text{with}\quad X_0 = (\N \cup \{0\})^\N \quad\text{and}\quad \mu_g = \prod_{n \in \N} \mu_{n,g} \, ,
\end{equation}
where $\mu_{n,g}$ is the Poisson distribution with intensity $\gamma_0(g,n)$.

Therefore, Theorem \ref{thm:amenable Bernoulli flow} can be deduced as follows from Lemma \ref{lem.technical-suspension-lemma}.

\begin{proof}[Proof of Theorem \ref{thm:amenable Bernoulli flow}]
By Proposition \ref{prop.standard-poisson} and using the notation in \eqref{eq:Poisson dist}, we write $\R \actson Z$ as the tail boundary flow of a sequence $\sigma_{\lambda_n,a_n}$ with $\lambda_n > 0$ and $a_n \in \R \setminus \{0\}$.

Denote by $\rho$ the counting measure on $G$. Since $G$ is amenable, the translation action $G \actson (G,\rho)$ admits a F{\o}lner sequence $A_n \subset G$, which we may choose in such a way that $|A_n|$ is an increasing unbounded sequence. Replacing $A_n$ by a subsequence, we may further assume that
\begin{equation}\label{eq.atomic-condition}
\lambda_n \, |A_n|^{-1} \, (1+e^{a_n}) \leq 2^{-n}
\end{equation}
for all $n \in \N$. Define $V_0 = G \times \N$ and define the measure $\gamma_0$ on $V_0$ by
$$\gamma_0(g,n) = \begin{cases} \lambda_n \, |A_n|^{-1} &\;\;\text{if $g \in A_n$,}\\ \lambda_n \, |A_n|^{-1} \, e^{a_n} &\;\;\text{if $g \not\in A_n$.}\end{cases}$$
After replacing once more $A_n$ by a subsequence, it follows from Lemma \ref{lem.technical-suspension-lemma} that the Poisson suspension $G \actson (X,\mu)$ of $G \actson (V_0,\gamma_0)$ is well defined, weakly mixing and has the property that for every ergodic pmp action $G \actson (Y,\eta)$, the associated flow of the diagonal action $G \actson X \times Y$ is isomorphic with $\R \actson Z$. Note that \eqref{eq.atomic-condition} remains valid in this passage to a subsequence.

By \eqref{eq.my-poisson-identification}, we may view $G \actson (X,\mu)$ as a nonsingular Bernoulli action. Since by \eqref{eq.atomic-condition}
$$\mu_{n,e}(0) = \exp(-\gamma_0(e,n)) \geq 1-\gamma_0(e,n) \geq 1-2^{-n} \; ,$$
the base space $(X_0,\mu_e)$ of the nonsingular Bernoulli action $G \actson (X,\mu)$ is atomic, so that Theorem \ref{thm:amenable Bernoulli flow} is proven.
\end{proof}

\begin{remark}
Note that although our proof of Theorem \ref{thm:amenable Bernoulli flow} makes use of Lemma \ref{lem.technical-suspension-lemma}, the proof actually does not rely in an essential way on the theory of Poisson suspensions. It follows from \eqref{eq.my-poisson-identification} that the nonsingular Bernoulli action $G \actson (X,\mu)$ that we construct in the proof of Theorem \ref{thm:amenable Bernoulli flow} is an infinite product action $G \actson \prod_{n \in \N} (X_n,\mu_n)$, where for each $n$, there is a natural $G$-invariant probability measure $\nu_n \sim \mu_n$ such that $G \actson (X_n,\nu_n)$ is a pmp Bernoulli action. The argument in the proof of Lemma \ref{lem.technical-suspension-lemma} then provides the identification of the associated flow as the correct tail boundary flow.
\end{remark}

%{\it Competing interests: the authors declare none.}

\end{document}